\newcommand {\Hom}{\operatorname {Hom}}
\newcommand {\im}{\operatorname {Im}}
\newcommand {\id}{\operatorname {id}}
\newcommand {\der}{\operatorname {Der}}
\newcommand {\sgn}{\operatorname {sgn}}
\newcommand {\PH}{\operatorname {HP}}
\newcommand {\vol}{\operatorname {vol}}
\newcommand {\de}{\operatorname {d}}
\newcommand {\Der}{\operatorname {Der}}
\begin{document}
\setlength{\baselineskip}{1.4em}
\numberwithin{equation}{section}
\newtheorem{thm}{Theorem}[section]
\newtheorem{prop}[thm]{Proposition}
\newtheorem{lem}[thm]{Lemma}
\newtheorem{cor}[thm]{Corollary}
\theoremstyle{definition}
\newtheorem{defn}[thm]{Definition}
\newtheorem{rmk}[thm]{Remark}
\newtheorem{exam}[thm]{Example}

\title[Poincar\'{e} duality and BV structure]
{Poincar\'{e} duality for
smooth Poisson algebras and BV structure on Poisson Cohomology}
\author{J.~Luo}
\address{Mathematics and Science College, Shanghai Normal University, Shanghai 200234, China}
\email{luojuan@shnu.edu.cn}
\author{S.-Q.~Wang}
\address{School of Mathematics, East China University of Science and Technology, Shanghai 200237, China}
\email{sqwang@ecust.edu.cn}
\author{Q.-S.~Wu}
\address{School of Mathematical Sciences, Fudan University, Shanghai 200433, China}
\email{qswu@fudan.edu.cn}

\begin{abstract} Similar to the modular vector fields in Poisson geometry, modular derivations are defined for smooth Poisson
algebras with trivial canonical bundle. By twisting Poisson module with the modular derivation, the Poisson cochain complex with values in any Poisson module is proved to be isomorphic
to the Poisson chain complex with values in the corresponding twisted Poisson module. Then a version of twisted Poincar\'{e} duality is proved between the Poisson homologies and  cohomologies. Furthermore, a notion of pseudo-unimodular Poisson structure is defined. It is proved  that the Poisson cohomology as a Gerstenhaber algebra admits a Batalin-Vilkovisky operator inherited from some one of its Poisson cochain complex if and only if the Poisson
structure is pseudo-unimodular.
This generalizes the geometric version due to P. Xu.
The modular derivation and Batalin-Vilkovisky operator are also described by using the dual basis of the K\"{a}hler differential module.
\end{abstract}

\subjclass[2010]{Primary 17B63, 17B40, 17B55, 16E40}

\keywords{Poisson algebra, smooth algebra, modular derivation, Poincar\'{e} duality, Batalin-Vilkovisky algebra}

\maketitle

\setcounter{section}{-1}
\section{Introduction}

Poisson algebras and their (co)homology theory play an important role in the study of their deformation quantization algebras, and vice-versa. 
For example,
the Hochschild homology and cyclic homology of some noncommutative algebras, such as  3-dimensional graded Calabi-Yau algebras \cite{VdB, Mar, BP} and 4-dimensional Sklyanin algebra \cite{TaP1,TaP2}, have been calculated by using deformation theory of Poisson algebras and  Brylinski spectral sequence \cite{Bry}.
Dolgushev proved that the Van den Bergh duality holds for the deformation quantization of unimodular Poisson algebras \cite{Do}.  The third author of the current paper and Zhu considered the filtered deformations of Poisson algebras and proved that the deformation algebra  is Calabi-Yau if and only if the Poisson algebra  is unimodular under some mild assumptions \cite{WZ}. Chen-Chen-Eshmatov-Yang studied the Poincar\'{e} duality between the Poisson homology and cohomology of polynomial algebras with unimodular quadratic Poisson structures, and showed that Kontsevich's deformation quantization as well as Koszul duality preserve the corresponding Poincar\'{e} duality \cite{CCEY21}.

Going back to  the Poisson algebras side,
Launois-Richard  \cite{LR} and Zhu  \cite{Zhu}  proved a twisted Poincar\'{e} duality for polynomial Poisson algebras with quadratic and linear Poisson structures, respectively. In  \cite{LWW}, we generalized their results of twisted Poincar\'{e} duality to any polynomial Poisson algebra. The duality is achieved by twisting the Poisson module structure in a canonical way, which is constructed from the modular derivation \cite[Theorem 3.5]{LWW}.  Later,  L\"{u}-Wang-Zhuang recovered the duality for any affine Poisson algebra with free  K\"{a}hler differential module via its Poisson enveloping algebra \cite{LWZ20}.  Note that the twisted Poincar\'{e} duality reduces to the classical Poincar\'{e} duality  if the Poisson structure is unimodular  \cite{Xu,LWW}. 

In this paper, for any smooth Poisson algebra with trivial canonical bundle, we describe the modular derivations by using the dual basis of  its K\"{a}hler differential module (see Theorem \ref{modular der of smooth}),
and prove that a twisted Poincar\'{e} duality always holds for such kind of Poisson algebras (see Theorem \ref{main-theorem1}).  Comparing with Huebschmann's work \cite{Hue1}, where a  general duality theorem is proved in the setting of Lie-Rinehart algebras,  the duality established in this paper  is constructed from an explicit isomorphism between the Poisson cochain complex of the Poisson algebra $R$ with values in a Poisson module $M$ and
the Poisson chain complex of $R$ with values in the twisted  Poisson module $M_t$ (see Theorem \ref{comm diag1}).


Batalin-Vilkovisky (BV for short) algebras appear in the research of BV formalism, which is a universal and effective method in quantization of gauge field and plays an important role in quantum field theory and string theory \cite{Mne19}. The BV structures on the Hochschild (co)homology of noncommutative algebras have been considered by many researchers, such as  \cite{Gin, Tra, KoKr, LZZ}. For a Poisson algebra, the BV structure on the Poisson (co)homology also attracts many attentions, since the Poisson (co)homology is closely related to the Hochschild (co)homology of its deformation quantization algebra. For any smooth Poisson algebra with trivial canonical bundle, we prove that there is a natural BV algebra structure on its Poisson cochain complex (see Theorem  \ref{main thm-BV}), with the BV operator induced from its de Rham differential on the Poisson chain complex by using the twisted Poincar\'{e} duality given in Theorem \ref{comm diag1}. The BV operator on the Poisson cochain complex is described in Theorem \ref{Description-BV-Ooerator}.  If the Poisson structure is unimodular, then the Poisson cohomology has a BV algebra structure with the BV operator induced from the one on the Poisson cochain complex  (see Theorem \ref{BV for unim poly}).

In the last part of this paper we define a class of pseudo-unimodular  Poisson structures (see Definition~\ref{def of pseudo-unimodular}), which can be  viewed as a generalization of unimodular Poisson structures.
%
For such kind of Poisson structure, we prove that its Poisson cohomology still has a BV algebra structure. Similar results are proved in \cite{LWW1} for Frobenius Poisson algebras.  In fact, we prove that the Poisson cohomology admits a BV operator inherited from some one of its Poisson cochain complex if and only if the Poisson structure is pseudo-unimodular (see Theorem \ref{Pseudo-unimodular has BV} and Corollary \ref{main-cor}).

This paper is organized as follows.
In Section 1, we collect some facts about multi-derivations and higher differential forms on smooth algebras.
In Section 2, after recalling the definitions of Poisson (co)homology,
we prove Theorem \ref{modular der of smooth}, which describes the modular derivation for any smooth Poisson algebra with trivial canonical bundle.
In Section 3, we prove Theorems \ref{comm diag1} and \ref{main-theorem1},
which establish the twisted Poincar\'{e} duality  between the Poisson homologies and cohomologies for smooth Poisson algebras with trivial canonical bundle.
In Section 4, we study the BV algebra structure on the Poisson cochain complex 
and Poisson cohomology, and prove Theorems \ref{main thm-BV} and
\ref{BV for unim poly}.
In the last section, we introduce a notion of pseudo-unimodular Poisson algebras, and prove Theorem \ref{Pseudo-unimodular has BV} and Corollary \ref{main-cor}, which say that  the Poisson cohomology admits a BV operator inherited from its Poisson cochain complex if and only if the Poisson algebra is pseudo-unimodular.

\section{Preliminaries}\label{sec:def.prelimi}
In this section, we collect some necessary facts about multi-derivations, higher differential forms, and contraction maps.
Let $\mathbbm{k}$ be a field. All vector spaces and  algebras 
are over $\mathbbm{k}$. We refer to \cite{LPV} as the basic reference. 
\subsection{Derivations and K\"{a}hler differentials}
Let $R$ be a commutative algebra  and  $M$ be an $R$-module. Let $\Omega^1(R)$ be the module of K\"{a}hler differentials and
$\der(R,M)$ be the set of $\mathbbm{k}$-linear derivations from $R$
to $M$.
There is a canonical isomorphism of left $R$-modules
\begin{equation}\label{der-diff dual}
\Hom_R(\Omega^1(R), M) \to \der(R, M), f \mapsto f \de
\end{equation}with the inverse map $\xi \mapsto f_{\xi}$
where $f_{\xi}: \Omega^1(R) \to  M$ is the map $a\de\!b \mapsto
a\xi(b)$.  Sometimes,  $\der(R, M)$ is
identified with $\Hom_R(\Omega^1(R), M)$ by the isomorphism in \eqref{der-diff dual}. So,
when $\xi \in \der(R, M)$ is viewed as an element in
$\Hom_R(\Omega^1(R), M)$, $\xi(\de\!b)=\xi(b)$; and when $f \in
\Hom_R(\Omega^1(R), M)$ is viewed as an element in $\der(R, M)$,
$f(b)=f(\de\!b)$.
Set $\der(R)=\der(R,R)$.
\subsection{Multi-derivations and higher differential forms}
Let $\mathfrak{X}^p(M)$ be the set of all skew-symmetric $p$-fold
$\mathbbm{k}$-linear multi-derivations with values in $M$, that is,
$$\mathfrak{X}^p(M)=\{F \in \Hom_{\mathbbm{k}}(\wedge^p R, M) \mid F \, \textrm{is a derivation in each argument}\}.$$
Obviously, $\mathfrak{X}^0(M)=M$ and $\mathfrak{X}^1(M)=\der(R, M)$.  An element in $\mathfrak{X}^p(M)$ is called a $p$-fold {\it multi-derivation} form $R$ to $M$.
Set $\mathfrak{X}^*(M)=\oplus_{p\in \mathbb{N}}\mathfrak{X}^p(M)$.

Let $\Omega^p(R)=\wedge_R^p\Omega^1(R)$  be the $p$-th wedge product of the $R$-module
$\Omega^1(R)$ for $p\in \mathbb{N}$,  and $\Omega^{*}(R)=\oplus_{p\in \mathbb{N}}\Omega^{p}(R)$.
An element in $\Omega^p(R)$ is called a {\it K\"{a}hler $p$-form} of $R$. Then the following more general fact holds.

\begin{lem}\label{Xp=Hom-p}
Let $R$ be a commutative algebra, $M$ be an $R$-module. For any $p
\in \mathbb{N}$,
 \begin{equation}\label{chi-diff}
 \mathfrak{X}^p(M) \cong  \Hom_R(\Omega^p(R), M).
\end{equation}
\end{lem}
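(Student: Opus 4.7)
The plan is to prove the isomorphism by induction on $p$, taking \eqref{der-diff dual} as the base case $p=1$. The map in one direction is immediate: given $f \in \Hom_R(\Omega^p(R), M)$, set $\Phi(f)(a_1, \ldots, a_p) := f(\de a_1 \wedge \cdots \wedge \de a_p)$; this is $\mathbbm{k}$-multilinear, skew-symmetric (from the wedge), and a derivation in each argument (from the Leibniz rule for $\de$ combined with $R$-linearity of $f$). The work lies in constructing the inverse assignment $\Psi$.

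For the inductive step, suppose the lemma holds for $p$ and let $F \in \mathfrak{X}^{p+1}(M)$. For each $a \in R$, the partial evaluation $F_a := F(a, -, \ldots, -)$ is a $p$-fold multi-derivation, so by hypothesis it corresponds to an $R$-linear map $\phi_a \colon \Omega^p(R) \to M$. Since $F$ is a derivation in its first argument, the assignment $a \mapsto \phi_a$ satisfies $\phi_{ab} = a\phi_b + b\phi_a$, hence defines a $\mathbbm{k}$-derivation $R \to \Hom_R(\Omega^p(R), M)$, where the target carries the natural $R$-module structure $(r\phi)(\omega) = r\phi(\omega)$. Applying \eqref{der-diff dual} once more, together with the tensor--Hom adjunction, yields an $R$-linear map
$$g \colon \Omega^1(R) \otimes_R \Omega^p(R) \to M, \qquad \de a \otimes \omega \mapsto \phi_a(\omega).$$
It then remains to show that $g$ factors through the wedge-product surjection $\Omega^1(R) \otimes_R \Omega^p(R) \twoheadrightarrow \Omega^{p+1}(R)$, giving the required $\Psi(F) \in \Hom_R(\Omega^{p+1}(R), M)$.

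The main obstacle is this last factorization step. The kernel of the wedge map is generated, by a polarization argument using the skew-symmetry already present in $\Omega^p(R)$, by elements of the form $\omega \otimes (\omega \wedge \eta)$ with $\omega \in \Omega^1(R)$ and $\eta \in \Omega^{p-1}(R)$. On generators $\omega = \de a$ one has $g(\de a \otimes (\de a \wedge \de a_2 \wedge \cdots \wedge \de a_p)) = F(a, a, a_2, \ldots, a_p) = 0$ by skew-symmetry of $F$, and the $R$-bilinearity of $g$ extends this vanishing to all of the kernel. Once $\Psi(F) = f_F$ is constructed, verifying that $\Phi$ and $\Psi$ are mutually inverse reduces to evaluating both composites on decomposable elements $a_0\,\de a_1 \wedge \cdots \wedge \de a_p$ and on tuples $(a_1, \ldots, a_p)$ respectively, which is a routine check.
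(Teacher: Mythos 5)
Your proof is correct, and it takes a genuinely different (and more careful) route than the paper. The paper's proof simply writes down both maps on generators: $\psi(g)(a_1\wedge\cdots\wedge a_p)=g(\de a_1\wedge\cdots\wedge\de a_p)$ (your $\Phi$) and $\varphi(F)(a_0\de a_1\wedge\cdots\wedge\de a_p)=a_0F(a_1\wedge\cdots\wedge a_p)$ (your $\Psi$), and then asserts they are mutually inverse; it does not address the one nontrivial point, namely that the formula for $\varphi(F)$ is prescribed only on $R$-module generators of the quotient module $\Omega^p(R)$ and must be checked to respect the relations. Your induction on $p$ supplies exactly this missing well-definedness: partial evaluation reduces to the universal property \eqref{der-diff dual}, tensor--hom adjunction produces the map on $\Omega^1(R)\otimes_R\Omega^p(R)$, and the factorization through the wedge surjection is where the skew-symmetry of $F$ is actually used. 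Two small points worth making explicit in a final write-up: (i) the kernel of $\Omega^1(R)\otimes_R\Omega^p(R)\twoheadrightarrow\Omega^{p+1}(R)$ is $R$-spanned by $\omega\otimes(\omega\wedge\eta)$ for \emph{arbitrary} $\omega\in\Omega^1(R)$, so after expanding $\omega=\sum_i b_i\de a_i$ the cross terms require the symmetrized identity $F(a\wedge b\wedge\cdots)+F(b\wedge a\wedge\cdots)=0$, which does follow from $F(c\wedge c\wedge\cdots)=0$ by $\mathbbm{k}$-bilinearity (this is the polarization you allude to, but it belongs to the vanishing argument rather than to the description of the kernel); (ii) the diagonal case must also be checked for non-decomposable $\eta\in\Omega^{p-1}(R)$, which is immediate by linearity in $\eta$. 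Your approach costs more machinery but is self-contained and rigorous where the paper is terse; the paper's version is shorter but leaves the essential verification implicit.
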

\begin{proof}
  Let $\varphi: \mathfrak{X}^p(M) \to  \Hom_R(\Omega^p(R), M)$ be the map $F \mapsto \varphi(F): \Omega^p(R) \to M$
  such that,  for any $a_0\de\!a_1\wedge \de\!a_2 \wedge \cdots \wedge \de\!a_p \in \Omega^p(R),$
  $$\varphi(F)(a_0\de\!a_1\wedge \de\!a_2 \wedge \cdots \wedge \de\!a_p)= a_0F(a_1 \wedge a_2 \wedge \cdots \wedge a_p).$$

  Let $\psi:  \Hom_R(\Omega^p(R), M) \to \mathfrak{X}^p(M)$ be the map $g \mapsto \psi(g)$ such that, for any
  $a_1 \wedge a_2 \wedge \cdots \wedge a_p \in \wedge^p R$,
  $$ \psi(g)(a_1 \wedge a_2 \wedge \cdots \wedge a_p)=g(\de\!a_1 \wedge \de\!a_2 \wedge \cdots \wedge \de\!a_p).$$

Then $\varphi (\psi(g)) = g$ and $\psi (\varphi(F))=F$.
It follows that $\varphi$ is an isomorphism with the inverse $\psi$.
\end{proof}

\begin{defn}\label{F-wedge-G}
For any $F\in \mathfrak{X}^{p}(R)$ and $G\in \mathfrak{X}^{q}(M)$,  define a product
$F\wedge G \in \mathfrak{X}^{p+q}(M)$ as : for any $a_1, a_2, \cdots,  a_{p+q} \in R$,
\begin{align*}
&(F\wedge G)(a_1\wedge a_2\wedge \cdots \wedge a_{p+q})\\
= &\sum\limits_{\sigma\in S_{p,q}}
 \sgn(\sigma) F(a_{\sigma(1)}\wedge a_{\sigma(2)} \wedge \cdots \wedge a_{\sigma(p)}) G(a_{\sigma(p+1)}\wedge a_{\sigma(p+2)} \wedge \cdots \wedge a_{\sigma(p+q)}),
\end{align*}
 where $S_{p, q}$ denotes the set of all $(p,q)$-shuffles, which
 are the permutations $\sigma \in S_{p+q}$ such that $\sigma(1)< \cdots <
 \sigma(p)$ and $\sigma(p+1)< \cdots < \sigma(p+q)$.
\end{defn}
It is easy to check that:
\begin{prop} $(\mathfrak{X}^{*}(R), \wedge)$ is a graded commutative  $R$-algebra. $\mathfrak{X}^{*}(M)$ is a graded $\mathfrak{X}^{*}(R)$-module.
\end{prop}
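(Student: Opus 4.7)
The plan is to verify the axioms directly from the shuffle-based Definition \ref{F-wedge-G}. First I would check closure, i.e.\ that $F \wedge G \in \mathfrak{X}^{p+q}(M)$ whenever $F \in \mathfrak{X}^p(R)$ and $G \in \mathfrak{X}^q(M)$. Skew-symmetry under a transposition of two adjacent arguments splits into two cases: if both transposed indices land in the same block for a given $(p,q)$-shuffle $\sigma$, the sign change is absorbed by the skew-symmetry of $F$ or $G$; if the transposition crosses the block boundary, one exhibits an involution on the set of shuffles that pairs $\sigma$ with a shuffle of opposite sign, giving cancellation of the appropriate terms. The Leibniz rule in each argument $i$ is immediate because every shuffle places $i$ either in the $F$-block (where $F$ is a derivation) or in the $G$-block (where $G$ is a derivation), so the product rule passes through term by term.

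Next I would establish graded commutativity $F \wedge G = (-1)^{pq}\, G \wedge F$ for $F, G \in \mathfrak{X}^*(R)$ via the bijection $S_{p,q} \to S_{q,p}$ given by $\sigma \mapsto \sigma \circ \tau_{q,p}$, where $\tau_{q,p}$ is the block transposition of sign $(-1)^{pq}$; combined with the commutativity of $R$ this produces the desired sign. Associativity $(F \wedge G) \wedge H = F \wedge (G \wedge H)$ is obtained by expanding both sides as a sum indexed by $(p,q,r)$-shuffles and matching signs, using that the $(p+q,r)$- and $(p,q)$-shuffles compose to parametrize $(p,q,r)$-shuffles in exactly the same way as the $(p,q+r)$- and $(q,r)$-shuffles do. The unit is $1 \in R = \mathfrak{X}^0(R)$, since the only $(0,q)$-shuffle is the identity.

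For the module statement the same shuffle formula defines a product $\mathfrak{X}^p(R) \times \mathfrak{X}^q(M) \to \mathfrak{X}^{p+q}(M)$, and the mixed associativity $(F_1 \wedge F_2) \wedge G = F_1 \wedge (F_2 \wedge G)$ with $F_1, F_2 \in \mathfrak{X}^*(R)$ and $G \in \mathfrak{X}^*(M)$ is proved by the same $(p,q,r)$-shuffle calculation: it never multiplies two $M$-valued factors, so the computation goes through verbatim.

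The only real obstacle is combinatorial sign bookkeeping for associativity. An alternative I might prefer, which avoids any shuffle arithmetic, is to transport the product via the isomorphism of Lemma \ref{Xp=Hom-p}: the exterior algebra $\Omega^*(R) = \wedge_R^* \Omega^1(R)$ is a standard graded commutative $R$-algebra, hence $\Hom_R(\Omega^*(R), R)$ carries a dual graded commutative algebra structure and $\Hom_R(\Omega^*(R), M)$ becomes a graded module over it; the identification with the shuffle product is then checked on elementary forms $a_0 \de\! a_1 \wedge \cdots \wedge \de\! a_p$, which is again the same sign identity but packaged more cleanly.
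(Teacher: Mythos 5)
Your proposal is correct; the paper itself gives no argument here (it simply states "It is easy to check that:"), and your direct shuffle-based verification of closure, alternation, the Leibniz rule, graded commutativity via the block transposition, and associativity via composition of shuffles is exactly the standard check being alluded to. The only quibble is phrasing: in the cross-block case of the alternation argument the involution on shuffles matches terms with a sign flip (or cancels them when two adjacent arguments are set equal) rather than "cancelling" outright, but the mechanism you describe is the right one.
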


Let $\wedge^p_R\der(R)$ be the $p$-th wedge product of $\der(R)$.
There is a natural map 
\begin{equation}\label{map-alpha}
    \alpha: \wedge^p_R \der(R) \to
\mathfrak{X}^p(R), \, \xi_1 \wedge \xi_2 \wedge \cdots \wedge \xi_p
\mapsto\alpha(\xi_1 \wedge \xi_2\wedge \cdots \wedge \xi_p),
\end{equation} where $\alpha(\xi_1 \wedge \xi_2\wedge \cdots \wedge \xi_p)$
is the map $\wedge^p R \to R$,
$$a_1 \wedge a_2\wedge \cdots \wedge a_p \mapsto
\sum_{\sigma \in S_p} \sgn(\sigma) \xi_1(a_{\sigma(1)})
\xi_2(a_{\sigma(2)}) \cdots \xi_p(a_{\sigma(p)})
=\begin{vmatrix}
\xi_1(a_{1})&\cdots&\xi_1(a_{p})\\
 \vdots &  \ddots & \vdots  \\
\xi_p(a_{1})&\cdots&\xi_p(a_{p})\\
\end{vmatrix}.$$ It is easy to
check that $\alpha$ is well-defined. If $R$ is smooth affine, then $\alpha$
is an isomorphism as proved in the next subsection (see Corollary \ref{smooth duality}), which induces an algebra  isomorphism from the exterior algebra $E_R(\Der(R))$ to $(\mathfrak{X}^{*}(R), \wedge)$.

\subsection{Smooth affine algebras and multi-derivations}
Recall that an affine commutative algebra $R$ is {\it smooth} (over $\mathbbm{k}$) if it satisfies that: for any surjective morphism $\varepsilon: E \to A$  between commutative algebras $E$ and $A$ with $(\ker \varepsilon)^2 = 0$, and any morphism $f: R \to A$, there is a morphism $g: R \to E$ such that $\varepsilon g=f$. 
In fact, $R$ is smooth if and only if its global dimension is finite; if and only if  the projective dimension of $R$ as an $R$-bimodule
 is finite. If $R$ is a smooth affine algebra, then $\Omega^1(R)$ is a finitely generated projective $R$-module  \cite[9.3]{Weibel}.

When $\Omega^1(R)$ is finitely generated  $R$-projective, there
is a canonical isomorphism $$\Omega^1(R)\cong \Hom_R(\der(R), R)$$
following \eqref{der-diff dual} and the dual basis lemma for projective modules.
In fact, $\Omega^p(R)\cong \Hom_R(\wedge^p_R\der(R), R)$ holds for
any $p \in \mathbb{N}$ by the following lemma.
\begin{lem}\label{proj-wedge}
Let $R$ be a commutative algebra, $P$ be a finitely generated
projective $R$-module. Then, for any $p \in \mathbb{N}$, $\wedge^p_R
P$ is projective, and
\begin{equation} \label{proj-wedge-iso}
\wedge^p_R \Hom_R(P, R) \cong  \Hom_R(\wedge^p_R P, R).
\end{equation}
\end{lem}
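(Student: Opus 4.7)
The plan is to reduce everything to the case of a finitely generated free module, using the standard fact that a finitely generated projective $R$-module is a retract of some $R^n$. Fix a module $Q$ and an isomorphism $P \oplus Q \cong R^n$.

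For the projectivity of $\wedge^p_R P$, I would invoke the classical decomposition
\[
\wedge^p_R(P \oplus Q) \;\cong\; \bigoplus_{i+j=p} \wedge^i_R P \otimes_R \wedge^j_R Q,
\]
which exhibits $\wedge^p_R P$ (the $(p,0)$-summand) as a direct summand of the free module $\wedge^p_R R^n \cong R^{\binom{n}{p}}$; this immediately gives that $\wedge^p_R P$ is finitely generated projective, and in passing also that each $\wedge^i_R P$ and $\wedge^j_R Q$ is finitely generated projective.

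For the isomorphism \eqref{proj-wedge-iso}, I would construct the natural ``determinant'' map
\[
\mu_P:\ \wedge^p_R \Hom_R(P,R) \longrightarrow \Hom_R(\wedge^p_R P, R),
\]
sending $f_1 \wedge \cdots \wedge f_p$ to $x_1 \wedge \cdots \wedge x_p \mapsto \det\bigl(f_i(x_j)\bigr)$, verify that it is well defined ($R$-multilinear and alternating in both sets of arguments, by elementary properties of the determinant) and contravariantly functorial in $P$, and then check by a direct computation on the standard basis that $\mu_{R^n}$ is an isomorphism: if $e_1, \ldots, e_n$ denotes the standard basis of $R^n$ with dual basis $e_1^*, \ldots, e_n^*$, then $\mu_{R^n}$ sends $\{e_{i_1}^* \wedge \cdots \wedge e_{i_p}^*\}_{i_1 < \cdots < i_p}$ bijectively onto the basis of $\Hom_R(\wedge^p_R R^n, R)$ dual to $\{e_{j_1} \wedge \cdots \wedge e_{j_p}\}_{j_1 < \cdots < j_p}$.

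The final step, transferring the isomorphism from $R^n$ to $P$, is the main obstacle. Applying the exterior-power decomposition to both $P \oplus Q = R^n$ and $\Hom_R(P, R) \oplus \Hom_R(Q, R) = \Hom_R(R^n, R)$, one identifies $\mu_{R^n}$ with a direct sum indexed by $(i,j)$ with $i+j=p$ of maps $\mu_P^{(i)} \otimes \mu_Q^{(j)}$ composed with the natural map
\[
\Hom_R(\wedge^i_R P, R) \otimes_R \Hom_R(\wedge^j_R Q, R) \longrightarrow \Hom_R(\wedge^i_R P \otimes_R \wedge^j_R Q, R),
\]
which is an isomorphism because the relevant exterior powers are already known to be finitely generated projective. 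Since $\mu_{R^n}$ is an isomorphism, every summand is, and specializing to $(i,j) = (p,0)$ (where $\mu_Q^{(0)} = \id_R$) yields that $\mu_P$ is an isomorphism. Everything apart from this last bookkeeping with the direct-sum decomposition is routine.
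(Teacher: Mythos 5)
Your proof is correct, but it takes a genuinely different route from the paper's. The paper works directly with a dual basis $\{x_i,x_i^*\}_{i=1}^r$ for the projective module $P$: it observes that $\{x_{i_1}\wedge\cdots\wedge x_{i_p},\ \alpha'(x_{i_1}^*\wedge\cdots\wedge x_{i_p}^*)\}_{i_1<\cdots<i_p}$ is a dual basis for $\wedge^p_R P$ (which settles projectivity by the dual basis lemma), and then exhibits an explicit inverse $\beta'(f)=\sum_{i_1<\cdots<i_p} f(x_{i_1}\wedge\cdots\wedge x_{i_p})\,x_{i_1}^*\wedge\cdots\wedge x_{i_p}^*$ to the same determinant map you call $\mu_P$. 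You instead split $P\oplus Q\cong R^n$, use $\wedge^p_R(P\oplus Q)\cong\bigoplus_{i+j=p}\wedge^i_R P\otimes_R\wedge^j_R Q$ to get projectivity, check $\mu_{R^n}$ on the standard basis, and transfer the conclusion to the $(p,0)$-summand. Both arguments are complete: the block-diagonality of $\mu_{P\oplus Q}$ that your transfer step relies on does hold (the mixed blocks contribute determinants that vanish for rank reasons when the block sizes do not match), and the map $\Hom_R(A,R)\otimes_R\Hom_R(B,R)\to\Hom_R(A\otimes_R B,R)$ is indeed an isomorphism once $A$ and $B$ are known to be finitely generated projective. What the paper's more computational argument buys is an explicit dual basis for $\wedge^p_R P$ and an explicit formula for the inverse, both of which are reused later (e.g.\ in Corollary \ref{smooth duality} and in the dual-basis manipulations of Sections 2--4); your argument is conceptually cleaner but leaves the inverse implicit.
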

\begin{proof} Define $\alpha' : \wedge^p_R \Hom_R(P, R) \to
 \Hom_R(\wedge^p_R P, R), f_1 \wedge f_2\wedge \cdots \wedge f_p
\mapsto \alpha'(f_1 \wedge f_2\wedge \cdots \wedge f_p)$, which is
the map $\wedge^p_R P \to R$,
$$y_1 \wedge y_2\wedge \cdots \wedge y_p \mapsto \sum_{\sigma \in S_p} \sgn(\sigma) f_1(y_{\sigma(1)}) f_2(y_{\sigma(2)}) \cdots
f_p(y_{\sigma(p)})=\begin{vmatrix}
f_1(y_{1})&\cdots&f_1(y_{p})\\
 \vdots &  \ddots & \vdots  \\
f_p(y_{1})&\cdots&f_p(y_{p})\\
\end{vmatrix}.$$ It is easy to check that $\alpha'$ is
well-defined.

Let $\{x_i,x_i^*\}_{i=1}^r$ be a dual basis for the projective $R$-module $P$.  
Then $$\{x_{i_1} \wedge x_{i_2} \wedge \cdots \wedge x_{i_p},
\alpha'( x_{i_1}^* \wedge x_{i_2}^* \wedge \cdots \wedge
x_{i_p}^*)\}_{1 \leq i_1<i_2<\cdots<i_p \leq r}$$ is a dual basis
for the projective module $\wedge^p_R P$.

Define $\beta':  \Hom_R(\wedge^p_R P, R) \to \wedge^p_R \Hom_R(P,
R), $
$$f \mapsto \sum\limits_{1 \leq i_1<i_2<\cdots<i_p \leq r}f(x_{i_1}\wedge x_{i_2}
\wedge \cdots \wedge x_{i_p})x_{i_1}^* \wedge x_{i_2}^* \wedge \cdots \wedge
x_{i_p}^*.$$

Then $\alpha'$ and $\beta'$ are inverse to each other.
%
%
%
%
\end{proof}
\begin{cor}\label{smooth duality}
Let $R$ be a smooth algebra. Then the map $\alpha$ defined in \eqref{map-alpha} gives an isomorphism   $\wedge^p_R\der(R) \cong
\mathfrak{X}^p(R)$ for any $p\in \mathbb{N}$. 
Moreover,  the following diagram commutes:
$$
\xymatrix@=1.5cm{
   \wedge^p_R\der(R) \ar[d]_{\alpha}\ar[r]^(0.4){by\; \eqref{der-diff dual}}_(0.4){\cong}
        &   \wedge^p_R \Hom_R(\Omega^1(R), R)   \ar[d]^{by\; \eqref{proj-wedge-iso}}_{\cong}  \\
   \mathfrak{X}^p(R)     \ar[r]^(0.4){by\; \eqref{chi-diff}}_(0.4){\cong}
        & \Hom_R(\Omega^p(R), R).
     }
$$
%
\end{cor}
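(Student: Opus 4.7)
The plan is to deduce both assertions from the commutativity of the square: once commutativity is established, the fact that $\alpha$ is an isomorphism follows automatically, because the other three edges will already be known to be isomorphisms. The smoothness of $R$ enters at exactly one point, namely to ensure that $\Omega^1(R)$ is a finitely generated projective $R$-module so that Lemma \ref{proj-wedge} applies with $P=\Omega^1(R)$.

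First I would identify the three non-$\alpha$ sides of the diagram as isomorphisms. The top horizontal arrow is the $p$-th exterior power (over $R$) of the canonical isomorphism \eqref{der-diff dual}; the right vertical arrow is the isomorphism \eqref{proj-wedge-iso} of Lemma \ref{proj-wedge} applied to $\Omega^1(R)$; the bottom horizontal arrow is the isomorphism \eqref{chi-diff} of Lemma \ref{Xp=Hom-p}. All three are isomorphisms under our hypotheses.

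Next I would verify commutativity by a direct diagram chase on a wedge $\xi_1\wedge\cdots\wedge\xi_p$, testing the two resulting elements of $\Hom_R(\Omega^p(R),R)$ on a decomposable form $a_0\de a_1\wedge\cdots\wedge\de a_p$, since such forms generate $\Omega^p(R)$ as an $R$-module. Along the top-then-right path, $\xi_i$ is sent to $f_{\xi_i}$ with $f_{\xi_i}(\de a)=\xi_i(a)$, and the explicit formula for $\alpha'$ extracted from the proof of Lemma \ref{proj-wedge} gives
\[
\alpha'(f_{\xi_1}\wedge\cdots\wedge f_{\xi_p})(a_0\de a_1\wedge\cdots\wedge\de a_p)=a_0\det\bigl(\xi_i(a_j)\bigr).
\]
Along the left-then-bottom path, the definition \eqref{map-alpha} of $\alpha$ yields $\alpha(\xi_1\wedge\cdots\wedge\xi_p)(a_1\wedge\cdots\wedge a_p)=\det(\xi_i(a_j))$, and the isomorphism \eqref{chi-diff} then sends this multi-derivation to the $R$-linear map $a_0\de a_1\wedge\cdots\wedge\de a_p\mapsto a_0\det(\xi_i(a_j))$. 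The two composites therefore agree on generators, hence coincide as $R$-linear maps on $\Omega^p(R)$.

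There is essentially no obstacle beyond bookkeeping: the only nontrivial point is that $\alpha$ and $\alpha'$ are both defined by the same $p\times p$ determinant, so the two composite maps match as soon as conventions are aligned. The conclusion that $\alpha$ is an isomorphism for every $p\in\mathbb{N}$ then follows from the two-out-of-three property in the commutative square.
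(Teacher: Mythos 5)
Your proof is correct, and it is organized differently from the paper's. The paper's proof goes the other way around: it picks a dual basis $\{(\de x_i),(\de x_i)^*\}_{i=1}^r$ of the projective module $\Omega^1(R)$ and simply writes down an explicit candidate inverse, $F\mapsto\sum_{i_1<\cdots<i_p}F(x_{i_1}\wedge\cdots\wedge x_{i_p})(\de x_{i_1})^*\wedge\cdots\wedge(\de x_{i_p})^*$ (which is just the map $\beta'$ of Lemma \ref{proj-wedge} transported across the identifications \eqref{der-diff dual} and \eqref{chi-diff}), leaving both the verification that this is a two-sided inverse and the commutativity of the square implicit. You instead prove the commutativity first, by checking that both composites send $\xi_1\wedge\cdots\wedge\xi_p$ to $a_0\de a_1\wedge\cdots\wedge\de a_p\mapsto a_0\det(\xi_i(a_j))$, and then obtain invertibility of $\alpha$ formally from the fact that the other three edges are isomorphisms. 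Your route is slightly longer in the bookkeeping but more complete as a proof of the stated corollary, since the ``Moreover'' clause is actually part of the statement and the paper's proof never addresses it; the paper's route buys an explicit formula for $\alpha^{-1}$, which is reused later (e.g.\ in Theorem \ref{dual iso} and Theorem \ref{dagM and dagR}). Both arguments rest on the same two ingredients: smoothness giving that $\Omega^1(R)$ is finitely generated projective, and Lemma \ref{proj-wedge}.
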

\begin{proof} Since $R$ is smooth, $\Omega^1(R)$ is a finitely
generated projective $R$-module
.

Suppose $\{(\de\!x_i), (\de\!x_i)^{*}\}_{i=1}^r$ is a dual basis for the projective module
$\Omega^1(R)$. Then the inverse of $\alpha$ is $\mathfrak{X}^p(R)
\to \wedge^p_R\der(R)$, which sends $F$ to

$\sum\limits_{1 \leq
i_1<i_2<\cdots<i_p \leq r} F(x_{i_1}\wedge x_{i_2}
\cdots \wedge x_{i_p})(\de\!x_{i_1})^* \wedge (\de\!x_{i_2})^*
\wedge \cdots \wedge (\de\!x_{i_p})^*$.
\end{proof}

\begin{rmk}
 In Corollary \ref{smooth duality}, the condition ``$R$ is  smooth" can be reduced to that ``$R$ is  commutative  with $\Omega^1(R)$ being a finitely
generated projective $R$-module".
\end{rmk}

\begin{defn} 
Suppose $R$ is a smooth affine algebra and $n\in \mathbb{N}$. We say
$R$ is  smooth of dimension $n$ if $n=\sup\{i\in\mathbb{N}\mid\Omega^i(R)\neq 0\}$. In this case, $n$ is called the {\it smooth dimension} of $R$, and $\Omega^n(R)$ is called the {\it canonical bundle} of $R$.
Moreover, if $\Omega^n(R)\cong R$ (as $R$-modules), then we say the canonical bundle of $R$ is trivial.
\end{defn}

\subsection{Contraction maps}
Let $R$ be a commutative algebra and $M$ be a right $R$-module.

\begin{defn}\label{ctr0}
For any $F\in \mathfrak{X}^{p}(M)$,  the {\it contraction map}
$\iota_{F}: \Omega^{q}(R)\rightarrow M\otimes_R \Omega^{q-p}(R)$  is defined as: when $q < p, \iota_{F}=
0$; when $q\geq p$ and $\omega =a_0\de\! a_1\wedge \de\!
a_2\wedge\cdots\wedge\de\! a_q \in \Omega^{q}(R)$,
$$
\iota_{F}(\omega)= \sum\limits_{\sigma\in S_{p,q-p}}
 \sgn(\sigma) F(a_{\sigma(1)}\wedge a_{\sigma(2)} \wedge \cdots \wedge a_{\sigma(p)})a_0\otimes
 \de\! a_{\sigma(p+1)}\wedge\cdots\wedge\de\! a_{\sigma(q)}.$$
\end{defn}

\begin{rmk}\label{ctr1}
If $M=R$, then for any $F\in \mathfrak{X}^{p}(R)$,
$\iota_{F}: \Omega^{*}(R)\rightarrow \Omega^{*}(R)$ is a graded
$R$-linear map of degree $-p$.
For $F=a\in R = \mathfrak{X}^{0}(R)$, the contraction map
should  be understood as $\iota_{F}(\omega) = a \omega$.
\end{rmk}
\begin{rmk}
Note that for any $F\in \mathfrak{X}^{p}(R)$, the contraction map
$\iota_{F}: \Omega^{*}(R)\rightarrow \Omega^{*}(R)$ is an $R$-module morphism. 
So the map $\id_M\otimes_R \iota_{F}: M\otimes_R \Omega^{q}(R)\rightarrow M\otimes_R \Omega^{q-p}(R)$ is well-defined. Sometimes we also denote $\id_M\otimes_R \iota_{F}$ by $\iota_{F}$ and call it the contraction map induced by $F$ : for any $m\otimes \de\! a_1\wedge \de\!
a_2\wedge\cdots\wedge\de\! a_q \in M\otimes_R \Omega^{q}(R)$,
\begin{align*}
&\iota_{F}(m\otimes \de\! a_1\wedge \de\!
a_2\wedge\cdots\wedge\de\! a_q)\\
= &\sum\limits_{\sigma\in S_{p,q-p}}
 \sgn(\sigma)m F(a_{\sigma(1)}\wedge a_{\sigma(2)} \wedge \cdots \wedge a_{\sigma(p)})\otimes
 \de\! a_{\sigma(p+1)}\wedge\cdots\wedge\de\! a_{\sigma(q)}.
\end{align*}
\end{rmk}

\begin{prop}\label{ctr com}
For any $F\in \mathfrak{X}^{p_1}(M)$ and  $G\in \mathfrak{X}^{p_2}(R)$,
$$\iota_F  \iota_G=(-1)^{p_1p_2}\iota_G  \iota_F : \Omega^{q}(R)\rightarrow M\otimes_R \Omega^{q-p_1-p_2}(R).$$
\end{prop}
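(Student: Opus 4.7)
The plan is to verify the identity directly on generating forms $\omega = a_0\,\de a_1 \wedge \cdots \wedge \de a_q \in \Omega^q(R)$, since both compositions are $\mathbbm{k}$-linear. Expanding $\iota_F \iota_G(\omega)$ via Definition~\ref{ctr0} and the extended contraction formula in the remark for $\iota_F$ on $M \otimes_R \Omega^{*}(R)$, each term is indexed by a pair of disjoint subsets $J, K \subset I := \{1, \ldots, q\}$ with $|J| = p_2$ and $|K| = p_1$, and the summand is a sign times
\[
F(a_K)\,a_0\,G(a_J) \otimes \de a_{I \setminus (J\cup K)}
\]
(where for $L = \{l_1 < \cdots < l_s\}$, $a_L$ denotes $a_{l_1}\wedge\cdots\wedge a_{l_s}$, and similarly for $\de a_L$). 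An analogous expansion of $\iota_G \iota_F(\omega)$ yields a sum over the same index set with the same summand, because $M$ is a right $R$-module and $G$ takes values in $R$, so the scalars $a_0$ and $G(a_J)$ act on the same side of $F(a_K) \in M$ and commute by commutativity of $R$. The problem then reduces to comparing the two sign factors.

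The sign in $\iota_F \iota_G$ is $\sgn(J, I\setminus J)\,\sgn(K, (I\setminus J)\setminus K)$, while the one in $\iota_G \iota_F$ is $\sgn(K, I\setminus K)\,\sgn(J, (I\setminus K)\setminus J)$. I would interpret each of these nested shuffle products as the sign of a single permutation of $I$ that re-orders its elements into three increasing blocks: namely $(J, K, I\setminus(J\cup K))$ in the first case and $(K, J, I\setminus(J\cup K))$ in the second. These two permutations differ by interchanging the first two blocks, which moves each of the $p_1$ elements of $K$ past each of the $p_2$ elements of $J$, and hence their signs differ by the standard block-swap factor $(-1)^{p_1 p_2}$. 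This gives the claimed identity term by term.

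The main obstacle is not conceptual but bookkeeping: tracking the nested shuffle signs and confirming that the two expansions are indexed by the same disjoint pairs $(J, K)$. The block-swap interpretation above is precisely what makes the sign count transparent, so I would carry out the combinatorics in that form. One minor check worth flagging is that forming $\iota_G \iota_F$ requires the extended contraction $\id_M \otimes_R \iota_G$ from the remark, whose formula is what produces the coefficient $F(a_K)\,a_0\,G(a_J)$ in the same order inside $M$ in both expansions; without this identification, the comparison of coefficients would not be immediate.
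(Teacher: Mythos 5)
Your argument is correct: the decomposition of both compositions into sums indexed by disjoint pairs $(J,K)$, the observation that the coefficients agree because $G$ takes values in the commutative ring $R$ acting on the right of $M$, and the identification of the nested shuffle signs with the signs of the two block permutations $(J,K,\mathrm{rest})$ and $(K,J,\mathrm{rest})$, differing by the block-swap factor $(-1)^{p_1p_2}$, together give a complete proof. The paper disposes of the statement in one line by factoring through the identity $\iota_F\,\iota_G=\iota_{G\wedge F}$ and then invoking graded commutativity of the wedge product of multi-derivations ($G\wedge F=(-1)^{p_1p_2}F\wedge G$, in the sense of the $\mathfrak{X}^{*}(R)$-module structure on $\mathfrak{X}^{*}(M)$); verifying that intermediate identity is exactly the nested-shuffle computation you carry out, so the two proofs are the same combinatorics packaged differently. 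What the paper's phrasing buys is reusability (the relation between contraction and $\wedge$ is used elsewhere, e.g.\ in Definition~\ref{ctr3} and Theorem~\ref{dual iso}); what your direct expansion buys is that the sign bookkeeping, including the point you rightly flag about needing $\id_M\otimes_R\iota_G$ to even form $\iota_G\,\iota_F$, is made fully explicit rather than hidden behind ``it is easy to check.''
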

\begin{proof}It is easy to check that
$$\iota_F  \iota_G=\iota_{G\wedge F}=(-1)^{pq}\iota_{F\wedge G}=(-1)^{pq}\iota_G  \iota_F.$$
\end{proof}
\begin{defn}\label{ctr2}
For any $\omega \in \Omega^{p}(R)$, the {\it contraction map} $\iota_{\omega}:
\mathfrak{X}^{*}(R)\rightarrow \mathfrak{X}^{*}(R)$ is a graded
$R$-linear map of degree $-p$, which is defined as $\iota_{\omega}:
\mathfrak{X}^{q}(R)\rightarrow \mathfrak{X}^{q-p}(R)$: when $q < p,
\iota_{\omega}= 0$; when $q\geq p$ and $F \in \mathfrak{X}^{q}(R)$,
$$(\iota_{\omega} F) ( a_1 \wedge a_2 \wedge \cdots \wedge a_{q-p})
=F(\de\! a_1\wedge \de\! a_2\wedge\cdots\wedge\de\! a_{q-p} \wedge \omega
),$$ where the action is viewed by identifying $\mathfrak{X}^{q}(R)$ with
$\Hom_R(\Omega^q(R), R)$ via \eqref{chi-diff}.
\end{defn}

\begin{prop} \cite[Proposition 3.4(3)]{LPV}\label{ctr-equality} 
Let $R$ be a commutative algebra and $F\in \mathfrak{X}^{p}(R)$. For
any $a \in R$ and $\omega\in \Omega^q(R)$,
\begin{equation} \label{ctr-equality-no}
\iota_F(\omega \wedge \de\! a)= \iota_{F}(\omega)\wedge \de\! a +
 (-1)^{q-p+1}\iota_{\iota_{\de\!a}(F)}(\omega).
\end{equation}

\end{prop}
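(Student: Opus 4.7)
The plan is to reduce the identity to a direct combinatorial check based on the shuffle-sum definition (Definition \ref{ctr0}).

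Since both sides of \eqref{ctr-equality-no} are $R$-linear in $\omega$ and $\Omega^q(R)$ is generated (as an $R$-module) by elements of the form $a_0\,\de a_1\wedge\cdots\wedge\de a_q$, it suffices to verify the formula for a generator $\omega = a_0\,\de a_1\wedge\cdots\wedge\de a_q$. Setting $a_{q+1}:=a$, the left-hand side becomes $\iota_F(a_0\,\de a_1\wedge\cdots\wedge\de a_{q+1})$, which expands by definition as
\[
\sum_{\sigma\in S_{p,q+1-p}}\sgn(\sigma)\,F(a_{\sigma(1)}\wedge\cdots\wedge a_{\sigma(p)})\,a_0\otimes \de a_{\sigma(p+1)}\wedge\cdots\wedge\de a_{\sigma(q+1)}.
\]

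Next I would partition these shuffles according to the position of the maximal index $q+1$. Because a $(p,q+1-p)$-shuffle is strictly increasing on $\{1,\dots,p\}$ and on $\{p+1,\dots,q+1\}$ separately, the value $q+1$ must equal either $\sigma(q+1)$ or $\sigma(p)$. The shuffles with $\sigma(q+1)=q+1$ are in sign-preserving bijection with $(p,q-p)$-shuffles of $\{1,\dots,q\}$, and the corresponding subsum is exactly $\iota_F(\omega)\wedge \de a$, where the trailing $\de a$ comes from the last factor $\de a_{q+1}$.

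For the remaining case $\sigma(p)=q+1$, there is a bijection with $(p-1,q-p+1)$-shuffles $\tau$ of $\{1,\dots,q\}$ obtained by removing the entry $q+1$ sitting in position $p$; the sign changes by $(-1)^{q+1-p}$, since moving the entry $q+1$ from position $p$ to position $q+1$ uses that many adjacent transpositions. Via the identification \eqref{chi-diff} and Definition \ref{ctr2}, we have $F(a_{\tau(1)}\wedge\cdots\wedge a_{\tau(p-1)}\wedge a)=(\iota_{\de a}F)(a_{\tau(1)}\wedge\cdots\wedge a_{\tau(p-1)})$, and $\iota_{\de a}(F)\in\mathfrak{X}^{p-1}(R)$. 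The second subsum then collapses, by Definition \ref{ctr0} applied to $\iota_{\de a}(F)$, into $(-1)^{q-p+1}\iota_{\iota_{\de a}(F)}(\omega)$. Adding the two cases yields the identity.

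The main (and essentially only) obstacle is purely bookkeeping: tracking the sign $(-1)^{q+1-p}$ in the second case and verifying that the deletion of the entry $q+1$ really gives the announced sign-weighted bijection between $(p,q+1-p)$-shuffles with $\sigma(p)=q+1$ and $(p-1,q-p+1)$-shuffles of $\{1,\dots,q\}$. Once this is checked, the rest is a straightforward comparison of the two shuffle sums.
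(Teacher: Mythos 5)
Your argument is correct and complete. The paper itself offers no proof of this proposition (it is quoted from \cite[Proposition 3.4(3)]{LPV}), so your direct verification from Definition \ref{ctr0} is exactly what is needed. The key steps all check out: by $R$-linearity it suffices to treat $\omega=a_0\,\de a_1\wedge\cdots\wedge\de a_q$; a $(p,q+1-p)$-shuffle of $\{1,\dots,q+1\}$ must place the maximal index $q+1$ either at position $q+1$ (giving the sign-preserving bijection with $(p,q-p)$-shuffles and the term $\iota_F(\omega)\wedge\de a$) or at position $p$ (where deleting it costs $q+1-p$ adjacent transpositions, hence the factor $(-1)^{q-p+1}$, and $F(\,\cdot\,\wedge a)=(\iota_{\de a}F)(\,\cdot\,)$ by Definition \ref{ctr2} turns the remaining sum into $\iota_{\iota_{\de a}(F)}(\omega)$). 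The degenerate cases ($p=0$, or $q<p$) are consistent with the same decomposition, since the relevant shuffle sets are then empty.
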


\subsection{Contraction maps in smooth case}

Let $R$ be a smooth algebra (or a commutative algebra with $\Omega^1(R)$ being a finitely
generated projective $R$-module). Then, $\wedge^p_R\der(R)\cong
\mathfrak{X}^{p}(R)$ by Corollary \ref{smooth duality}, and
$\Omega^{p}(R)$ can be viewed as the dual module of
$\wedge^{p}_R\der(R)$, via
$$\Omega^p(R) \cong \wedge^p_R\Hom_R(\der(R), R) \cong \Hom_R(\wedge^p_R\der(R), R)\cong\Hom_R(\mathfrak{X}^p(R), R).$$

\begin{defn}\label{ctr3} Let $R$ be a smooth algebra. Then
for any multi-derivation $F\in \mathfrak{X}^{p}(R)$, one can define a
natural contraction operator
$$\iota_{F}: \Omega^{q}(R)\rightarrow \Omega^{q-p}(R),\, \omega \mapsto \iota_{F}(\omega),$$
with $\iota_{F}(\omega)$ given by $$\xi_{p+1} \wedge\xi_{p+2}
\wedge \cdots \wedge \xi_{q} \mapsto \omega(F\wedge\xi_{p+1} \wedge
\cdots \wedge \xi_{q})$$ for any $\xi_{p+1} \wedge \xi_{p+2} \wedge
\cdots \wedge\xi_{q} \in \mathfrak{X}^{q-p}(R)$, i.e., if $F=\xi_1\wedge\xi_2\wedge\cdots\wedge\xi_p,$
$$\iota_{F}(\omega)(\xi_{p+1} \wedge\xi_{p+2}
\wedge \cdots \wedge \xi_{q} )=\omega(\xi_1\wedge\cdots\wedge\xi_p\wedge\xi_{p+1} \wedge \cdots
\wedge \xi_{q}).$$
\end{defn}

\begin{prop}\label{ctr1=3}
The contraction map $\iota_F$ in Remark \ref{ctr1} is the same as in Definition \ref{ctr3} under the canonical isomorphism
$\Omega^q(R)\cong \Hom_{R}(\wedge^{q}_R\Der(R), R)$.
\end{prop}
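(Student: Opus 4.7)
The plan is to verify that the two descriptions agree on a convenient generating set, using $R$-bilinearity and then reducing the identification to a determinantal Laplace expansion. Both $\iota_F$ from Remark~\ref{ctr1} and $\iota_F$ from Definition~\ref{ctr3} are $R$-linear in $\omega$, and both depend $R$-multilinearly and alternatingly on $F$ when $F$ is written as a wedge of derivations (via the isomorphism $\alpha$ of Corollary~\ref{smooth duality}). Since $\Omega^q(R)$ is generated as an $R$-module by elements of the form $\omega = a_0\,\de a_1\wedge\cdots\wedge\de a_q$, and $\mathfrak{X}^p(R)\cong\wedge^p_R\Der(R)$ is generated by decomposable wedges $F=\xi_1\wedge\cdots\wedge\xi_p$, it suffices to check the equality of the two contractions on such elements, after pairing against an arbitrary decomposable test element $\xi_{p+1}\wedge\cdots\wedge\xi_q\in\mathfrak{X}^{q-p}(R)$.

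First I would spell out what Definition~\ref{ctr3} yields. Under the composite isomorphism $\Omega^q(R)\cong \Hom_R(\mathfrak{X}^q(R),R)$ used in Corollary~\ref{smooth duality}, the form $\omega=a_0\,\de a_1\wedge\cdots\wedge\de a_q$ corresponds to the functional $\xi_1'\wedge\cdots\wedge\xi_q'\mapsto a_0\det(\xi_i'(a_j))_{i,j=1}^q$. Hence, for $F=\xi_1\wedge\cdots\wedge\xi_p$, Definition~\ref{ctr3} gives
$$
\iota_F(\omega)(\xi_{p+1}\wedge\cdots\wedge\xi_q)\;=\;\omega(\xi_1\wedge\cdots\wedge\xi_q)\;=\;a_0\det\bigl(\xi_i(a_j)\bigr)_{i,j=1}^{q}.
$$

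Next I would compute the same pairing using the formula of Remark~\ref{ctr1} (which is the $M=R$ case of Definition~\ref{ctr0}). That formula produces
$$
\iota_F(\omega)\;=\;\sum_{\sigma\in S_{p,q-p}}\sgn(\sigma)\,a_0\,F(a_{\sigma(1)}\wedge\cdots\wedge a_{\sigma(p)})\,\de a_{\sigma(p+1)}\wedge\cdots\wedge\de a_{\sigma(q)}.
$$
Under $\alpha$, the scalar factor $F(a_{\sigma(1)}\wedge\cdots\wedge a_{\sigma(p)})$ is the $p\times p$ determinant $\det(\xi_i(a_{\sigma(j)}))_{i,j=1}^{p}$, and pairing the $(q-p)$-form $\de a_{\sigma(p+1)}\wedge\cdots\wedge\de a_{\sigma(q)}$ against $\xi_{p+1}\wedge\cdots\wedge\xi_q$ via the same duality produces the $(q-p)\times(q-p)$ determinant $\det(\xi_{p+k}(a_{\sigma(p+l)}))_{k,l=1}^{q-p}$. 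Thus the value of $\iota_F(\omega)$ on $\xi_{p+1}\wedge\cdots\wedge\xi_q$ is
$$
a_0\sum_{\sigma\in S_{p,q-p}}\sgn(\sigma)\det\bigl(\xi_i(a_{\sigma(j)})\bigr)_{i,j=1}^{p}\cdot\det\bigl(\xi_{p+k}(a_{\sigma(p+l)})\bigr)_{k,l=1}^{q-p}.
$$

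The final step is to recognize this expression as the Laplace cofactor expansion of the full $q\times q$ determinant $\det(\xi_i(a_j))_{i,j=1}^{q}$ along its first $p$ rows; this standard identity (summing minor-by-cominor products over column $(p,q-p)$-shuffles with shuffle signs) matches exactly the combinatorics of $S_{p,q-p}$ appearing above. Consequently both definitions give $a_0\det(\xi_i(a_j))_{i,j=1}^{q}$ and hence coincide. The only potential obstacle is sign bookkeeping: one must confirm that the shuffle sign convention used in Definition~\ref{ctr0}, the Leibniz sign appearing in $\alpha$ of Corollary~\ref{smooth duality}, and the Laplace expansion sign are compatible. Once this is verified on decomposable generators, the $R$-linearity reductions from the first paragraph promote the equality to all of $\mathfrak{X}^p(R)$ and $\Omega^q(R)$.
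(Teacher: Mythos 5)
Your proposal is correct and follows essentially the same route as the paper: evaluate both descriptions on decomposable $\omega=a_0\,\de a_1\wedge\cdots\wedge\de a_q$ and $F=\xi_1\wedge\cdots\wedge\xi_p$ against a decomposable test element, and identify the shuffle sum of minor-times-cominor products with the Laplace expansion of the full $q\times q$ determinant $a_0\det(\xi_i(a_j))$ along its first $p$ rows. The paper's proof is just a terser version of the same computation, so no further comment is needed.
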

\begin{proof}
Let $\omega=a_0\de\! a_1\wedge \de\!
a_2\wedge\cdots\wedge\de\! a_q \in \Omega^{q}(R)$, $q\geq p$; $F=\xi_1\wedge\xi_2\wedge\cdots\wedge\xi_p \in \mathfrak{X}^{p}(R)$ and $ \xi_{p+1} \wedge \xi_{p+2} \wedge \cdots \wedge\xi_{q} \in \mathfrak{X}^{q-p}(R)$. Then there are two ways to compute $\iota_{F}(\omega)$ according to
Definitions \ref{ctr0} and \ref{ctr3}, respectively.
The conclusion follows from the Laplace
expansion of the determinants.
\end{proof}
%
\begin{defn}\label{ctr4} Let $R$ be a commutative algebra with $\Omega^1(R)$ being finitely generated projective. Then
for any $\omega \in \Omega^{p}(R)$, one defines a natural contraction
operator  $\iota_{\omega}: \mathfrak{X}^{q}(R) \rightarrow
\mathfrak{X}^{q-p}(R)$, $\xi_1\wedge\xi_2\wedge\cdots\wedge\xi_q \mapsto $
$$
\sum\limits_{\sigma\in S_{q-p, p}} \sgn(\sigma) [(\xi_{\sigma(q-p+1)}\wedge \cdots \wedge \xi_{\sigma(q)})(\omega)]\, \xi_{\sigma
(1)} \wedge \cdots\wedge\xi_{\sigma (q-p)}.$$
\end{defn}

\begin{prop}The contraction map $\iota_{\omega}$ in Definition \ref{ctr2} is the same as in Definition
 \ref{ctr4}  under the canonical isomorphism
 $\mathfrak{X}^{q}(R) \cong \Hom_R(\Omega^q(R), R)$.
\end{prop}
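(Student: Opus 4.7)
The plan is to reduce to decomposable inputs on both sides and then identify the two resulting expressions as two Laplace expansions of the same determinant, paralleling the strategy already used in Proposition \ref{ctr1=3}.

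First, by $R$-linearity in $\omega$, it suffices to treat $\omega = \de b_1 \wedge \cdots \wedge \de b_p$, since the scalar $a_0$ in a general element $a_0\,\de b_1 \wedge \cdots \wedge \de b_p$ factors out of both operators without ambiguity. Next, the hypothesis that $\Omega^1(R)$ is finitely generated projective yields, by the remark following Corollary \ref{smooth duality}, an isomorphism $\alpha\colon \wedge^q_R \Der(R) \xrightarrow{\sim} \mathfrak{X}^q(R)$. Thus every $F \in \mathfrak{X}^q(R)$ is a finite sum of elements $\alpha(\xi_1 \wedge \cdots \wedge \xi_q)$, and by linearity I may assume $F$ is such a decomposable element.

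I then evaluate both candidate maps on an arbitrary $a_1 \wedge \cdots \wedge a_{q-p} \in \wedge^{q-p} R$. Under the identification $\mathfrak{X}^q(R) \cong \Hom_R(\Omega^q(R),R)$ from Lemma \ref{Xp=Hom-p}, the explicit formula \eqref{map-alpha} for $\alpha$ turns Definition \ref{ctr2} into
$$(\iota_\omega F)(a_1 \wedge \cdots \wedge a_{q-p}) = F\bigl(\de a_1 \wedge \cdots \wedge \de a_{q-p} \wedge \de b_1 \wedge \cdots \wedge \de b_p\bigr) = \det(M),$$
where $M$ is the $q \times q$ matrix whose $(i,j)$-entry is $\xi_i$ applied to the $j$-th entry of the concatenated list $(a_1,\ldots,a_{q-p},b_1,\ldots,b_p)$. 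Applying the same determinantal description of $\alpha$ twice, the prescription of Definition \ref{ctr4} unfolds into
$$\sum_{\sigma \in S_{q-p,p}} \sgn(\sigma)\,\det\bigl(\xi_{\sigma(i)}(a_j)\bigr)_{1 \leq i,j \leq q-p}\,\det\bigl(\xi_{\sigma(q-p+k)}(b_l)\bigr)_{1 \leq k,l \leq p},$$
which is exactly the Laplace expansion of $\det(M)$ along its last $p$ columns.

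The only delicate point is combinatorial bookkeeping: confirming that the sign $\sgn(\sigma)$ dictated by the $(q-p,p)$-shuffle convention coincides with the sign attached to the corresponding complementary minor in Laplace expansion. This is the same identity already invoked in Proposition \ref{ctr1=3} (with the roles of $\Omega^p(R)$ and $\mathfrak{X}^p(R)$ swapped), so the cleanest write-up either cites that observation or rederives it in a single line from the standard Laplace formula.
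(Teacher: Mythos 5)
Your proof is correct and follows essentially the same route as the paper, which simply refers to the argument of Proposition \ref{ctr1=3}: evaluate both definitions on decomposable $\omega$ and $F=\alpha(\xi_1\wedge\cdots\wedge\xi_q)$ and recognize the shuffle sum of Definition \ref{ctr4} as the Laplace expansion (along the last $p$ columns) of the $q\times q$ determinant produced by Definition \ref{ctr2}. The sign bookkeeping you flag does work out: for a $(q-p,p)$-shuffle landing the second block on the row set $S$, one has $\sgn(\sigma)=(-1)^{\sum_{i\in S}i+\sum_{j=q-p+1}^{q}j}$, which is exactly the Laplace sign.
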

\begin{proof}
The proof is similar to that of Proposition \ref{ctr1=3}.
\end{proof}

\section {Modular derivations of Smooth Poisson algebras}
In this section, we recall some materials on the (co)homology theory of Poisson algebras, and the definition of the modular derivation for smooth Poisson algebras with trivial canonical bundle. In the final part, we give a description of the modular derivation by using the dual basis of the  K\"{a}hler differential module, which is a finitely generated projective module. 
\subsection{Poisson algebras and Poisson modules}
\begin{defn} \cite{Lic, Wei0} A commutative $\mathbbm{k}$-algebra $R$ equipped with a
bilinear map $\{-, -\} : R \times R \to R$ is called a {\it Poisson
algebra} if
\begin{enumerate}
\item 
    $(R, \{-, -\})$ is a $\mathbbm{k}$-Lie algebra;
\item $\{-, -\} : R \times R \to R$  is a derivation in each argument with respect to the multiplication of $R$.
\end{enumerate}
We call such a bilinear map $\pi=\{-, -\} \in \mathfrak{X}^2(R)$ a Poisson structure over $R$.
\end{defn}
\begin{defn}\cite{Oh} A right {\it Poisson module} $M$ over Poisson algebra $R$ is a $\mathbbm{k}$-vector
space $M$ endowed with two bilinear maps $\cdot$ and $\{-, -\}_M : M
\times R \to M $ such that
 \begin{enumerate}
\item $(M,  \cdot)$ is a right module over the commutative algebra $R$;
\item $(M,  \{-, -\}_M)$ is a right Lie-module over the Lie algebra $(R, \{-, -\})$;
\item $\{xa, b\}_M = \{x, b\}_M a + x\{a, b\}$ for any $a, b \in R$ and $x \in M$;
\item $\{x, ab\}_M = \{x, a\}_M  b + \{x, b\}_M a$ for any $a, b \in R$ and $x \in M$.
\end{enumerate}
\end{defn}

Left Poisson modules are defined similarly. Any Poisson
algebra $R$ is naturally a right and left Poisson module over itself.

\subsection{Poisson homology and cohomology}
Let $M$ be a right Poisson module over the  Poisson algebra $R$.
There is a  canonical chain complex
\begin{equation}
 \label{poison-chain-complex}
\cdots \longrightarrow M \otimes_R
\Omega^{p}(R)\stackrel{\partial_p}{\longrightarrow} M \otimes_R
\Omega^{p-1}(R)\stackrel{\partial_{p-1}}{\longrightarrow}\cdots
\stackrel{\partial_2}{\longrightarrow}M \otimes_R \Omega^{1}(R)
\stackrel{\partial_1}{\longrightarrow} M  \to 0
\end{equation}
where $\partial_p \colon M \otimes_R \Omega^{p}(R)\longrightarrow M
\otimes_R \Omega^{p-1}(R)$ is defined as:
\begin{multline*}
 \partial_p(m \otimes \de\!a_1\wedge \cdots \wedge \de\!a_p)=\sum_{i=1}^p(-1)^{i-1}\{m, a_i\}_M\otimes
  \de\!a_1\wedge \cdots \widehat{\de\!a_i} \cdots \wedge \de\!a_p \\
  {}+\sum_{1\le i<j\le p}(-1)^{i+j}m \otimes \de \{a_i, a_j\}
  \wedge \de\!a_1\wedge \cdots  \widehat{\de\!a_i} \cdots  \widehat{\de\!a_j} \cdots \wedge
  \de\!a_p
\end{multline*}
(where $\widehat{\de\!a_i}$ means that $\de\!a_i$ is deleted).
\begin{defn} \cite{Mas}
The complex \eqref{poison-chain-complex} is called the {\it Poisson chain
complex} of $R$ with values in $M$, and its $p$-th homology is
 called the $p$-th {\it Poisson homology} of $R$ with values in
$M$, denoted by $\PH_p(R, M)$.
\end{defn}

In the case $M=R$, $\partial=[\iota_{\pi}, \de]$ where $\de$ is the de Rham differential, and the Poisson homology is the canonical homology given by Brylinski \cite{Bry}.


 There is also a canonical cochain complex
\begin{equation}
 \label{poison-cochain-complex}
0 \longrightarrow M \stackrel{\delta^0}{\longrightarrow} \mathfrak{X}^1(M)
\stackrel{\delta^1}{\longrightarrow} \cdots
\stackrel{\delta^{p-1}}{\longrightarrow} \mathfrak{X}^p(M)
\stackrel{\delta^p}{\longrightarrow} \mathfrak{X}^{p+1}(M) \longrightarrow
\cdots
\end{equation}
where $\delta^p \colon \mathfrak{X}^p(M)\longrightarrow \mathfrak{X}^{p+1}(M)$ is
defined as $F \mapsto \delta^p(F)$ with
\begin{multline*}
  \delta^p(F)(a_1\wedge \cdots \wedge a_{p+1})
  =\sum_{i=1}^{p+1}(-1)^{i}\{F(a_1 \wedge \cdots \widehat{a_i}\cdots \wedge a_{p+1}), a_i\}_M \\
  {}+\sum_{1\le i<j\le p+1}(-1)^{i+j}
  F(\{a_i, a_j\}\wedge a_1\wedge \cdots  \widehat{a_i} \cdots  \widehat{a_j} \cdots \wedge
  a_{p+1}).
\end{multline*}

\begin{defn} \cite{Lic, Hue}
The complex \eqref{poison-cochain-complex} is called the {\it
Poisson cochain complex} of $R$ with values in $M$, and its $p$-th
cohomology is called the $p$-th {\it Poisson cohomology} of $R$ with
values in $M$, denoted by $\PH^p(R, M)$.
\end{defn}

The elements in
$\ker \delta^1$ are called {\it Poisson derivations}, and the
elements in $\im \delta^0$  are called {\it Hamiltonian
derivations}, which are of the form $\{m, -\}_M$ for $m\in M$, denoted by $H_m$.

\begin{exam}
$\PH^0(R, M)=\{m \in M \mid \{m, a\}_M=0,
\forall\, a \in R\}$ is the set of Casimir elements in $M$;
$\PH^1(R, M)=\mbox{\{Poisson derivations\}}/\mbox{\{Hamiltonian
derivations\}}$.
\end{exam}


\subsection{Modular derivations and  Modular class}

\begin{defn}\label{modular4} Let $R$ be a smooth Poisson algebra of
dimension $n$ with trivial canonical bundle $\Omega^n(R)=R \vol$
where $\vol$ is a volume form. The {\it modular derivation} of $R$
with respect to $\vol$ is defined as the map $\phi_{\vol}: R \to R$
such that for any $a\in R$,
$$\phi_{\vol}(a)=\frac{\mathscr{L}_{H_a}(\vol)}{\vol},$$ where  $H_a=\{a,-\}: R \to R$ is the Hamiltonian derivation associated to $a$ and
$\mathscr{L}_{H_a}=[\de, \iota_{H_a}]$ is the Lie derivation. 
\end{defn}

In fact, the modular derivation $\phi_{\vol}$ is not only a derivation, but also a Poisson derivation.
When the volume form is changed, e.g., $\vol' = u \vol$ for some unit $u \in R$, then the corresponding Poisson
derivation $\phi_{\vol'} = \phi_{\vol} - u^{-1}H_u$, which is modified by a so called {\it log-Hamiltonian derivation} $u^{-1}\{-,u\}$ (see \cite{Do}).
The {\it modular class} of $R$ is defined as the class
$\phi_{\vol}$ modulo log-Hamiltonian derivations. If
the modular class is trivial, i.e., $\phi_{\vol}$ is a
log-Hamiltonian derivation, then $R$ is called {\it unimodular}.

\begin{exam}\cite{LWW} Let $R={\mathbbm{k}}[x_1, x_2,
\cdots, x_n]$ be a polynomial Poisson algebra with Poisson bracket
$\{-, -\}.$ Then  $\Omega^1(R)=\oplus_{i=1}^n R \de\! x_i$ and
$\Omega^n(R)=R \de\! x_1 \wedge \de\! x_2 \wedge \cdots \wedge \de\!
x_n$ with $\vol=\de\! x_1 \wedge \de\! x_2 \wedge
\cdots \wedge \de\! x_n$ as a volume form. The modular derivation $\phi_{\vol}$ is given by
 $$\phi_{\vol}(f)=\sum_{j=1}^n \frac{\partial \{f,
x_j\}}{\partial x_j}, \, \forall \,  f \in R.$$
\end{exam}

The following  is an example of smooth algebra with trivial canonical bundle.

\begin{exam}\label{ep-of -smoth}
Let $R=\mathbb{R}[x, y, z]/(x^2 + y^2 + z^2 -1)$. Then $R$ is smooth
of dimension $2$. In fact, it is well known that $$\Omega^1(R) \cong R\de\!x
\oplus R\de\!y \oplus R\de\!z/{R(x\de\!x, y\de\!y, z\de\!z)}$$   is a stably-free and non-free
projective $R$-module, which is generated  by three elements at least
(see \cite[11.2.3 and 15.3.15]{MR}). Furthermore, $$\Omega^2(R)=R(x\de\!y \wedge \de\!z +
y\de\!z \wedge \de\!x + z\de\!x \wedge \de\!y)$$ is a rank $1$ free $R$-module, with a basis element
$\eta=x\de\!y \wedge \de\!z + y\de\!z \wedge \de\!x + z\de\!x \wedge \de\!y$.
Note that $x\cdot \eta=\de\!y\wedge\de\!z, y\cdot \eta=\de\!z\wedge\de\!x$ and $z\cdot
\eta=\de\!x\wedge\de\!y.$

Since
$\de\!x \wedge \de\!y
\wedge \de\!z=(x^2 + y^2 +z^2)(\de\!x \wedge \de\!y \wedge \de\!z)=(x\de\!x+y\de\!y+z\de\!z)\wedge\eta=0,$
$\Omega^3(R)=R(\de\!x \wedge \de\!y \wedge \de\!z)=0.$ So, $R$ is smooth of dimension $2$ with trivial canonical bundle, and $\eta$ is a volume form.

Consider the exact sequence
$$0 \to R(x\de\!x, y\de\!y, z\de\!z) \to R\de\!x \oplus R\de\!y \oplus R\de\!z \to \Omega^1(R) \to 0.$$
It splits with the splitting maps $p: R\de\!x \oplus R\de\!y \oplus R\de\!z \to
R(x\de\!x, y\de\!y, z\de\!z)$:
$$(a\de\!x, b\de\!y, c\de\!z) \mapsto (ax+by+cz)(x\de\!x, y\de\!y, z\de\!z),$$
and $i: \Omega^1(R) \to  R\de\!x \oplus R\de\!y \oplus R\de\!z$:
\begin{align*}
& a\de\!x + b\de\!y + c\de\!z \mapsto (a\de\!x, b\de\!y, c\de\!z)-(ax+by+cz)(x\de\!x, y\de\!y,
 z\de\!z)\\
& = ((a-x(ax+by+cz))\de\!x, (b-y(ax+by+cz))\de\!y, (c-z(ax+by+cz))\de\!z).
\end{align*}

Then there is a dual basis $\{ \de\!x, \de\!y,
\de\!z; (\de\!x)^*,(\de\!y)^*, (\de\!z)^* \}$ for the projective module $\Omega^1(R),$ where
\begin{align*}
&(\de\!x) ^*: \Omega^1(R) \to R, a\de\!x + b\de\!y + c\de\!z \mapsto a-x(ax+by+cz),\\
&(\de\!y) ^*: \Omega^1(R) \to R, a\de\!x + b\de\!y + c\de\!z \mapsto b-y(ax+by+cz),\\
&(\de\!z) ^*: \Omega^1(R) \to R, a\de\!x + b\de\!y + c\de\!z \mapsto c-z(ax+by+cz).
\end{align*}
So we have $(\de\!x) ^*(\de\!x) + (\de\!y) ^*(\de\!y) + (\de\!z) ^*(\de\!z) = 1 -x^2 + 1
-y^2 + 1-z^2 =2.$ It agrees with the conclusion in Lemma \ref{dual-basis=n}.

It follows that $\der(R)$ is generated by $\{(\de\!x)^*,(\de\!y)^*,
(\de\!z)^*\}$. For any $f\in R$,
\begin{align*}
(\de\!x)^*(f)&=(\de\!x)^*(\frac{\partial f}{\partial x} \de\!x +
\frac{\partial f}{\partial y} \de\!y + \frac{\partial f}{\partial z}
\de\!z)
=(1-x^2)\frac{\partial f}{\partial x}  -xy \frac{\partial f}{\partial y} -xz \frac{\partial f}{\partial z},\\
(\de\!y)^*(f)&=(\de\!y)^*(\de\!f)= -xy \frac{\partial f}{\partial x} +(1-y^2) \frac{\partial f}{\partial y} -yz \frac{\partial f}{\partial z},\\
(\de\!z)^*(f)&=(\de\!z)^*(\de\!f)= -xz\frac{\partial f}{\partial x}  - yz \frac{\partial
f}{\partial y} +(1-z^2) \frac{\partial f}{\partial z}.
\end{align*}
\end{exam}
We will come back to this example at the end of this section.
\subsection{Description of the modular derivations}\label{mod-der-in-dual-basis} In this subsection,
we will describe  the modular derivation of a smooth Poisson algebra with trivial canonical bundle in Theorem \ref{modular der of smooth} by using the dual basis for $\Omega^1(R)$.

First, there is  a useful lemma.
\begin{lem} \label{contraction-formula} Let $R$ be a commutative algebra with
$\Omega^{n+1}(R)=0$, and $\eta$ be an $n$-form in $\Omega^n(R)$.
\begin{enumerate}
\item For any $a \in R$, and $F \in \mathfrak{X}^1(R)$, $F(a)\eta=\de\! a\wedge
\iota_{F}(\eta)$.
\item If further, $R$ is a Poisson algebra with Poisson structure
$\pi$, then, for any $a \in R$, $\iota_{H_a}(\eta)=-\de\! a\wedge
\iota_{\pi}(\eta)$.
\end{enumerate}
\end{lem}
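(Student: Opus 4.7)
The plan is to derive both formulas from Proposition \ref{ctr-equality} applied to the $(n+1)$-form $\eta \wedge \de\! a$, which must vanish because $\Omega^{n+1}(R) = 0$. Using the identity
\[
\iota_F(\omega \wedge \de\! a) = \iota_F(\omega)\wedge \de\! a + (-1)^{q-p+1}\iota_{\iota_{\de\!a}(F)}(\omega)
\]
with $\omega = \eta$ (so $q = n$), the left-hand side is zero, and rearranging the right-hand side will produce both identities after tracking a couple of signs coming from the graded commutativity of $\wedge$.

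For (1), take $F \in \mathfrak{X}^1(R)$, so $p = 1$. Since $F$ is a derivation, unpacking Definition \ref{ctr2} gives $\iota_{\de\!a}(F) = F(a) \in R = \mathfrak{X}^0(R)$, and by Remark \ref{ctr1} contraction by the scalar $F(a)$ is just multiplication, so $\iota_{F(a)}(\eta) = F(a)\eta$. The identity then reads $0 = \iota_F(\eta)\wedge \de\!a + (-1)^n F(a)\eta$, whence $\iota_F(\eta)\wedge \de\!a = (-1)^{n+1}F(a)\eta$. Moving $\de\!a$ past the $(n-1)$-form $\iota_F(\eta)$ picks up the sign $(-1)^{n-1}$, and the two signs combine to give $F(a)\eta = \de\!a \wedge \iota_F(\eta)$.

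For (2), take $F = \pi \in \mathfrak{X}^2(R)$, so $p = 2$. The key observation is that $\iota_{\de\!a}(\pi) = -H_a$ as a $1$-derivation, since Definition \ref{ctr2} gives $(\iota_{\de\!a}\pi)(b) = \pi(\de\!b \wedge \de\!a) = \{b, a\} = -H_a(b)$. Substituting produces $0 = \iota_\pi(\eta)\wedge \de\!a + (-1)^{n}\iota_{H_a}(\eta)$, hence $\iota_{H_a}(\eta) = (-1)^{n+1}\iota_\pi(\eta) \wedge \de\!a$. Moving $\de\!a$ past the $(n-2)$-form $\iota_\pi(\eta)$ contributes an additional $(-1)^n$, and the two signs combine to yield $\iota_{H_a}(\eta) = -\de\!a \wedge \iota_\pi(\eta)$.

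The main obstacle is purely bookkeeping: the content of the lemma is Proposition \ref{ctr-equality} combined with the vanishing $\Omega^{n+1}(R) = 0$, together with the correct identification of $\iota_{\de\!a}(F)$ in each case (a $0$-form in (1), the negative of the Hamiltonian derivation in (2)). Once these are in place, the two signs that appear cancel cleanly.
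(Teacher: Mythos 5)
Your proposal is correct and follows exactly the paper's own argument: apply Proposition \ref{ctr-equality} to $\omega=\eta$, use $\eta\wedge\de\!a\in\Omega^{n+1}(R)=0$, identify $\iota_{\de\!a}(F)=F(a)$ in (1) and $\iota_{\de\!a}(\pi)=-H_a$ in (2), and track the graded-commutativity signs. All the sign computations check out.
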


\begin{proof} (1) Since $\iota_{\de\! a}(F)=F(a)\in R$, $\iota_{\iota_{\de\! a}(F)}(\eta) =
\iota_{F(a)}(\eta)=F(a)\eta$. On the other hand, $\iota_{F}(\eta
\wedge \de\! a)=0$.  It follows from \eqref{ctr-equality-no} that
$(-1)^{n-1} F(a)\eta = \iota_{F}(\eta)\wedge \de\! a$ and
$F(a)\eta=\de\! a \wedge \iota_{F}(\eta)$.

(2) By taking $F=\pi \in \mathfrak{X}^2(R)$ in equation \eqref{ctr-equality-no}, $\iota_{\de\!
a}(\pi)=\{-,a\}=-H_a\in \mathfrak{X}^1(R)$ and $\iota_{\iota_{\de\!
a}(\pi)}(\eta) = - \iota_{H_a}(\eta)$. It follows from $\iota_{\pi}(
\eta \wedge \de\! a)=0$ and  \eqref{ctr-equality-no} that $\iota_{\pi}(\eta) \wedge \de\! a-(-1)^{n-1}
\iota_{H_a}(\eta)=0$. Hence,
$\iota_{H_a}(\eta)=-\de\! a\wedge \iota_{\pi}(\eta)$.
\end{proof}

In the following, let $R$ be a smooth algebra of dimension $n$ with
trivial canonical bundle and the Poisson structure $\{-, -\}$.  Let
 $\{(\de\!x_i), (\de\!x_i)^{*}\}_{i=1}^r$ be a dual basis for $\Omega^1(R)$,
$\vol \in \Omega^n(R) $ be a volume form and $\phi_{\vol}$ be the
modular derivation of $R$ with respect to $\vol$. Note that the number of generators $r$ may be larger than $n$.

Let $S = \{(I_1,I_2,\cdots ,I_n) \mid I_1,\cdots, I_n \mbox{ are integers and } 1\leq I_1<I_2<\cdots <I_n \leq r\}$. For any
$I=(I_1,I_2,\cdots ,I_n)\in S$, to simplify the notations, let $\de\!x_I$ denote
$\de\!x_{I_1}\wedge \de\!x_{I_2}\wedge \dots \wedge \de\!x_{I_n}$
and $\de\!x_I^*$ denote $(\de\!x_{I_1})^*\wedge
(\de\!x_{I_2})^*\wedge\cdots \wedge (\de\!x_{I_n})^*$.   Then $\{\de\!x_I, \de\!x_I^*\}_{I \in S}$ is
a dual basis for $\Omega^n(R)$ by Lemma \ref{proj-wedge}.
Let
\begin{equation}
\label{aI-bI} a_I=(\de\!x_I^*)(\vol) \quad \textrm{and} \quad
b_I=\vol^*(\de\!x_I).
\end{equation}
Then, by the dual basis lemma, in $\Omega^n(R)$,
\begin{equation}\label{aI-bI formula}
\vol=\sum_{I\in S}a_I\de\!x_I,\; \vol^*=\sum_{I\in
S}b_I(\de\!x_I^*),\; \de\!x_I=b_I\vol, \; \de\!x_I^*=a_I\vol^*.
\end{equation}
It is easy to see that $\sum_{I\in
S}a_Ib_I=1_R.$

In the case that $r > n$, things become more complicated because there is some $s \, (1 \leq s \leq r)$ and $I \in S$ such that
$s\neq I_j$ for all $1\leq j \leq n$. Sometimes we also say $s\notin I$ if $s\neq I_j$ for all $1\leq j \leq n$. Then we have the following lemmas.

\begin{lem}\label{change one}
For any $I=(I_1, I_2, \cdots , I_n)\in S$ and $s\notin I$,
$$\vol (\de\!x_I^*)(\de\! x_s)^*=\sum_{j=1}^n\vol(\de\!x^*_{I_j\rightarrow s})  (\de\!x_{I_j})^*,$$
where $\de\!x^*_{I_j\rightarrow s}$ means
$(\de\!x_{I_j})^*$ is changed to $ (\de\!x_s)^* $ in $\de\!x_I^* = (\de\!x_{I_1})^*\wedge
(\de\!x_{I_2})^*\wedge\cdots \wedge (\de\!x_{I_n})^*$.
\end{lem}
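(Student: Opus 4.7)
\medskip

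\noindent\textbf{Proof plan.} The identity is an equality in $\der(R)$: the left-hand side is to be parsed as $\vol(\de\!x_I^*)\cdot(\de\!x_s)^* = a_I (\de\!x_s)^*$, and the right-hand side is the sum $\sum_j \vol(\de\!x^*_{I_j\to s})\,(\de\!x_{I_j})^*$, both regarded as elements of $\der(R)$. My plan is to deduce the identity as the image, under contraction by $\vol$, of a single wedge of $n+1$ derivations that vanishes for dimensional reasons.

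The key input is that $R$ is smooth of dimension $n$, so $\Omega^{n+1}(R)=0$, and hence $\mathfrak{X}^{n+1}(R)\cong\Hom_R(\Omega^{n+1}(R),R)=0$ by Lemma \ref{Xp=Hom-p}. Via the isomorphism $\wedge^{n+1}_R\der(R)\cong\mathfrak{X}^{n+1}(R)$ of Corollary \ref{smooth duality}, every $(n+1)$-fold wedge of derivations must be zero; in particular,
\[
(\de\!x_{I_1})^*\wedge\cdots\wedge(\de\!x_{I_n})^*\wedge(\de\!x_s)^*\;=\;0\qquad\text{in }\mathfrak{X}^{n+1}(R).
\]

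Now apply the contraction operator $\iota_{\vol}\colon\mathfrak{X}^{n+1}(R)\to\mathfrak{X}^1(R)=\der(R)$ of Definition \ref{ctr4} to both sides. The image on the left is $0$; expanding the image on the right via the $(1,n)$-shuffle formula, the shuffle singling out position $j\in\{1,\ldots,n+1\}$ contributes $(-1)^{j-1}\bigl[(\xi_1\wedge\cdots\widehat{\xi_j}\cdots\wedge\xi_{n+1})(\vol)\bigr]\,\xi_j$, where $\xi_i=(\de\!x_{I_i})^*$ for $i\le n$ and $\xi_{n+1}=(\de\!x_s)^*$. For $j\le n$, moving $(\de\!x_s)^*$ past the $n-j$ factors on its left to recover the pattern $\de\!x^*_{I_j\to s}$ produces an extra sign $(-1)^{n-j}$, giving a total sign $(-1)^{n-1}$ in front of $\vol(\de\!x^*_{I_j\to s})\,(\de\!x_{I_j})^*$; the $j=n+1$ term contributes $(-1)^n a_I(\de\!x_s)^*$. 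Summing and setting equal to zero, then dividing by $(-1)^{n-1}$, yields $\sum_{j=1}^n \vol(\de\!x^*_{I_j\to s})(\de\!x_{I_j})^* = a_I(\de\!x_s)^*$, the claimed formula.

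The proof is essentially a sign-tracking exercise; the only substantive input is the vanishing $\mathfrak{X}^{n+1}(R)=0$, which is immediate from the smooth dimension being $n$, so I do not foresee a genuinely difficult step, only careful bookkeeping of shuffle signs.
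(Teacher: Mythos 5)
Your proof is correct and follows essentially the same route as the paper: both apply the contraction $\iota_{\vol}\colon\mathfrak{X}^{n+1}(R)\to\mathfrak{X}^{1}(R)$ to the wedge of the $n+1$ derivations $(\de\!x_{I_1})^*,\dots,(\de\!x_{I_n})^*,(\de\!x_s)^*$, expand by the $(1,n)$-shuffle formula of Definition \ref{ctr4}, and conclude from $\mathfrak{X}^{n+1}(R)=0$. The only (immaterial) difference is that the paper places $(\de\!x_s)^*$ in the first slot rather than the last, which shifts the intermediate signs but yields the same identity.
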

\begin{proof}
Consider the contraction map $\iota_{\vol}: \mathfrak{X}^{n+1}(R)
\to \mathfrak{X}^{1}(R)$ given by $\vol \in \Omega^n(R)$. Then, by
Definition \ref{ctr4},
\begin{align*}
&\iota_{\vol}\big((\de\!x_s)^*\wedge (\de\!x_{I_1})^*\wedge \cdots \wedge (\de\!x_{I_n})^*\big)\\
=&\vol \big((\de\!x_{I_1})^*\wedge \cdots \wedge (\de\!x_{I_n})^*\big)(\de\!x_s)^*\\
&+\sum_{j=1}^n(-1)^j\vol\big((\de\!x_s)^* \wedge (\de\!x_{I_1})^* \wedge \cdots \widehat{(\de\!x_{I_j})^*}
\cdots \wedge (\de\!x_{I_n})^*\big)(\de\!x_{I_j})^*.
\end{align*}
Since $\mathfrak{X}^{n+1}(R)=0$,
$$\vol (\de\!x_I^*)(\de\!x_s)^*=\sum_{j=1}^n(-1)^{j-1}\vol\big((\de\!x_s)^* \wedge (\de\!x_{I_1})^* \wedge \cdots \widehat{(\de\!x_{I_j})^*}
\cdots \wedge (\de\!x_{I_n})^*\big)(\de\!x_{I_j})^*.$$
That is $\vol
(\de\!x_I^*)\de\!x_s^*=\sum_{j=1}^n\vol(\de\!x^*_{I_j\rightarrow s})(\de\!x_{I_j})^*$ by the simplified notation.
\end{proof}
\begin{rmk} The equation in Lemma \ref{change one} still holds
when $\vol$ is changed to any $n$-form in $\Omega^n(R)$. And the condition $s\notin I$ can be removed.
\end{rmk}

\begin{lem}\label{change one duality}
For any $I=(I_1, I_2, \cdots , I_n)\in S$, $a\in R$ and $F\in
\mathfrak{X}^1(R)$,
$$F(a)\de\!x_I=\sum_{j=1}^n(-1)^{j-1}F(x_{I_j})
\de\!a\wedge \de\!x_{I_1}\wedge \de\!x_{I_2}\wedge \cdots
\widehat{\de\!x_{I_j}} \cdots \wedge \de\!x_{I_n}.$$
\end{lem}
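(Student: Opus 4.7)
The plan is to derive the identity from Lemma~\ref{contraction-formula}(1) applied to the $n$-form $\eta = \de\!x_I$, followed by a direct expansion of the contraction.

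First, I observe that $\de\!x_I = \de\!x_{I_1}\wedge\cdots\wedge\de\!x_{I_n}$ lies in $\Omega^n(R)$, and that $\Omega^{n+1}(R)=0$ by the smoothness of $R$ of dimension $n$ with trivial canonical bundle. Thus Lemma~\ref{contraction-formula}(1) is directly applicable to $\eta=\de\!x_I$, giving
\[
F(a)\,\de\!x_I \;=\; \de\!a \wedge \iota_F(\de\!x_I).
\]

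Next, since $F\in\mathfrak{X}^1(R)=\der(R)$, I would compute $\iota_F(\de\!x_I)$ using Definition~\ref{ctr0} with $p=1$, $q=n$, $a_0=1$. The relevant $(1,n-1)$-shuffles are the $n$ permutations $\sigma_j=(j,1,\ldots,\widehat{j},\ldots,n)$ for $j=1,\ldots,n$, each with sign $(-1)^{j-1}$. This yields
\[
\iota_F(\de\!x_I) \;=\; \sum_{j=1}^{n}(-1)^{j-1} F(x_{I_j})\,\de\!x_{I_1}\wedge\cdots\widehat{\de\!x_{I_j}}\cdots\wedge\de\!x_{I_n}.
\]

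Finally, substituting this expression into $F(a)\,\de\!x_I=\de\!a\wedge\iota_F(\de\!x_I)$ and bringing $\de\!a$ to the left of each summand gives exactly
\[
F(a)\,\de\!x_I \;=\; \sum_{j=1}^{n}(-1)^{j-1} F(x_{I_j})\,\de\!a\wedge\de\!x_{I_1}\wedge\cdots\widehat{\de\!x_{I_j}}\cdots\wedge\de\!x_{I_n},
\]
which is the desired identity. There is no serious obstacle here: the only point requiring care is tracking the shuffle signs in the contraction formula, but since $F$ is a single derivation, the $(1,n-1)$-shuffles are just cyclic insertions, and the sign computation is routine.
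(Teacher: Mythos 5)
Your proof is correct and follows exactly the paper's own argument: apply Lemma~\ref{contraction-formula}(1) to the $n$-form $\de\!x_I$ (valid since $\Omega^{n+1}(R)=0$) and then expand $\iota_F(\de\!x_I)$ via Definition~\ref{ctr0}. The sign bookkeeping for the $(1,n-1)$-shuffles is handled correctly.
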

\begin{proof}
By Lemma \ref{contraction-formula}(1), $F(a)\de\!x_I=\de\!a\wedge \iota_F(\de\!x_I)$.
Note that $$\iota_F(\de\!x_I)=\sum_{j=1}^n(-1)^{j-1}F(x_{I_j})\de\!x_{I_1}\wedge \de\!x_{I_2}\wedge \cdots \widehat{\de\!x_{I_j}} \cdots \wedge \de\!x_{I_n}.$$
Thus the equation holds.
\end{proof}

Now we give a description of the modular derivation of  $R$ with respect to
the volume form $\vol$.

\begin{thm}\label{modular der of smooth}
Let $R$ be a smooth algebra of dimension $n$ with
trivial canonical bundle $\Omega^n(R)=R\vol$ and a Poisson structure $\{-, -\}$.  Then, with the notations as above, the
modular derivation $\phi_{\vol}$ is given by $$\phi_{\vol}(a)=\sum_{1\leq s\leq
r}(\de\!x_{s})^*(\{a, x_{s}\})+\sum_{I\in S}\{a, a_I\}b_I, \quad \text{for any } a \in R, $$ where
$a_I$ and $b_I$ are defined in \eqref{aI-bI}.
\end{thm}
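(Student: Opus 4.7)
Plan: I would compute $\phi_{\vol}(a)\,\vol=\mathscr{L}_{H_a}(\vol)$ directly from the dual basis expansion $\vol=\sum_{I\in S}a_I\,\de x_I$ of $\Omega^n(R)$. Since $\Omega^{n+1}(R)=0$ forces $\de\vol=0$, Cartan's formula gives $\mathscr{L}_{H_a}(\vol)=\de\iota_{H_a}(\vol)$; equivalently, viewing $\mathscr{L}_{H_a}$ as a graded derivation on $\Omega^*(R)$ commuting with $\de$,
\[
\mathscr{L}_{H_a}(\vol)=\sum_I\{a,a_I\}\,\de x_I+\sum_I a_I\,\mathscr{L}_{H_a}(\de x_I).
\]
Since $\de x_I=b_I\vol$, the first summand equals $\bigl(\sum_I\{a,a_I\}b_I\bigr)\vol$, producing the second term of the theorem.

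For the remaining summand I would expand $\mathscr{L}_{H_a}(\de x_I)=\sum_{j=1}^n\de x_{I_1}\wedge\cdots\wedge \de\{a,x_{I_j}\}\wedge\cdots\wedge \de x_{I_n}$ and substitute $\de\{a,x_{I_j}\}=\sum_{s=1}^r(\de x_s)^*(\{a,x_{I_j}\})\,\de x_s$ via the dual basis of $\Omega^1(R)$. The inserted $\de x_s$ at position $j$ yields a nonzero $n$-form only in the diagonal case $s=I_j$ (contributing $\de x_I=b_I\vol$) or in the cross case $s\notin I$ (contributing $\epsilon_{j,s,I}\,b_{I^{(j,s)}_{\mathrm{sort}}}\vol$, where $\epsilon_{j,s,I}$ is the sign of the permutation sorting the replaced tuple). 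Summing the diagonal case over $(I,j)$ contributes $\bigl(\sum_I a_Ib_I\sum_{s\in I}(\de x_s)^*(\{a,x_s\})\bigr)\vol$.

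For the cross contribution I would use the bijection between triples $(I,j,s)$ with $s\notin I$ and triples $(I'',j',t)$ with $I''=I^{(j,s)}_{\mathrm{sort}}$, $t=I_j$, $s=I''_{j'}$; a direct sign count shows $\epsilon_{j,s,I}=\epsilon_{j',t,I''}$. Using $a^{(j',t)}_{I''}=\epsilon_{j',t,I''}a_I$ (a direct consequence of the definition $a^{(k,s)}_I=(\de x^*_{I_k\to s})(\vol)$ together with the sorting sign), the cross sum rewrites as
\[
\sum_{I''}\sum_{t\notin I''}b_{I''}\sum_{j'=1}^n a^{(j',t)}_{I''}\,\partial_{I''_{j'}}\bigl(\{a,x_t\}\bigr)\,\vol,
\]
where $\partial_s=(\de x_s)^*$. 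Lemma \ref{change one} then collapses the inner sum to $a_{I''}\,\partial_t(\{a,x_t\})$, so the cross contribution becomes $\bigl(\sum_t(\de x_t)^*(\{a,x_t\})\sum_{I\not\ni t}a_Ib_I\bigr)\vol$. Adding the diagonal and cross contributions and invoking $\sum_I a_Ib_I=1_R$ delivers $\bigl(\sum_s(\de x_s)^*(\{a,x_s\})\bigr)\vol$, matching the first term of the theorem.

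The hard part will be the reindexing and sign bookkeeping in the cross term: because the dual basis of $\Omega^1(R)$ has $r\ge n$ elements with nontrivial relations, $(\de x_s)^*$ is not a conventional partial derivative, and the essential algebraic identity enabling the collapse of the cross contribution is Lemma \ref{change one}, which must be applied precisely in the reindexed variables after the sorting signs $\epsilon_{j,s,I}$ have been correctly absorbed.
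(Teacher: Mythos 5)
Your proposal is correct and follows essentially the same route as the paper: expand $\vol=\sum_{I}a_I\,\de x_I$, split $\mathscr{L}_{H_a}(\vol)=\de\iota_{H_a}(\vol)$ into the $\{a,a_I\}$ part and the part coming from $\de\{a,x_{I_j}\}$, separate the diagonal ($s=I_j$) from the cross ($s\notin I$) contributions, and collapse the cross sum with the same reindexing bijection on triples and Lemma \ref{change one}. The only cosmetic differences are that you extract the $\sum_{I}\{a,a_I\}b_I$ term via the Leibniz rule for $\mathscr{L}_{H_a}$ where the paper uses Lemma \ref{change one duality}, and you finish by invoking $\sum_{I}a_Ib_I=1_R$ where the paper recombines via $\sum_{I}a_I\,\de x_I=\vol$.
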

\begin{proof} By \eqref{aI-bI formula} and Definition \ref{ctr0},
\begin{align*}
\iota_{H_{a}}(\vol)&=\sum_{I\in S}a_I\iota_{H_{a}}(\de\!x_I)\\
&=\sum_{I\in S}a_I\sum_{1\leq j \leq n}(-1)^{j-1}\{a, x_{I_j}\}
\de\!x_{I_1}\wedge \de\!x_{I_2}\wedge \cdots \widehat{\de\!x_{I_j}}
\cdots \wedge \de\!x_{I_n}.
\end{align*}

\begin{align*}
\de\!\iota_{H_{a}}(\vol) =&\sum_{I\in S}\sum_{1\leq j \leq
n}(-1)^{j-1}\{a, x_{I_j}\}\de\!a_I
\wedge \de\!x_{I_1}\wedge \de\!x_{I_2}\wedge \cdots \widehat{\de\!x_{I_j}} \cdots \wedge \de\!x_{I_n}\\
&+\sum_{I\in S}\sum_{1\leq j \leq n}(-1)^{j-1}a_I\de\{a, x_{I_j}\}\wedge \de\!x_{I_1}\wedge \de\!x_{I_2}
\wedge \cdots \widehat{\de\!x_{I_j}} \cdots \wedge \de\!x_{I_n}\\
\stackrel{(a)}{=}&\sum_{I\in S}\{a, a_I\}\de\!x_{I_1}\wedge \de\!x_{I_2}\wedge \dots \wedge\de\!x_{I_n}\\
&+\sum_{I\in S}\sum_{1\leq j \leq n}a_I(\de\!x_{I_j})^*(\{a, x_{I_j}\})\de\!x_{I_1}\wedge \de\!x_{I_2}\wedge \cdots
\wedge\de\!x_{I_j}\wedge\cdots \wedge \de\!x_{I_n}\\
+\sum_{I\in S}\sum_{1\leq j \leq n}& \sum_{s\notin I}
(-1)^{j-1}a_I(\de\!x_{s})^*(\{a, x_{I_j}\})\de\!x_s \wedge
\de\!x_{I_1}\wedge \de\!x_{I_2}\wedge \cdots \widehat{\de\!x_{I_j}}
\cdots \wedge \de\!x_{I_n}.
\end{align*}
where (a) holds by Lemma \ref{change one duality} and $\de\{a, x_{I_j}\}=\sum_{1\leq s \leq r}(\de\!x_{s})^*(\{a, x_{I_j}\})\de\!x_s$.

In order to compute the last term, consider the one-to-one
correspondence on the set of triples $\{(I,j,s)\mid I\in S, 1\leq j
\leq n, s\notin I \}$,
$$ (I\in S, 1\leq j \leq n, s\notin I) \mapsto (I'\in S, 1\leq j' \leq n, s'\notin I')$$
where $I'=(I \backslash \{I_j\})\cup\{s\}= \{I_1, \cdots, \widehat{I_j}, \cdots, I_n, s\}$,
$j'$ is the unique number satisfying $I_{j'-1} < s <I_{j'}$ (i.e. $I'_{j'} = s$) and $s'=I_j$.
Then
\begin{align*}
&\sum_{I\in S}\sum_{1\leq j \leq n}\sum_{s\notin I}(-1)^{j-1}a_I(\de\!x_{s})^*(\{a, x_{I_j}\})
\de\!x_s\wedge \de\!x_{I_1}\wedge \de\!x_{I_2}\wedge \cdots \widehat{\de\!x_{I_j}} \cdots \wedge \de\!x_{I_n}\\
=&\sum_{I'\in S}\sum_{1\leq j' \leq n}\sum_{s'\notin I'}\vol(\de\!x^*_{I'_{j'} \to
{s'}}) (\de\!x_{I'_{j'}})^*(\{a, x_{s'}\})\de\!x_{I'}\\
\stackrel{(b)}{=}&\sum_{I'\in S}\sum_{s'\notin I'}\vol(\de\!x_{I'}^*)(\de\!x_{s'})^*(\{a, x_{s'}\})\de\!x_{I'}\\
=&\sum_{I\in S}\sum_{s\notin I}a_I(\de\!x_{s})^*(\{a,
x_{s}\})\de\!x_{I},
\end{align*}
where (b) holds by Lemma \ref{change one}. So
\begin{align*}
&\de\!\iota_{H_{a}}(\vol)\\
=&\sum_{I\in S}\{a, a_I\}\de\!x_I+\sum_{I\in S}\sum_{1\leq j \leq
n}a_I(\de\!x_{I_j})^*(\{a, x_{I_j}\})\de\!x_{I}
+\sum_{I\in S}\sum_{s\notin I}a_I(\de\!x_{s})^*(\{a, x_{s}\})\de\!x_{I}\\
=&\sum_{I\in S}\{a, a_I\}\de\!x_{I}+\sum_{I\in S}\sum_{1\leq s\leq r}a_I(\de\!x_{s})^*(\{a, x_{s}\})\de\!x_{I}\\
=&\sum_{I\in S}\{a, a_I\}b_I\vol+\sum_{1\leq s\leq r}(\de\!x_{s})^*(\{a, x_{s}\})\vol.
\end{align*}
It follows from the definition of modular derivation that
\begin{align*}
\phi_{\vol}(a)=\sum_{1\leq s\leq r}(\de\!x_{s})^*(\{a, x_{s}\})+\sum_{I\in S}\{a, a_I\}b_I=\phi_1(a)+\phi_2(a),
\end{align*}
where $\phi_1(a)=\sum_{1\leq s\leq r}(\de\!x_{s})^*(\{a, x_{s}\})$, $\phi_2(a)=\sum_{I\in S}\{a, a_I\}b_I$.
\end{proof}
\begin{rmk}
If $r=n$, the set $S$ has only one element $I=(1, 2, \cdots , n)$, then
$\vol=a_I\de\!x_I$ and  $a_Ib_I=1_R$ by \eqref{aI-bI formula}. So
$\phi_2 = \{-, a_I\}b_I=a_I^{-1}\{-, a_I\}$
is a log-Hamiltonian derivation.
In this case, $b_I\vol$ is also a volume form of $R$ and the modular derivation with respect to $b_I\vol$ is $$\phi_{b_I\vol}=\phi_{\vol}-b_I^{-1}H_{b_I}=\phi_{\vol}-\phi_2=\phi_1.$$
\end{rmk}

\begin{exam}
Consider the smooth algebra  $R$ in Example \ref{ep-of -smoth}. Suppose
 $\{-, -\}$ is a Poisson structure over $R$. Then the modular derivation with respect to $\eta=x\de\!y \wedge \de\!z + y\de\!z \wedge \de\!x + z\de\!x \wedge \de\!y$ is given by
 $$\phi_{\eta}(a)= (\de\!x )^*(\{a, x \})+(\de\!y )^*(\{a, y \})+(\de\!z )^*(\{a, z \}),$$
 as $\sum_{I\in S}\{a, a_I\}b_I=\{a,x\}x+\{a,y\}y+\{a,z\}z=0$ for any $a\in R.$
\end{exam}

\section{Poincar\'{e} duality for smooth Poisson algebras}

Any Poisson module can be twisted with a Poisson derivation as given in \cite[Proposition 2.7]{LWW}. In this section,  for any smooth Poisson algebra with trivial canonical bundle, we prove that  the Poisson cochain complex with values in any Poisson module is isomorphic
to the Poisson chain complex with values in the corresponding twisted Poisson module.
Then a version of twisted Poincar\'{e} duality is deduced between Poisson
homologies and Poisson cohomologies, which generalize  \cite[Theorem 3.5]{LWW} for polynomial Poisson algebras.

\subsection{Duality between  derivations and K\"{a}hler differentials}
In this section,  let $R$ be a smooth  affine algebra of dimension $n$ and $\{(\de\!x_i), (\de\!x_i)^{*}\}_{i=1}^r$ be  a dual basis for the projective module $\Omega^1(R)$.

Recall that for any $f_i \in \Hom_R(\Omega^1(R), R)\cong \mathfrak{X}^1(R)$ for $1 \leq i \leq p$, and
$\omega=a_0\de\!a_1 \wedge \de\!a_2 \wedge \cdots \wedge \de\!a_p \in \Omega^p(R)$,
by  Lemma \ref{Xp=Hom-p} and Definition \ref{F-wedge-G},
$$
(f_1 \wedge f_2 \wedge \dots \wedge f_p)(\omega)=
 {a_0 \left|\begin{matrix}
 f_1(\de\!a_1) & f_1(\de\!a_2)& \cdots & f_1(\de\!a_p)\\
 f_2(\de\!a_1) & f_2(\de\!a_2)& \cdots & f_2(\de\!a_p)\\
 \vdots & \vdots & \cdots & \vdots\\
 f_p(\de\!a_1) & f_p(\de\!a_2) & \cdots & f_p(\de\!a_p)\\
\end{matrix}\right|}.
$$
Then $f_1 \wedge f_2 \wedge \dots \wedge f_p \in \mathfrak{X}^p(R)\cong \Hom_R(\Omega^p(R), R).$

Since $\{(\de\!x_i), (\de\!x_i)^{*}\}_{i=1}^r$ is a dual basis for $\Omega^1(R)$,
$\{(\de\!x_i)^*, (\de\!x_i)^{**}\}_{i=1}^r$ is a dual basis
for $ \Hom_R(\Omega^1(R), R) \cong \der(R)$ which is projective.
Then, for any $a \in R$, 
\begin{equation}\label{formula-for-differentials} \de\!a = \sum_{i=1}^r (\de\!x_i)^* (\de\!a)
\de\!x_i = \sum_{i=1}^r (\de\!x_i)^* (a) \de\!x_i,
\end{equation}
and for any $\xi \in \der(R)$,
\begin{equation}\label{formula-for-derivations}
\xi = \sum_{i=1}^r (\de\!x_i)^{**}(\xi)(\de\!x_i)^* =\sum_{i=1}^r
\xi(\de\!x_i)(\de\!x_i)^*=\sum_{i=1}^r \xi(x_i)(\de\!x_i)^*.
\end{equation}
For any $F \in \mathfrak{X}^p(R)$ and fixed  $a_2, \cdots, a_p \in R$,
define $\xi_{a_2, \cdots, a_p}(F) \in \der(R)$ by
$$ \xi_{a_2, \cdots, a_p}(F) (a) \triangleq F(a\wedge a_2\wedge \cdots\wedge a_p).$$
Then, by \eqref{formula-for-derivations}, for any $a \in R$,
\begin{equation}\label{formula-for-derivations-2}
F(a\wedge a_2\wedge \cdots\wedge a_p)= \sum_{i=1}^r \xi_{a_2, \cdots, a_p}(F) (x_i)(\de\!x_i)^*(a)
= \sum_{i=1}^r (\de\!x_i)^*(a) F(x_i\wedge a_2\wedge \cdots\wedge a_p).\end{equation}

The following lemma tells us for any dual basis  $\{\de\!x_i;
(\de\!x_i)^*\}_{i=1}^r$ of $\Omega^1(R)$,  $$\sum_{i=1}^r
(\de\!x_i)^*(\de\!x_i)= n \cdot 1_R,$$ where $n$ is the smooth dimension of $R$.

\begin{lem}\label{dual-basis=n} Let $R$ be a smooth algebra of dimension $n$ with trivial canonical bundle, and $\{\de\!x_i;
(\de\!x_i)^*\}_{i=1}^r$  be a dual basis of the projective
$R$-module $\Omega^1(R)$. Then $\sum_{i=1}^r (\de\!x_i)^*(\de\!x_i)=n \cdot 1_R.$
\end{lem}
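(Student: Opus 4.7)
The plan is to recognize the sum $\sum_{i=1}^r (\de\!x_i)^*(\de\!x_i)$ as the trace of the identity endomorphism of the projective $R$-module $\Omega^1(R)$, and then to compute this trace by a local argument. Recall that for any finitely generated projective $R$-module $P$ with dual basis $\{e_i,e_i^*\}_{i=1}^r$ and any endomorphism $f\in\Hom_R(P,P)$, one defines
\[
\mathrm{tr}(f)=\sum_{i=1}^r e_i^*(f(e_i))\in R,
\]
and this element is independent of the chosen dual basis. Applied to $P=\Omega^1(R)$ and $f=\id_{\Omega^1(R)}$, this gives exactly $\sum_{i=1}^r (\de\!x_i)^*(\de\!x_i)$.

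Next I would argue that this trace equals $n\cdot 1_R$ by showing that $\Omega^1(R)$ has rank $n$ at every prime of $R$. For any prime $\mathfrak{p}\subset R$, the localization $\Omega^1(R)_{\mathfrak{p}}$ is free over $R_{\mathfrak{p}}$ of some rank $r_{\mathfrak{p}}$ (since $\Omega^1(R)$ is finitely generated projective). Because wedge powers commute with localization, one has
\[
\wedge^k_{R_{\mathfrak{p}}}\Omega^1(R)_{\mathfrak{p}}\cong \Omega^k(R)_{\mathfrak{p}},\qquad \text{with rank }\binom{r_{\mathfrak{p}}}{k}.
\]
The hypothesis that $R$ is smooth of dimension $n$ with trivial canonical bundle says $\Omega^n(R)\cong R$, which is nonzero after localization, forcing $r_{\mathfrak{p}}\ge n$; while $\Omega^{n+1}(R)=0$ by the definition of smooth dimension, forcing $r_{\mathfrak{p}}\le n$. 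Hence $r_{\mathfrak{p}}=n$ for every $\mathfrak{p}$.

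The trace of the identity of a projective module is the (locally constant) rank function, viewed as an element of $R$. Since the rank equals $n$ at every prime of $R$, the trace equals $n\cdot 1_R$. The main obstacle—and it is really a routine bookkeeping rather than a genuine difficulty—is the verification that $\sum_{i=1}^r(\de\!x_i)^*(\de\!x_i)$ is independent of the chosen dual basis and really does coincide with the classical trace of the identity; once that is in hand, the local rank count via $\wedge^n$ and $\wedge^{n+1}$ finishes the proof.
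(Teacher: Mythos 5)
Your argument is correct, and it takes a genuinely different route from the paper. You identify $\sum_{i=1}^r(\de\!x_i)^*(\de\!x_i)$ with the Hattori--Stallings trace of $\id_{\Omega^1(R)}$, whose basis-independence follows from the isomorphism $P\otimes_R\Hom_R(P,R)\cong\Hom_R(P,P)$ for finitely generated projective $P$, and then compute it locally: since $\wedge^\bullet$ and $\Hom$ from a finitely generated projective commute with localization, $\Omega^{n+1}(R)=0$ forces $\operatorname{rank}_{\mathfrak p}\Omega^1(R)\le n$ and $\Omega^n(R)\cong R$ (nonzero in every $R_{\mathfrak p}$) forces $\operatorname{rank}_{\mathfrak p}\Omega^1(R)\ge n$, so the trace is $n\cdot 1_R$ because it agrees with $n\cdot 1$ in every localization. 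This is cleaner and more conceptual than the paper's proof, which instead expands the determinant defining $\sum_i(f_1\wedge\cdots\wedge f_p\wedge(\de\!x_i)^*)(\omega\wedge\de\!x_i)$ along its last column, obtains the identity
$$\sum_{i=1}^r(f_1\wedge\cdots\wedge f_p\wedge(\de\!x_i)^*)(\omega\wedge\de\!x_i)=\Bigl(\sum_{i=1}^r(\de\!x_i)^*(\de\!x_i)-p\cdot 1_R\Bigr)(f_1\wedge\cdots\wedge f_p)(\omega)$$
for all $p$, and then specializes to $p=n$ using $\Omega^{n+1}(R)=0$ and the faithfulness of $\mathfrak{X}^n(R)\cong R$. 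What the paper's computational route buys is precisely this intermediate identity for general $p$, which is reused verbatim to prove Corollary \ref{f-omega} (needed later for Theorem \ref{dual iso}); your route does not produce it. Conversely, your route makes transparent exactly where each hypothesis enters and why the answer is the smooth dimension, at the cost of invoking standard but external facts about traces and ranks of projective modules.
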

\begin{proof} For any $f_i \in \Hom_R(\Omega^1(R), R) (1 \leq i \leq
p)$ and $a=a_0\de\!a_1 \wedge \de\!a_2 \wedge \cdots \wedge \de\!a_p \in
\Omega^p(R)$,
\begin{align*}
&\sum_{i=1}^r(f_1 \wedge f_2\wedge  \cdots\wedge f_p \wedge
(\de\!x_i)^*)
(a_0\de\!a_1 \wedge \de\!a_2 \wedge \cdots \wedge \de\!a_p\wedge \de\!x_i)\\
 =& \sum_{i=1}^r {a_0 \left|\begin{matrix}
 f_1(\de\!a_1) & f_1(\de\!a_2)& \cdots & f_1(\de\!a_p)& f_1(\de\!x_i)\\
 f_2(\de\!a_1) & f_2(\de\!a_2)& \cdots & f_2(\de\!a_p)& f_2(\de\!x_i) \\
 \vdots & \vdots & \cdots & \vdots\\
 f_p(\de\!a_1) & f_p(\de\!a_2) & \cdots & f_p(\de\!a_p) & f_p(\de\!x_i)\\
(\de\!x_i)^*(\de\!a_1) & (\de\!x_i)^*(\de\!a_2)& \cdots & (\de\!x_i)^*(\de\!a_p)& (\de\!x_i)^*(\de\!x_i) \\
\end{matrix}\right|}\\
=&\sum_{i=1}^r (-1)^pa_0 (\de\!x_i)^*(\de\!a_1)(f_1 \wedge f_2 \wedge \dots
\wedge f_p)( \de\!a_2 \wedge \cdots \wedge \de\!a_p  \wedge \de\!x_i)\\
& +\sum_{i=1}^r (-1)^{p+1}a_0 (\de\!x_i)^*(\de\!a_2)(f_1 \wedge f_2 \wedge
\dots\wedge f_p)(\de\!a_1 \wedge \de\!a_3 \wedge \cdots \wedge \de\!a_p  \wedge \de\!x_i)\\
 & + \cdots \\
 &+\sum_{i=1}^r (-1)^{2p-1}a_0 (\de\!x_i)^*(\de\!a_p)(f_1 \wedge f_2 \wedge
\dots\wedge f_p)(\de\!a_1 \wedge \de\!a_2 \wedge \cdots \wedge \de\!a_{p-1}  \wedge \de\!x_i)\\
 & +\sum_{i=1}^r a_0 (\de\!x_i)^*(\de\!x_i)(f_1 \wedge f_2 \wedge
\dots\wedge f_p)(\de\!a_1 \wedge \de\!a_2 \wedge \cdots \wedge \de\!a_p)\\
\stackrel{(a)}{=}&-pa_0(f_1 \wedge f_2 \wedge
\dots\wedge f_p)(\de\!a_1 \wedge \de\!a_2 \wedge \cdots \wedge \de\!a_p)\\
& + \sum_{i=1}^r a_0 (\de\!x_i)^*(\de\!x_i)(f_1 \wedge f_2 \wedge
\dots\wedge f_p)(\de\!a_1 \wedge \de\!a_2 \wedge \cdots \wedge \de\!a_p)  \\
=& a_0 (\sum_{i=1}^r (\de\!x_i)^*(\de\!x_i)-p\cdot 1_R) (f_1 \wedge f_2 \wedge
\dots\wedge f_p)(\de\!a_1 \wedge \de\!a_2 \wedge \cdots \wedge \de\!a_p),
\end{align*}
where $(a)$ holds by \eqref{formula-for-derivations-2}.
It follows that
\begin{align*}
&\sum_{i=1}^r(f_1 \wedge f_2 \wedge \dots \wedge f_p \wedge
(\de\!x_i)^*)
(a_0\de\!a_1 \wedge \de\!a_2 \wedge \cdots \wedge \de\!a_p\wedge \de\!x_i)\\
=&a_0 (\sum_{i=1}^r (\de\!x_i)^*(\de\!x_i)-p\cdot 1_R) (f_1 \wedge f_2 \wedge
\dots\wedge f_p)(\de\!a_1 \wedge \de\!a_2 \wedge \cdots \wedge \de\!a_p).
\end{align*}

Take $p=n$. Note that $\Omega^{n+1}(R)=0$, then it follows that
$$(\sum_{i=1}^r (\de\!x_i)^*(\de\!x_i)-n\cdot 1_R) (f_1 \wedge f_2 \wedge
\dots\wedge f_n)=0.$$
Since $\Omega^n(R)\cong R$,  $\sum_{i=1}^r
(\de\!x_i)^*(\de\!x_i)=n\cdot 1_R$.
\end{proof}

\begin{cor}\label{f-omega}
For any $f \in \Hom_R(\Omega^p(R), R)$ and $\omega \in \Omega^p(R)$,
\begin{align*}
&f(\omega) =\sum_{1\leq i_1<i_2<\cdots <i_{n-p}\leq r}
(f\wedge(\de\!x_{i_1})^*\wedge
 \cdots \wedge(\de\!x_{i_{n-p}})^*)
(\omega\wedge \de\!x_{i_1}\wedge  \cdots \wedge \de\!x_{i_{n-p}})
\end{align*}
\end{cor}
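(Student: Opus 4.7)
The plan is to iterate the computation that underlies the proof of Lemma \ref{dual-basis=n}. Inspecting that proof, one extracts (once $\sum_i (\de x_i)^*(\de x_i)=n$ is substituted) the following reduction formula: for any $p\geq 0$, any $g\in \mathfrak{X}^p(R)$, and any $\omega\in \Omega^p(R)$,
$$\sum_{i=1}^r (g\wedge (\de x_i)^*)(\omega\wedge \de x_i)=(n-p)\cdot g(\omega).$$
The determinantal identity in Lemma \ref{dual-basis=n} produces this for simple wedges $g=f_1\wedge\cdots\wedge f_p$, and it extends to all $g\in \mathfrak{X}^p(R)$ by $R$-linearity, since Corollary \ref{smooth duality} identifies $\mathfrak{X}^p(R)$ with $\wedge^p_R\der(R)$, which is $R$-linearly spanned by such simple wedges.

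Next I would introduce the unordered counterpart of the sum appearing in the corollary,
$$T_k(f,\omega):=\sum_{1\leq i_1,\ldots,i_k\leq r}\bigl(f\wedge (\de x_{i_1})^*\wedge\cdots\wedge (\de x_{i_k})^*\bigr)\bigl(\omega\wedge \de x_{i_1}\wedge\cdots\wedge \de x_{i_k}\bigr),$$
and observe two things: by antisymmetry of $\wedge$ (in both $\mathfrak{X}^*$ and $\Omega^*$) any tuple with a repeated index contributes zero, while the $k!$ tuples sharing the same underlying $k$-subset contribute equally because the two resulting sign factors cancel. Hence $T_k(f,\omega)=k!\cdot S_k(f,\omega)$, where $S_k(f,\omega)$ denotes the sum over strictly increasing tuples as in the corollary's statement.

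The main computation is then to iterate the reduction formula: peeling the innermost sum over $i_k$ from $T_k$ and applying the formula with $g=f\wedge (\de x_{i_1})^*\wedge\cdots\wedge (\de x_{i_{k-1}})^*\in \mathfrak{X}^{p+k-1}(R)$ yields $T_k(f,\omega)=(n-p-k+1)\cdot T_{k-1}(f,\omega)$. With $T_0(f,\omega)=f(\omega)$, induction on $k$ gives $T_{n-p}(f,\omega)=(n-p)!\cdot f(\omega)$. Comparing this with $T_{n-p}=(n-p)!\cdot S_{n-p}$ and cancelling $(n-p)!$ yields the corollary. The main obstacles I anticipate are (i) correctly distilling the $g$-linear reduction formula from the proof of Lemma \ref{dual-basis=n} (the calculation is already there, merely obscured by the assumption of a simple wedge) and (ii) the sign bookkeeping needed to identify $T_k$ with $k!\cdot S_k$; the tacit cancellation of $(n-p)!$ is a mild characteristic-zero step that fits the paper's implicit conventions.
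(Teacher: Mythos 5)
Your proposal is correct and follows essentially the same route as the paper: the paper's own proof introduces the unordered sum, iteratively peels off one index at a time using the displayed computation from Lemma \ref{dual-basis=n} (accumulating the factor $(n-p)!$), and compares with $(n-p)!$ times the ordered sum. Your explicit formulation of the reduction step $T_k=(n-p-k+1)T_{k-1}$ and the remark about inverting $(n-p)!$ merely make precise what the paper leaves implicit.
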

\begin{proof} It follows from the proof of Lemma  \ref{dual-basis=n}
that
\begin{align*}
&\sum_{1\leq i_1,i_2,\cdots ,i_{n-p}\leq
r}(f\wedge(\de\!x_{i_1})^*\wedge \cdots \wedge(\de\!x_{i_{n-p}})^*)
(\omega\wedge \de\!x_{i_1}\wedge  \cdots \wedge \de\!x_{i_{n-p}})\\
=&\sum_{1\leq i_1,i_2,\cdots ,i_{n-p-1}\leq r}(f\wedge
(\de\!x_{i_1})^*\wedge
 \cdots \wedge(\de\!x_{i_{n-p-1}})^*)
(\omega\wedge \de\!x_{i_1}\wedge \cdots \wedge \de\!x_{i_{n-p-1}})\\
=&2!\sum_{1\leq i_1,i_2,\cdots ,i_{n-p-2}\leq r}(f\wedge
(\de\!x_{i_1})^*\wedge
 \cdots \wedge(\de\!x_{i_{n-p-2}})^*)
(\omega\wedge \de\!x_{i_1}\wedge  \cdots \wedge \de\!x_{i_{n-p-2}})\\
=&\cdots\\
=&(n-p)!f(\omega).
\end{align*}
On the other hand,
\begin{align*}
&\sum_{1\leq i_1,i_2,\cdots ,i_{n-p}\leq r}(f\wedge(\de\!x_{i_1})^*\wedge
 \cdots \wedge(\de\!x_{i_{n-p}})^*)
(\omega\wedge \de\!x_{i_1}\wedge  \cdots \wedge \de\!x_{i_{n-p}})\\
=&(n-p)!\sum_{1\leq i_1<i_2<\cdots <i_{n-p}\leq
r}(f\wedge(\de\!x_{i_1})^*\wedge \cdots \wedge(\de\!x_{i_{n-p}})^*)
(\omega\wedge \de\!x_{i_1}\wedge  \cdots \wedge \de\!x_{i_{n-p}}).
\end{align*}
The proof is finished.
\end{proof}

Next, we establish the duality between the derivations and K\"{a}hler differentials for any smooth algebra with trivial canonical bundle  by using the contraction maps.

\begin{thm}\label{dual iso}
Let $R$ be a  smooth algebra of dimension $n$ with trivial canonical bundle. Let $\vol \in \Omega^n(R)$  be a
volume form for $R$. Then the following canonical map is an isomorphism of $R$-modules:
$$\ddag^p_R:\mathfrak{X}^p(R) \to \Omega^{n-p}(R), \, F \mapsto
\iota_F(\vol).$$
\end{thm}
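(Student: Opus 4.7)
The plan is to prove that $\ddag^p_R$ is an isomorphism in three steps: first, establish a key duality identity relating the contraction $\iota_F(\vol)$ to the wedge-product pairing; second, use this identity to construct an explicit left inverse and conclude split injectivity; third, use duality to upgrade the split injection to an isomorphism.

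The central technical step is to prove the identity
\begin{equation*}
\omega \wedge \iota_F(\vol) = F(\omega)\,\vol \in \Omega^n(R), \qquad F \in \mathfrak{X}^p(R),\ \omega \in \Omega^p(R),
\end{equation*}
where $F(\omega)$ denotes the evaluation under the isomorphism $\mathfrak{X}^p(R) \cong \Hom_R(\Omega^p(R), R)$ of Corollary \ref{smooth duality}. By $R$-linearity and Corollary \ref{smooth duality} it suffices to check the identity for $F = \xi_1 \wedge \cdots \wedge \xi_p$ with $\xi_i \in \Der(R)$ and $\omega = \de a_1 \wedge \cdots \wedge \de a_p$. The base case $p=1$ is exactly Lemma \ref{contraction-formula}(1). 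For the inductive step, one writes $\iota_F = \iota_{\xi_p} \cdots \iota_{\xi_1}$ via Proposition \ref{ctr com} and applies the super-derivation property of each single-vector-field contraction $\iota_{\xi_i}$, combined with the dimensional vanishing $\Omega^{n+k}(R) = 0$ for $k \geq 1$. Expanding $\iota_{\xi_i}(\omega \wedge \vol) = 0$ and using Lemma \ref{contraction-formula}(1) to rewrite terms of the form $\de a_j \wedge \iota_{\xi_i}(\vol)$ as $\xi_i(a_j)\,\vol$, the surviving terms recombine into precisely the determinantal expression that Corollary \ref{smooth duality} identifies with $F(\omega)$.

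With the identity in hand, I would define an $R$-linear map $\natural^{n-p}_R : \Omega^{n-p}(R) \to \mathfrak{X}^p(R)$ as follows: for $\omega'' \in \Omega^{n-p}(R)$, the assignment $\omega' \mapsto \omega' \wedge \omega''$ lands in $\Omega^n(R) = R\vol$, and dividing by $\vol$ (permissible because $\Omega^n(R)$ is free of rank one) produces an element of $\Hom_R(\Omega^p(R), R) \cong \mathfrak{X}^p(R)$, which we declare to be $\natural^{n-p}_R(\omega'')$. The key identity immediately yields $\natural^{n-p}_R \circ \ddag^p_R = \id_{\mathfrak{X}^p(R)}$, so $\ddag^p_R$ is split injective. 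For surjectivity, dualize this equation: since $\mathfrak{X}^p(R)$ and $\Omega^{n-p}(R)$ are finitely generated projective (by Corollary \ref{smooth duality} and Lemma \ref{proj-wedge}), the equality $(\ddag^p_R)^* \circ (\natural^{n-p}_R)^* = \id$ shows that $(\ddag^p_R)^*$ is a split surjection. Via the isomorphisms $\mathfrak{X}^q(R) \cong \Omega^q(R)^*$ from Corollary \ref{smooth duality}, the dual $(\ddag^p_R)^*$ is identified with an $R$-linear map $\mathfrak{X}^{n-p}(R) \to \Omega^p(R)$, and a short computation using Proposition \ref{ctr com} (specifically $\iota_F\iota_G(\vol) = (-1)^{p(n-p)}\iota_G\iota_F(\vol)$ for $F \in \mathfrak{X}^p(R)$, $G \in \mathfrak{X}^{n-p}(R)$) shows that it equals $(-1)^{p(n-p)}\,\ddag^{n-p}_R$. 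Hence $\ddag^{n-p}_R$ is surjective for every $p$; swapping $p \leftrightarrow n-p$ yields surjectivity of $\ddag^p_R$ for every $p$, which combined with split injectivity shows that $\ddag^p_R$ is an isomorphism.

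The main obstacle is organizing the induction in the proof of the key identity, where the expansions of $\iota_{\xi_k}(\omega \wedge \vol) = 0$ must be combined carefully so that the alternating cancellations produce precisely the determinantal expression for $F(\omega)$ without spurious signs. Once that identity is in place, the construction of the left inverse $\natural^{n-p}_R$ and the duality argument for surjectivity are essentially formal.
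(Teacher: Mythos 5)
Your proposal is correct, but it organizes the argument differently from the paper, so a comparison is worthwhile. Your key identity $\omega\wedge\iota_F(\vol)=F(\omega)\vol$ is true and is in fact equivalent to the composite $\flat^{n-p}_R\circ\ddag^p_R=\id$ that the paper verifies: your left inverse $\natural^{n-p}_R$ (wedge with $\omega''$ and divide by $\vol$) is literally the paper's map $\flat^{n-p}_R=\iota_{(-)}(\vol^*)$, by Definition \ref{ctr2}. The paper proves \emph{both} composites $\ddag^p_R\flat^{n-p}_R=\id$ and $\flat^{n-p}_R\ddag^p_R=\id$ by expanding everything in a dual basis of $\Omega^1(R)$, the first composite resting on the combinatorial identity of Corollary \ref{f-omega}. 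You instead prove only one composite and then obtain surjectivity formally, by dualizing the split injection and identifying $(\ddag^p_R)^*$ with $(-1)^{p(n-p)}\ddag^{n-p}_R$ via reflexivity of the finitely generated projectives $\Omega^p(R)$; this identification is exactly the adjointness $F(\iota_G\vol)=(-1)^{p(n-p)}G(\iota_F\vol)$ coming from Proposition \ref{ctr com}, and it does yield surjectivity of $\ddag^q_R$ for all $q$ after swapping $p\leftrightarrow n-p$. Your route buys a coordinate-free argument that avoids Corollary \ref{f-omega} entirely, at the cost of the reflexivity bookkeeping; the paper's route is more computational but produces the inverse explicitly (which it reuses later, e.g.\ in Theorem \ref{dagM and dagR}). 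One small suggestion on your induction for the key identity: rather than decomposing $F$ as $\xi_1\wedge\cdots\wedge\xi_p$ and expanding each $\iota_{\xi_i}$, it is cleaner to induct on the form side using \eqref{ctr-equality-no} with $\eta=\vol$, which gives $\de a\wedge\iota_F(\vol)=\iota_{\iota_{\de a}F}(\vol)$ directly and reduces $p$ by one with no sign juggling; the version you sketch also works but the mixed terms require more care than the phrase ``expanding $\iota_{\xi_i}(\omega\wedge\vol)=0$'' suggests, since for $p\ge 2$ that single expansion collapses to the unhelpful statement $\omega\wedge\iota_{\xi_i}(\vol)=0$.
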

\begin{proof}  
In terms
of the dual basis,
\begin{align*}
&\iota_F(\vol)\\
=&\sum_{1\leq j_{p+1} < \cdots < j_n \leq r} ((\de\!x_{j_{p+1}})^* \wedge \cdots \wedge (\de\!x_{j_n})^*) [\iota_F (\vol)]
\de\!x_{j_{p+1}} \wedge \cdots \wedge \de\!x_{j_n}\\
=&\sum_{1\leq j_{p+1} < \cdots < j_n \leq r}  [\iota_F (\vol)]((\de\!x_{j_{p+1}})^* \wedge \cdots \wedge (\de\!x_{j_n})^*) \,
\de\!x_{j_{p+1}} \wedge \cdots \wedge \de\!x_{j_n}\\
=&\sum_{1\leq j_{p+1} < \cdots < j_n \leq r} (F \wedge (\de\!x_{j_{p+1}})^* \wedge \cdots \wedge (\de\!x_{j_n})^* )(\vol ) \,
\de\!x_{j_{p+1}} \wedge \cdots \wedge \de\!x_{j_n}.
\end{align*}

Consider the map $\flat^{n-p}_R: \Omega^{n-p}(R)  \to \mathfrak{X}^p(R), \, \omega \mapsto \iota_{\omega} (\vol^*).$
In terms of the dual basis,
\begin{align*}
&\iota_{\omega} (\vol^*)\\
=&\sum_{1 \leq j_1 < \cdots < j_p \leq r} \iota_{\omega}(\vol^*) (\de\!x_{j_1} \wedge \cdots \wedge \de\!x_{j_p})\,
 (\de\!x_{j_1})^*
\wedge (\de\!x_{j_2})^* \wedge \cdots \wedge (\de\!x_{j_p})^*\\
=&\sum_{1 \leq j_1 < \cdots < j_p \leq r} \vol^* (\de\!x_{j_1} \wedge \cdots \wedge \de\!x_{j_p}
\wedge \omega) \, (\de\!x_{j_1})^*
\wedge (\de\!x_{j_2})^* \wedge \cdots \wedge (\de\!x_{j_p})^*.
\end{align*}
Then
\begin{align*}
& (\ddag^p_R \flat^{n-p}_R)(\omega)\\
=&\sum_{1 \leq j_1 < \cdots < j_p \leq r} \sum_{1 \leq l_{p+1} < \cdots < l_n \leq r}  \vol^* (\de\!x_{j_1} \wedge \cdots \wedge \de\!x_{j_p} \wedge \omega)\\
& \quad ((\de\!x_{j_1})^*\wedge \cdots \wedge (\de\!x_{j_p})^* \wedge (\de\!x_{l_{p+1}})^* \wedge \cdots \wedge (\de\!x_{l_n})^*)(\vol)\,\de\!x_{l_{p+1}} \wedge \cdots \wedge \de\!x_{l_n}\\
=&\sum_{1 \leq l_{p+1} < \cdots < l_n \leq r} \sum_{1\leq j_1 < \cdots < j_p \leq r} ((\de\!x_{j_1})^* \wedge
\cdots \wedge (\de\!x_{j_p})^* \wedge (\de\!x_{l_{p+1}})^* \wedge \cdots
\wedge (\de\!x_{l_n})^*) \\
& \quad (\de\!x_{j_1} \wedge \cdots \wedge \de\!x_{j_p} \wedge \omega) \,\de\!x_{l_{p+1}} \wedge \cdots \wedge \de\!x_{l_n}  \\
=& \sum_{1 \leq l_{p+1} < \cdots < l_n \leq r}((\de\!x_{l_{p+1}})^* \wedge \cdots
\wedge (\de\!x_{l_n})^*)(\omega) \, \de\!x_{l_{p+1}} \wedge \cdots \wedge \de\!x_{l_n} \\
=& \omega.
\end{align*}
The second last equality holds by Corollary  \ref{f-omega}. On the other hand,
\begin{align*}
&(\flat^{n-p}_R  \ddag^p_R)(F)\\
  =&\sum_{1 \leq j_{p+1} < \cdots < j_n \leq r}\sum_{1 \leq l_1 < \cdots < l_p \leq r} (F
\wedge (\de\!x_{j_{p+1}})^* \wedge \cdots \wedge (\de\!x_{j_n})^*) (\vol)  \\
& \quad  \vol^* (\de\!x_{l_1} \wedge \cdots \wedge \de\!x_{l_p} \wedge
\de\!x_{j_{p+1}} \wedge \cdots \wedge \de\!x_{j_n}) \, (\de\!x_{l_1})^* \wedge
\cdots \wedge (\de\!x_{l_p})^*\\
=& \sum_{1 \leq l_1 < \cdots < l_p \leq r} F(\de\!x_{l_1} \wedge \cdots \wedge \de\!x_{l_p})
  \, (\de\!x_{l_1})^* \wedge
\cdots \wedge (\de\!x_{l_p})^*\\
=& F.
\end{align*}
So, $\ddag^p_R$ is an isomorphism with the inverse $\flat^{n-p}_R$.
\end{proof}

\begin{thm}\label{dagM and dagR}
Let $R$ be a smooth algebra of dimension $n$ with
trivial canonical bundle, and $\vol \in \Omega^n(R)$  be a
volume form for $R$. Then for any $R$-module $M$ and $p\in
\mathbb{N}$, the canonical map
$$\ddag^p_M: \mathfrak{X}^p(M) \to M
\otimes_R \Omega^{n-p}(R), \, F \mapsto \iota_F(\vol)$$ is the
composition of the following canonical isomorphisms
\begin{align*}
 \mathfrak{X}^p(M) & \cong \Hom_R(\Omega^p(R), M) \cong    M \otimes_R \Hom_R(\Omega^p(R), R)\\
 & \cong M \otimes_R  \mathfrak{X}^p(R)\stackrel{\id_M \otimes \ddag^p_R} {\cong}  M \otimes_R \Omega^{n-p}(R).
 \end{align*}
\end{thm}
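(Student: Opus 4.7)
The plan is to reduce the statement to Theorem \ref{dual iso} by factoring $\ddag^p_M$ through the canonical identification $\mathfrak{X}^p(M)\cong M\otimes_R \mathfrak{X}^p(R)$ and then checking agreement on a generating set of tensors. Both maps under comparison are $R$-linear, so it will suffice to verify the equality on elements of the form $mG$ with $m\in M$ and $G\in \mathfrak{X}^p(R)$.

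First I would collect the standing isomorphisms: by Lemma \ref{Xp=Hom-p}, $\mathfrak{X}^p(M)\cong \Hom_R(\Omega^p(R),M)$ and $\mathfrak{X}^p(R)\cong \Hom_R(\Omega^p(R),R)$. Since $R$ is smooth, $\Omega^1(R)$ is finitely generated projective, and Lemma \ref{proj-wedge} then ensures the same for $\Omega^p(R)$. Consequently, the canonical map $M\otimes_R \Hom_R(\Omega^p(R),R)\to \Hom_R(\Omega^p(R),M)$ is an isomorphism, yielding the composite isomorphism $\mathfrak{X}^p(M)\cong M\otimes_R \mathfrak{X}^p(R)$. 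Tracing through the explicit formulas of Lemma \ref{Xp=Hom-p}, the pure tensor $m\otimes G$ corresponds to the multi-derivation $mG\colon a_1\wedge\cdots\wedge a_p\mapsto m\cdot G(a_1\wedge\cdots\wedge a_p)$.

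Next I would make the key computation on pure tensors. For $F=mG$, reading off Definition \ref{ctr0} shows that the scalar $m$ appears in every term of the sum defining $\iota_F(\vol)$, via the factor $F(\cdots)=m\,G(\cdots)$, and since the remaining factors lie in $R$ they slide freely across the tensor symbol. This yields
\[
\iota_{mG}(\vol)\;=\;m\otimes \iota_G(\vol)\;=\;(\id_M\otimes\ddag^p_R)(m\otimes G),
\]
which is exactly the value of the claimed composite on $m\otimes G$. Because both $\ddag^p_M$ and the composite are $R$-linear out of $\mathfrak{X}^p(M)\cong M\otimes_R \mathfrak{X}^p(R)$, agreement on these generators is enough to force them to coincide globally.

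The main obstacle is bookkeeping rather than substance: one must ensure that the identification $\mathfrak{X}^p(M)\cong M\otimes_R\mathfrak{X}^p(R)$ is genuinely the canonical composite spelled out in the theorem, which depends squarely on the finite generation and projectivity of $\Omega^p(R)$ so that the ``slide-$M$-across-$\Hom$'' map is an isomorphism for arbitrary $M$. Once this naturality is in place and the shuffle and sign conventions in Definition \ref{ctr0} are seen to cooperate so that the scalar $m$ factors cleanly out of every summand, the theorem drops out with no further work.
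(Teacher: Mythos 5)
Your argument is correct, but the verification step is organized differently from the paper's. The paper first writes out the composite explicitly in terms of the dual basis, obtaining $F \mapsto \sum_{i_1<\cdots<i_p} F(x_{i_1}\wedge\cdots\wedge x_{i_p})\otimes \iota_{(\de x_{i_1})^*\wedge\cdots\wedge(\de x_{i_p})^*}(\vol)$, and then checks by a direct shuffle expansion against an arbitrary $n$-form that this expression equals $\iota_F(\vol)$ for a \emph{general} $F\in\mathfrak{X}^p(M)$. You instead exploit the fact that, once the canonical map $M\otimes_R\mathfrak{X}^p(R)\to\mathfrak{X}^p(M)$ is known to be an isomorphism (equivalently surjective), every $F$ is a sum of decomposable multi-derivations $mG$, and for those the identity $\iota_{mG}(\vol)=m\otimes\iota_G(\vol)$ falls out of Definition \ref{ctr0} because the $R$-valued factors slide across $\otimes_R$; additivity then finishes the job. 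Both routes rest on the same inputs (Lemmas \ref{Xp=Hom-p} and \ref{proj-wedge}, projectivity of $\Omega^p(R)$, and Theorem \ref{dual iso}), and your reliance on surjectivity of the canonical map is exactly where projectivity enters, which you correctly flag. What your version buys is the elimination of the page of shuffle bookkeeping; what the paper's version buys is the explicit dual-basis formula for $\ddag^p_M$ and, at the end, for its inverse $m\otimes\omega\mapsto\bigl[a_1\wedge\cdots\wedge a_p\mapsto m\cdot\vol^*(\de a_1\wedge\cdots\wedge\de a_p\wedge\omega)\bigr]$, which is reused later. If you adopt your route, it would be worth recording that inverse formula separately, since the theorem as used downstream needs it.
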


\begin{proof}
Since  $\Omega^{p}(R)$ is a finitely generated projective
$R$-module, the canonical map $\Hom_R(\Omega^p(R), M)
 \to   M \otimes_R \Hom_R(\Omega^p(R), R)$,
 $$ g \mapsto \sum_{1 \leq i_1 < \cdots < i_p \leq r}
 g(\de\!x_{i_1} \wedge \cdots \wedge \de\!x_{i_p}) \otimes (\de\!x_{i_1})^* \wedge \cdots \wedge (\de\!x_{i_p})^* $$
is an isomorphism by the dual basis lemma. Then the composition map is
$$F \mapsto  \sum_{1 \leq i_1 < \cdots < i_p \leq r} F(x_{i_1}\wedge \cdots \wedge x_{i_p}) \otimes
\iota_{(\de\!x_{i_1})^* \wedge \cdots \wedge (\de\!x_{i_p})^*
}(\vol).$$
For any $n$-form $\de\!a_1\wedge  \de\!a_2 \wedge \cdots
\wedge \de\!a_n \in \Omega^n(R)$,
\begin{align*}
&\sum_{1 \leq i_1 < \cdots < i_p \leq r} F(x_{i_1} \wedge \cdots \wedge x_{i_p})
\otimes \iota_{(\de\!x_{i_1})^* \wedge \cdots \wedge (\de\!x_{i_p})^* }(\de\!a_1\wedge \de\!a_2 \wedge \cdots \wedge \de\!a_n)\\
=&\sum_{1 \leq i_1 < \cdots < i_p \leq r} F(x_{i_1} \wedge \cdots \wedge x_{i_p}) \otimes  \sum_{\sigma \in S_{p, n-p}} \sgn(\sigma)\\
& \quad \big((\de\!x_{i_1})^* \wedge \cdots \wedge (\de\!x_{i_p})^*\big) (\de\!a_{\sigma(1)}\wedge  \cdots \wedge \de\!a_{\sigma(p)}) \,
\de\!a_{\sigma(p+1)} \wedge \cdots \wedge \de\!a_{\sigma(n)}\\
=&\sum_{1 \leq i_1 < \cdots < i_p \leq r} \sum_{\sigma \in S_{p,
n-p}} \sgn(\sigma) F(\de\!x_{i_1}\wedge \cdots \wedge
\de\!x_{i_p})\\
&\quad \big((\de\!x_{i_1})^* \wedge \cdots \wedge (\de\!x_{i_p})^*\big)
(\de\!a_{\sigma(1)}\wedge \cdots \wedge \de\!a_{\sigma(p)}) \otimes
 \de\!a_{\sigma(p+1)}\wedge \cdots \wedge \de\!a_{\sigma(n)}\\
=&\sum_{\sigma \in S_{p, n-p}} \sgn(\sigma) \sum_{1 \leq i_1 <
\cdots < i_p \leq r}\\
&\quad F [\big((\de\!x_{i_1})^* \wedge \cdots \wedge (\de\!x_{i_p})^*\big)
(\de\!a_{\sigma(1)}\wedge \cdots \wedge \de\!a_{\sigma(p)})
\de\!x_{i_1}\wedge \cdots \wedge
\de\!x_{i_p}]\\
& \quad \quad \otimes
 \de\!a_{\sigma(p+1)}\wedge \cdots \wedge \de\!a_{\sigma(n)}\\
=&\sum_{\sigma \in S_{p, n-p}} \sgn(\sigma)
F(\de\!a_{\sigma(1)}\wedge \cdots \wedge \de\!a_{\sigma(p)}) \otimes
\de\!a_{\sigma(p+1)}\wedge \cdots \wedge \de\!a_{\sigma(n)}\\
=&\iota_F(\de\!a_1\wedge \de\!a_2 \wedge \cdots \wedge \de\!a_n).
\end{align*}
It follows that $$\sum_{1 \leq i_1 < \cdots < i_p \leq r} F(x_{i_1} \wedge
\cdots \wedge x_{i_p}) \otimes \iota_{(\de\!x_{i_1})^* \wedge \cdots
\wedge (\de\!x_{i_p})^* }(\vol) =\iota_F(\vol).$$
So, $\ddag^p_M$ is the composition of the isomorphisms as stated.

The inverse map of  $\ddag^p_M$ is given by the composition
$$M \otimes_R \Omega^{n-p}(R) \to M \otimes_R \mathfrak{X}^p(R)
\to \mathfrak{X}^p(M),$$
\begin{align*}
 m \otimes \omega \mapsto  m \otimes \iota_{\omega}(\vol^*)
 \mapsto [a_1 \wedge \cdots \wedge a_p \mapsto  m \cdot \vol^* (\de\!a_1 \wedge \cdots \wedge \de\!a_p \wedge \omega)].
\end{align*}
\end{proof}

\subsection{Poincar\'{e} duality between Poisson homology and cohomology}
Assume that $R$ is a smooth Poisson algebra of dimension $n$ with trivial canonical bundle. By  \cite[Proposition 2.7]{LWW}, any Poisson module can be twisted by a Poisson derivation.
The main result in this section
is that there is a twisted Poincar\'{e} duality $\PH^*(R, M) \cong
\PH_{n-*}(R, M_t)$ for any Poisson $R$-module $M$, where $M_t$ is the
twisted Poisson module of $M$ twisted by the modular derivation of
$R$. This generalizes the main results in \cite{LR,Zhu,LWW,LWZ20}.
If the Poisson structure of $R$ is unimodular, then it reduces to
the classical Poincar\'{e} duality $\PH^*(R, M) \cong \PH_{n-*}(R, M)$ for any
Poisson $R$-module $M$.

To avoid confusion, let $\partial^M$ and $\delta_M$ denote the differentials of the Poisson cochain complex and Poisson chain complex of $R$ with values in $M$ respectively.
To simplify, let $\partial=\partial^R$ and $\delta=\delta_R$.


\begin{lem} \label{diff-codiff1}
Let $M$ be a  right Poisson $R$-module. For any $ F\in \mathfrak{X}^p(R)$,
$$\iota_F \partial^M  - (-1)^p \partial^M  \iota_F =\iota_{\delta F}: M \otimes_R \Omega^{*}(R) \to M \otimes_R  \Omega^{*-p-1}(R).$$
\end{lem}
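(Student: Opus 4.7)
The plan is to establish the identity by induction on the arity $p$ of $F$, viewing the left-hand side as the graded commutator $[\iota_F,\partial^M]$ of the operators $\iota_F$ (of degree $-p$) and $\partial^M$ (of degree $-1$) acting on $M\otimes_R \Omega^*(R)$. This is the direct analogue, for smooth Poisson algebras, of Koszul's formula $[\iota_F,\partial]=\iota_{[\pi,F]_S}$ in Poisson geometry.

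For the \emph{base case} $p=0$, one has $F=a\in R$ and $\iota_a$ acts as multiplication by $a$; the defining formula \eqref{poison-cochain-complex} of $\delta^0$ gives $\delta^0 a=-H_a\in\mathfrak{X}^1(R)$. Evaluating on $m\otimes\de\!a_1\wedge\cdots\wedge\de\!a_q$, the Poisson module axiom $\{ma,b\}_M=\{m,b\}_M a+m\{a,b\}$ shows that the ``$\{m,a_i\}_M$'' contributions to $\iota_a\partial^M$ and $\partial^M\iota_a$ cancel, and the ``$m\otimes\de\!\{a_i,a_j\}$'' terms, which are unaffected by multiplication by $a$, also cancel. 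What survives is precisely $\sum_i(-1)^i m\{a,a_i\}\otimes \de\!a_1\wedge\cdots\widehat{\de\!a_i}\cdots\wedge\de\!a_q$, which by Definition \ref{ctr0} equals $\iota_{-H_a}(m\otimes\de\!a_1\wedge\cdots\wedge\de\!a_q)=\iota_{\delta a}(m\otimes\de\!a_1\wedge\cdots\wedge\de\!a_q)$.

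For the \emph{inductive step}, assume the identity at all arities less than $p$. Since both sides are $R$-linear in $F$ and, by Corollary \ref{smooth duality}, $\mathfrak{X}^p(R)\cong\wedge^p_R\der(R)$ is spanned by wedge products of derivations, we may reduce to the case $F=\xi\wedge G$ with $\xi\in\mathfrak{X}^1(R)$ and $G\in\mathfrak{X}^{p-1}(R)$. Proposition \ref{ctr com} gives $\iota_F=\iota_G\iota_\xi$, and the graded Leibniz rule for graded commutators (with the sign $(-1)^{|\iota_\xi|\,|\partial^M|}=-1$) yields
$$
[\iota_F,\partial^M]=\iota_G\,[\iota_\xi,\partial^M]-[\iota_G,\partial^M]\,\iota_\xi.
$$
Substituting the inductive hypothesis at arities $1$ and $p-1$ and applying Proposition \ref{ctr com} rewrites this as $\iota_{\delta\xi\wedge G}-\iota_{\xi\wedge\delta G}$. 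The proof is concluded by verifying the Schouten-type Leibniz rule $\delta(\xi\wedge G)=\delta\xi\wedge G-\xi\wedge\delta G$ directly from the defining formulas of $\delta$ and $\wedge$, which identifies the expression above with $\iota_{\delta F}$.

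The principal obstacle is bookkeeping the signs that arise from three independent sources: the $(-1)^p$ in the graded commutator, the super-commutation $\iota_F\iota_G=(-1)^{pq}\iota_G\iota_F$ of contractions from Proposition \ref{ctr com}, and the Leibniz sign for $\delta$ on the wedge product. The Leibniz rule for $\delta$ itself is essentially the statement that $(\mathfrak{X}^*(R),\wedge,[\,\cdot\,,\,\cdot\,]_S)$ carries a Gerstenhaber-algebra structure with differential $\delta=[\pi,-]_S$, so an alternative route would be to check this rule once and for all at the level of the Schouten bracket with the Poisson bivector $\pi$; either way, it is the main computational piece.
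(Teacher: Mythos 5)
Your route is genuinely different from the paper's: the paper proves the identity by brute force, expanding $\iota_F\partial^M\omega$, $\partial^M\iota_F\omega$ and $\iota_{\delta F}\omega$ on $\omega=m\otimes\de\!a_1\wedge\cdots\wedge\de\!a_q$ into sums over $(p,1,q-p-1)$-, $(1,p,q-p-1)$-, $(2,p-1,q-p-1)$- and $(p,2,q-p-2)$-shuffles and matching terms, with no induction and no use of smoothness. Your reduction is structurally sound where it applies: the graded-commutator Leibniz rule $[\iota_G\iota_\xi,\partial^M]=\iota_G[\iota_\xi,\partial^M]-[\iota_G,\partial^M]\iota_\xi$ is correct with the stated sign, the composition rule $\iota_{A\wedge B}=\iota_B\iota_A$ from Proposition \ref{ctr com} converts $\iota_G\iota_{\delta\xi}-\iota_{\delta G}\iota_\xi$ into $\iota_{\delta\xi\wedge G-\xi\wedge\delta G}$, and the Leibniz rule you invoke is exactly Lemma \ref{wedge cocycle} with $p=1$, whose proof is independent of the present lemma. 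Two remarks: the reduction to decomposable $F$ needs only additivity in $F$ (not $R$-linearity, which fails for $\iota_{\delta F}$ since $\delta$ is not $R$-linear), and it relies on $\mathfrak{X}^p(R)\cong\wedge^p_R\der(R)$, i.e.\ on the smoothness hypothesis of the section — the paper's computation does not, so your argument is slightly less general than the statement could be.

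The genuine gap is the base of the induction. Establishing $p=0$ does not start the recursion: for $p=1$ a derivation $\xi$ is not a wedge of lower-arity multiderivations in any useful sense (writing $F=\xi\wedge a=a\xi$ with $a\in\mathfrak{X}^0(R)$ makes the inductive step presuppose $[\iota_\xi,\partial^M]=\iota_{\delta\xi}$, which is precisely the arity-$1$ claim), so the case $p=1$ must be verified by a direct computation. That computation is not free: on $m\otimes\de\!a_1\wedge\cdots\wedge\de\!a_q$ one must check that
$$\iota_\xi\partial^M+\partial^M\iota_\xi=\iota_{\delta\xi},\qquad (\delta\xi)(a\wedge b)=\{\xi(a),b\}-\{\xi(b),a\}-\xi(\{a,b\}),$$
which uses the Poisson-module axioms and the derivation property of $\xi$ in the same way the paper's general argument does (it is essentially the paper's proof specialized to $p=1$). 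Once you supply that case, your induction closes correctly; without it, the argument does not get off the ground.
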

\begin{proof}
If $q\leq p$, then, obviously, for any $\omega \in M \otimes_R \Omega^{q}(R)$, $\iota_F(\partial^M \omega), \partial^M (\iota_F \omega)$ and $\iota_{\delta F}(\omega)$ are all $0$.
Now suppose $q>p$ and $\omega=m\otimes \de\!a_1\wedge \cdots \wedge \de\!a_q$. Then,
\begin{align*}
&\iota_F(\partial^M \omega)\\
=&\iota_F(\sum_{i=1}^q(-1)^{i-1}\{m, a_i\}_M\otimes \de\!a_1\wedge \cdots \widehat{\de\!a_i} \cdots \wedge \de\!a_q) \\
& {}+\iota_F(\sum_{1\le i<j\le q}(-1)^{i+j}m \otimes \de \{a_i, a_j\}\wedge \de\!a_1\wedge \cdots  \widehat{\de\!a_i} \cdots  \widehat{\de\!a_j} \cdots \wedge \de\!a_q)\\
=&\sum\limits_{\sigma\in S_{1,p,q-p-1}} \sgn(\sigma)\{m, a_{\sigma(1)}\}_M F(a_{\sigma(2)}\wedge\cdots\wedge a_{\sigma(p+1)})\otimes \de\! a_{\sigma(p+2)}\wedge \cdots\wedge\de\! a_{\sigma(q)}\\
& -\sum\limits_{\sigma\in S_{2,p-1,q-p-1}}\sgn(\sigma)\, m F(\{a_{\sigma(1)}, a_{\sigma(2)}\}\wedge a_{\sigma(3)}\wedge\cdots\wedge a_{\sigma(p+1)}) \otimes\\
& \quad  \quad  \quad\de\! a_{\sigma(p+2)}\wedge \cdots\wedge\de\! a_{\sigma(q)}\\
& -\sum\limits_{\sigma\in S_{p,2,q-p-2}}(-1)^p\sgn(\sigma)\,m F(a_{\sigma(1)}\wedge \cdots\wedge a_{\sigma(p)})\otimes\\
 &\quad \quad \quad \de \{a_{\sigma(p+1)}, a_{\sigma(p+2)}\}\wedge \de\!a_{\sigma(p+3)}\wedge \cdots\wedge\de\! a_{\sigma(q)},
\end{align*}
\begin{align*}
&\partial^M (\iota_F \omega)\\
=&\partial^M \big(\sum\limits_{\sigma\in S_{p,q-p}}
 \sgn(\sigma)\, m F(a_{\sigma(1)}\wedge\cdots\wedge a_{\sigma(p)})\otimes
 \de\! a_{\sigma(p+1)}\wedge \de\! a_{\sigma(p+2)}\wedge\cdots\wedge\de\! a_{\sigma(q)}\big)\\
 =&\sum\limits_{\sigma\in S_{p,1,q-p-1}}
 \sgn(\sigma)\{m F(a_{\sigma(1)}\wedge\cdots\wedge a_{\sigma(p)}), a_{\sigma(p+1)}\}_M\otimes
 \de\! a_{\sigma(p+2)}\wedge \cdots\wedge\de\! a_{\sigma(q)}\\
&-\sum\limits_{\sigma\in S_{p,2,q-p-2}} \sgn(\sigma)\,m F(a_{\sigma(1)}\wedge\cdots\wedge a_{\sigma(p)})\otimes\\
&\quad \quad \de\!\{ a_{\sigma(p+1)}, a_{\sigma(p+2)}\}\wedge \de\! a_{\sigma(p+3)}\wedge \cdots\wedge\de\! a_{\sigma(q)}\\
 =&\sum\limits_{\sigma\in S_{p,1,q-p-1}}
 \sgn(\sigma)\,m \{F(a_{\sigma(1)}\wedge\cdots\wedge a_{\sigma(p)}), a_{\sigma(p+1)}\}\otimes
 \de\! a_{\sigma(p+2)}\wedge \cdots\wedge\de\! a_{\sigma(q)}\\
& + \sum\limits_{\sigma\in S_{p,1,q-p-1}}
 \sgn(\sigma)\{m, a_{\sigma(p+1)}\}_M F(a_{\sigma(1)}\wedge\cdots\wedge a_{\sigma(p)})\otimes
 \de\! a_{\sigma(p+2)}\wedge \cdots\wedge\de\! a_{\sigma(q)}\\
&-\sum\limits_{\sigma\in S_{p,2,q-p-2}}
 \sgn(\sigma)\,m F(a_{\sigma(1)}\wedge\cdots\wedge a_{\sigma(p)})\otimes\\
 &\quad \quad \de\!\{ a_{\sigma(p+1)}, a_{\sigma(p+2)}\}\wedge
 \de\! a_{\sigma(p+3)}\wedge \cdots\wedge\de\! a_{\sigma(q)}\\
 =&(-1)^p\sum\limits_{\sigma\in S_{1,p,q-p-1}}
 \sgn(\sigma)\,m \{F(a_{\sigma(2)}\wedge\cdots\wedge a_{\sigma(p+1)}), a_{\sigma(1)}\}\otimes
 \de\! a_{\sigma(p+2)}\wedge \cdots\wedge\de\! a_{\sigma(q)}\\
& + (-1)^p\sum\limits_{\sigma\in S_{1,p,q-p-1}}\sgn(\sigma)\{m, a_{\sigma(1)}\}_M F(a_{\sigma(2)}\wedge\cdots\wedge a_{\sigma(p+1)})\otimes \\
&\quad  \quad \de\! a_{\sigma(p+2)}\wedge \cdots\wedge\de\! a_{\sigma(q)}\\
&- \sum\limits_{\sigma\in S_{p,2,q-p-2}}
 \sgn(\sigma)\,m F(a_{\sigma(1)}\wedge\cdots\wedge a_{\sigma(p)})\otimes\\
&\quad \quad \de\!\{ a_{\sigma(p+1)}, a_{\sigma(p+2)}\}\wedge \de\! a_{\sigma(p+3)}\wedge \cdots\wedge\de\! a_{\sigma(q)},
\end{align*}
and
\begin{align*}
&\iota_{\delta F}(\omega)\\
=&\sum\limits_{\sigma\in S_{p+1,q-p-1}}
 \sgn(\sigma)\, m (\delta F)(a_{\sigma(1)}\wedge\cdots\wedge a_{\sigma(p+1)})\otimes
 \de\! a_{\sigma(p+2)}\wedge \cdots\wedge\de\! a_{\sigma(q)}\\
 =&-\sum\limits_{\sigma\in S_{1,p,q-p-1}} \sgn(\sigma)\, m \{F(a_{\sigma(2)}\wedge\cdots\wedge a_{\sigma(p+1)}), a_{\sigma(1)}\}\otimes
 \de\! a_{\sigma(p+2)}\wedge \cdots\wedge\de\! a_{\sigma(q)}\\
 &-\sum\limits_{\sigma\in S_{2,p-1,q-p-1}}\sgn(\sigma)\, m F(\{a_{\sigma(1)}, a_{\sigma(2)}\}\wedge \cdots\wedge a_{\sigma(p+1)}) \otimes \de\! a_{\sigma(p+2)}\wedge \cdots\wedge\de\! a_{\sigma(q)}.
\end{align*}
It follows that  $\iota_F(\partial^M \omega)- (-1)^p \partial^M (\iota_F \omega)=\iota_{(\delta F)}\omega$ for any $\omega \in M \otimes_R \Omega^{q}(R)$.
\end{proof}

Similarly, the following lemma holds.
\begin{lem} \label{diff-codiff2}
Let $M$ be a  right Poisson   $R$-module.
For any $F\in \mathfrak{X}^p(M),$
$$\iota_F \partial -(-1)^p \partial^M \iota_F =\iota_{\delta_M F}:  \Omega^{*}(R) \to M \otimes_R  \Omega^{*-p-1}(R).$$
\end{lem}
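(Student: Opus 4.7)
The plan is to verify the identity by a direct shuffle-sum calculation, closely following the proof of Lemma \ref{diff-codiff1}. By $R$-linearity of both sides it suffices to take $\omega = a_0\,\de a_1 \wedge \cdots \wedge \de a_q$; the case $q < p+1$ is trivial, since all three operators land in $M \otimes_R \Omega^{q-p-1}(R) = 0$. For $q \geq p+1$, I would expand each of $\iota_F(\partial\omega)$, $\partial^M(\iota_F \omega)$, and $\iota_{\delta_M F}(\omega)$ using the formulas for $\partial$, $\partial^M$, $\iota_F$, and $\delta_M F$. This produces shuffle sums indexed by $S_{1,p,q-p-1}$, $S_{2,p-1,q-p-1}$, $S_{p,1,q-p-1}$, and $S_{p,2,q-p-2}$, with four types of coefficients: a ``Hamiltonian'' term $\{a_0, a_\ell\}\,F(\cdots)$ coming from $\iota_F \partial \omega$, an $M$-valued Hamiltonian term $\{F(\cdots)\,a_0,\,a_\ell\}_M$ from $\partial^M \iota_F \omega$, a ``bracket'' term $F(\{a_i, a_j\} \wedge \cdots)\,a_0$ from $\iota_F \partial \omega$ and $\iota_{\delta_M F}(\omega)$, and a ``de Rham'' term $F(\cdots)\,a_0 \otimes \de\{a_i, a_j\} \wedge \cdots$ appearing in both $\iota_F \partial \omega$ and $\partial^M \iota_F \omega$.

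The key algebraic input is axiom (3) of the right Poisson module structure, applied to the $M$-valued terms in $\partial^M(\iota_F \omega)$:
$$\{F(a_{\tau(1)} \wedge \cdots \wedge a_{\tau(p)})\,a_0,\, a_{\tau(p+1)}\}_M = \{F(a_\tau),\,a_{\tau(p+1)}\}_M\,a_0 + F(a_\tau)\,\{a_0,\,a_{\tau(p+1)}\}.$$
Reindexing $\sum_{\tau \in S_{p,1,q-p-1}}$ as $\sum_{\sigma \in S_{1,p,q-p-1}}$ produces a sign $(-1)^p$; after this reindexing the $F(a_\sigma)\,\{a_0, a_{\sigma(1)}\}$ piece (which is equal to $\{a_0, a_{\sigma(1)}\}\,F(a_\sigma)$ since the scalar $\{a_0, a_{\sigma(1)}\} \in R$ commutes with the $R$-action on $M$) cancels the Hamiltonian contribution to $\iota_F \partial \omega$ in the combination $\iota_F \partial - (-1)^p \partial^M \iota_F$. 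The leftover $\{F(a_\sigma), a_{\sigma(1)}\}_M\,a_0$ contribution is exactly the first (Hamiltonian) sum of $\iota_{\delta_M F}(\omega)$. The $S_{p,2,q-p-2}$ de Rham terms appear with matching signs in the two expansions (after multiplication by $(-1)^p$) and cancel. Finally, the $S_{2,p-1,q-p-1}$ terms arising when $\iota_F$ absorbs $\de\{a_i, a_j\}$ into its $p$-part reassemble, via the same shuffle-sign identity used in Lemma \ref{diff-codiff1}, into the second (bracket) sum of $\iota_{\delta_M F}(\omega)$.

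The main obstacle, and the only substantive novelty compared with Lemma \ref{diff-codiff1}, is the correct use of axiom (3) with the roles of $R$ and $M$ interchanged relative to the previous lemma: here the $M$-factor in $\{xa, b\}_M$ is $x = F(a_\tau)$ and the $R$-factor is $a = a_0$, whereas in Lemma \ref{diff-codiff1} the $M$-factor was $m$ and the $R$-factor was the scalar $F(a_\tau)$. The shuffle-sign bookkeeping and the recombination of the $S_{2,p-1,q-p-1}$ and de Rham terms are otherwise identical, so the argument of Lemma \ref{diff-codiff1} transfers line by line.
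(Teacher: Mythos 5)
Your proof is correct and is exactly what the paper intends by its one-line remark that the lemma holds ``similarly'' to Lemma~\ref{diff-codiff1}: the shuffle bookkeeping transfers verbatim, and the only genuinely new step, which you correctly isolate, is applying Poisson-module axiom (3) to $\{F(a_{\sigma(1)}\wedge\cdots\wedge a_{\sigma(p)})\,a_0,\,a_{\sigma(p+1)}\}_M$ with the module factor now being $F(\cdots)$ and the ring factor $a_0$, after which the cancellations (Hamiltonian against Hamiltonian, de Rham against de Rham, and the reassembly of the $S_{2,p-1,q-p-1}$ terms into $\iota_{\delta_M F}$) go through with the same signs. One minor quibble: the reduction to monomials $a_0\,\de a_1\wedge\cdots\wedge\de a_q$ should be justified by $\mathbbm{k}$-linearity (additivity) rather than $R$-linearity, since $\partial$ and $\partial^M$ are not $R$-linear.
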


\begin{lem}  \label{wedge cocycle}
Let $M$ be a  right Poisson   $R$-module. Then for any $F\in \mathfrak{X}^p(R)$ and  $G\in \mathfrak{X}^q(M)$,
$$\delta_M(F\wedge G)=\delta F\wedge G+(-1)^p F\wedge \delta_M G.$$
\end{lem}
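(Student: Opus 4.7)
The plan is to verify the Leibniz-type identity by direct expansion on an arbitrary $(p+q+1)$-tuple of arguments, in the style of the proofs of Lemmas~\ref{diff-codiff1} and \ref{diff-codiff2} above. Take $a_1,\ldots,a_{p+q+1}\in R$ and expand $\delta_M(F\wedge G)(a_1\wedge\cdots\wedge a_{p+q+1})$ using the definition of $\delta_M$: this produces a \emph{bracket sum} $\sum_k(-1)^k\{(F\wedge G)(\cdots\widehat{a_k}\cdots),a_k\}_M$ together with a \emph{structure sum} $\sum_{i<j}(-1)^{i+j}(F\wedge G)(\{a_i,a_j\}\wedge\cdots)$. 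Substitute the shuffle definition of $F\wedge G$ inside each, and apply the Poisson-module Leibniz rule $\{xr,s\}_M=\{x,s\}_M\,r+x\{r,s\}$ in the form
$$\{F(\cdots)\,G(\cdots),a_k\}_M=\{G(\cdots),a_k\}_M\,F(\cdots)+G(\cdots)\,\{F(\cdots),a_k\},$$
thereby splitting the bracket sum into two families of summands on the left-hand side.

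On the other side, expand $(\delta F\wedge G)(a_1\wedge\cdots\wedge a_{p+q+1})$ as a sum over $(p{+}1,q)$-shuffles with $\delta F$ itself expanded by its own bracket and structure sums, and expand $(-1)^p(F\wedge\delta_M G)(a_1\wedge\cdots\wedge a_{p+q+1})$ analogously over $(p,q{+}1)$-shuffles. A term-by-term correspondence emerges, organised by which block (the $F$- or the $G$-block) of a shuffle each distinguished argument $a_k$ or bracketed pair $\{a_i,a_j\}$ sits in: the $\{F(\cdots),a_k\}\,G(\cdots)$ summands on the left match the bracket-type terms of $\delta F\wedge G$; the $F(\cdots)\{G(\cdots),a_k\}_M$ summands match the bracket-type terms of $(-1)^p F\wedge\delta_M G$; and the structure summands partition between $\delta F\wedge G$ and $(-1)^p F\wedge\delta_M G$ according to whether both $a_i$ and $a_j$ end up in the $F$-block or in the $G$-block.

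The main obstacle is the sign bookkeeping across this correspondence. For each matched pair of summands one must compare the LHS sign, of the form $(-1)^k\,\sgn(\tau)$ with $\tau\in S_{p,q}$ acting on the $p+q$ arguments remaining after $a_k$ is deleted, with the RHS sign, of the form $(-1)^{k'}\,\sgn(\sigma)$ with $\sigma\in S_{p+1,q}$ and $k'$ the position of $a_k$ in the $(p{+}1)$-block of $\sigma$; a standard inversion count shows these agree, and a parallel computation—with an extra global sign $(-1)^p$ absorbing the displacement of $\delta_M G$ past the $p$-fold factor $F$—handles both the $F\wedge\delta_M G$ side and the matching of structure terms. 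Under the smoothness hypothesis governing Section~3 one may alternatively reduce to generators via Corollary~\ref{smooth duality}, first treating $M=R$ inductively from the cases $F\in R$ and $F\in\der(R)$ and then bootstrapping to arbitrary $M$; but no ingredients beyond those already present in Lemmas~\ref{diff-codiff1} and \ref{diff-codiff2} are needed for the direct proof.
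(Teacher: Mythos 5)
Your direct term-by-term expansion is exactly the verification the paper has in mind: its proof of Lemma~\ref{wedge cocycle} consists of the single remark that the identity ``is easy to check from the definitions,'' with a pointer to \cite[Proposition 3.7]{LPV}. Your organisation of the matching (splitting the bracket sum via the module Leibniz rule, and sorting shuffle terms by which block receives the distinguished argument or the bracketed pair) is the standard and correct way to carry out that check, so the approaches coincide.
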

\begin{proof}
It is easy to check from the definitions. The reader may also refer to \cite[Proposition 3.7]{LPV} for more details.
\end{proof}

\begin{rmk}\label{PH-is-Walg}
By Lemma  \ref{wedge cocycle}, $\wedge$ is well-defined on $\PH^*(R)$. Hence, $(\PH^*(R), \wedge)$ is a graded commutative algebra, and
$(\PH^*(R, M), \wedge)$ is a $(\PH^*(R), \wedge)$-module.
\end{rmk}
\begin{lem} \cite[Lemma 3]{LWW1} \label{twist diff}
Let $M$ be a  right Poisson   $R$-module, and $M_\phi$ be the twisted Poisson module of $M$ twisted by a Poisson derivation $\phi$, that is,
$\{m, a\}_{M_\phi} = \{m, a\}_M + m \cdot \phi (a)$ for any $m \in M$ and $a \in R$ (See \cite[Proposition 2.7]{LWW}).
Then $$\delta_{M_{\phi}}=\delta_{M}-(\phi \wedge\text{-}) \quad \textrm{and} \quad \partial^{M_{\phi}}=\partial^{M}+\iota_{\phi}.$$
\end{lem}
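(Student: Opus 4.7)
The plan is to verify both identities by direct computation from the defining formulas of $\delta$ and $\partial$, exploiting the fact that the twist only affects the Poisson-module bracket and not the multiplication, so all differences cleanly isolate as terms involving $\phi(a_i)$.

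For the cochain formula $\delta_{M_\phi} = \delta_M - (\phi \wedge \text{-})$, I would fix $F \in \mathfrak{X}^p(M) = \mathfrak{X}^p(M_\phi)$ and apply the defining formula of the Poisson coboundary to $F$ with bracket $\{-,-\}_{M_\phi}$ in place of $\{-,-\}_M$. The sum $\sum_{i} (-1)^i F(\cdots\widehat{a_i}\cdots)\cdot\phi(a_i)$ is all that survives on subtracting $\delta_M(F)$, since the ``$\{a_i,a_j\}$-terms" are intrinsic to $R$ and thus unchanged. Next, using $\phi \in \mathfrak{X}^1(R)$ and Definition~\ref{F-wedge-G} applied to $\phi \wedge F \in \mathfrak{X}^{p+1}(M)$, the shuffle sum over $S_{1,p}$ collapses to $\sum_{i=1}^{p+1} (-1)^{i-1} \phi(a_i) F(a_1\wedge\cdots\widehat{a_i}\cdots\wedge a_{p+1})$. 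Matching signs yields the desired identity.

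For the chain formula $\partial^{M_\phi} = \partial^M + \iota_\phi$, I would apply the formula for $\partial^{M_\phi}$ to a generator $m \otimes \de a_1 \wedge \cdots \wedge \de a_p$. Again the only change from $\partial^M$ comes from the first sum, producing $\sum_{i=1}^p (-1)^{i-1} m\,\phi(a_i) \otimes \de a_1\wedge\cdots\widehat{\de a_i}\cdots\wedge \de a_p$. On the other side, applying Definition~\ref{ctr0} (the induced contraction on $M \otimes_R \Omega^\ast(R)$) with $F = \phi$, the shuffle set $S_{1,p-1}$ reproduces exactly the same expression, so the two sides agree.

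There is essentially no real obstacle here beyond bookkeeping: the only subtlety is to keep track of the signs and of on which side of the tensor product the scalar $\phi(a_i)$ sits, using commutativity of $R$ to move the scalar past $m$. One should also note that $F \in \mathfrak{X}^p(M)$ and $F \in \mathfrak{X}^p(M_\phi)$ refer to the same underlying multilinear map (since the underlying $R$-module structure is unchanged under the twist), so writing $\delta_{M_\phi}(F) - \delta_M(F)$ is meaningful. Both identities then follow from these elementary shuffle-sum computations.
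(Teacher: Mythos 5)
Your computation is correct and is exactly the argument the paper has in mind: the paper's proof of this lemma is the single sentence that it ``follows directly from the definitions of Poisson (co)chain complex and the twisted module structure,'' and your shuffle-sum bookkeeping (the $(1,p)$-shuffle with $\sigma(1)=i$ contributing $(-1)^{i-1}$, matched against the $(-1)^i$ and $(-1)^{i-1}$ signs in $\delta$ and $\partial$ respectively) is precisely the verification being elided. Nothing further is needed.
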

The proof follows directly from the definitions of Poisson (co)chain complex and the twisted module structure.

\begin{prop} \label{vol cycle}
Let $(R, \pi)$ be a  smooth Poisson  algebra of dimension $n$ with trivial canonical bundle,
$\vol \in \Omega^n(R) $ be a volume form and $\phi_{\vol}$ be the modular derivation
with respect to $\vol$.
Let $R_t$ be the
twisted Poisson module of $R$ twisted by  $\phi_{\vol}$.
Then $\vol$ is a Poisson cycle in the Poisson  complex of $R$ with values in the module $R_t$, i.e., $\partial \vol = -\iota_{\phi_{\vol}} \vol$.
\end{prop}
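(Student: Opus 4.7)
The plan is to verify that $\partial^{R_t}(\vol) = 0$ by peeling off the twist. By Lemma \ref{twist diff} applied to $\phi = \phi_{\vol}$, one has $\partial^{R_t} = \partial + \iota_{\phi_{\vol}}$, so the assertion is equivalent to $\partial\vol = -\iota_{\phi_{\vol}}\vol$. Using the identity $\partial = [\iota_\pi,\de] = \iota_\pi\de - \de\iota_\pi$ on $\Omega^*(R)$ (recorded just after \eqref{poison-chain-complex}) together with $\de\vol = 0$ (which holds because $\Omega^{n+1}(R) = 0$), the computation collapses to
\begin{equation*}
\partial\vol = -\de\iota_\pi(\vol),
\end{equation*}
so the entire claim reduces to the $(n-1)$-form identity $\de\iota_\pi(\vol) = \iota_{\phi_{\vol}}(\vol)$.

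To establish this identity, I would test it against $\de\!a$ for an arbitrary $a \in R$. On one hand, Lemma \ref{contraction-formula}(2) gives $\iota_{H_a}(\vol) = -\de\!a \wedge \iota_\pi(\vol)$, and since $\de(\de\!a) = 0$ the Leibniz rule yields $\de\iota_{H_a}(\vol) = \de\!a \wedge \de\iota_\pi(\vol)$. Cartan's formula $\mathscr{L}_{H_a} = [\de,\iota_{H_a}]$ combined with $\de\vol = 0$ gives $\mathscr{L}_{H_a}(\vol) = \de\iota_{H_a}(\vol)$, and by the very definition of the modular derivation this equals $\phi_{\vol}(a)\vol$. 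On the other hand, Lemma \ref{contraction-formula}(1) applied to $F = \phi_{\vol}$ yields $\phi_{\vol}(a)\vol = \de\!a \wedge \iota_{\phi_{\vol}}(\vol)$. Subtracting the two expressions produces, for every $a \in R$,
\begin{equation*}
\de\!a \wedge \bigl(\de\iota_\pi(\vol) - \iota_{\phi_{\vol}}(\vol)\bigr) = 0.
\end{equation*}

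To upgrade this test-wise vanishing to equality of $(n-1)$-forms, I would invoke Theorem \ref{dual iso}: the map $F \mapsto \iota_F(\vol)$ is an $R$-linear isomorphism $\mathfrak{X}^1(R) \xrightarrow{\sim} \Omega^{n-1}(R)$. Writing the bracketed difference as $\iota_F(\vol)$ for a unique $F \in \mathfrak{X}^1(R)$ and applying Lemma \ref{contraction-formula}(1) once more, the displayed vanishing becomes $F(a)\vol = 0$ for all $a \in R$; since $\vol$ generates the rank-one free $R$-module $\Omega^n(R) = R\vol$, this forces $F(a) = 0$ for every $a$, hence $F = 0$ and the desired equality follows. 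The only real obstacle in executing this plan is sign bookkeeping between the Brylinski differential, Cartan calculus, and the two contraction formulas of Lemma \ref{contraction-formula}; all other ingredients have already been assembled in Sections \ref{sec:def.prelimi} and the opening of Section 2.
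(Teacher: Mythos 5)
Your proposal is correct and follows essentially the same route as the paper: reduce via $\partial^{R_t}=\partial+\iota_{\phi_{\vol}}=[\iota_\pi,\de]+\iota_{\phi_{\vol}}$ and $\de\vol=0$ to the identity $\de\iota_\pi(\vol)=\iota_{\phi_{\vol}}(\vol)$, then verify it by testing against $\de\!a$ using Lemma \ref{contraction-formula} and the definition of $\phi_{\vol}$, with Theorem \ref{dual iso} supplying the nondegeneracy needed to conclude. The only cosmetic difference is that the paper applies the inverse duality map $\iota_{(-)}\vol^*$ to $\de\iota_\pi(\vol)$ and identifies the result with $\phi_{\vol}$ directly, whereas you form the difference of the two $(n-1)$-forms and show it vanishes; the sign bookkeeping in your outline is consistent.
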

\begin{proof}
By Lemma  \ref{twist diff}, $\partial^{R_t}=\partial+\iota_{\phi_{\vol}}=[\iota_{\pi}, \de]+\iota_{\phi_{\vol}}.$
Then $\partial^{R_t}(\vol)=-\de \iota_{\pi}(\vol)+\iota_{\phi_{\vol}}(\vol)\in  \Omega^{n-1}(R)$.
It suffices to prove $\iota_{\de\iota_{\pi}(\vol)} \vol^*=\phi_{\vol}\in \mathfrak{X}^1(R)$ by Theorem  \ref{dual iso}.

For any $a\in R$, by Lemma  \ref{contraction-formula}, $\iota_{H_a}(\vol)=-\de\! a\wedge
\iota_{\pi}(\vol)$.
Then $\de \iota_{H_a}(\vol)=\de\! a\wedge
\de \iota_{\pi}(\vol)$. So
\begin{align*}
(\iota_{\de\iota_{\pi}(\vol)} \vol^* ) (a)\vol=&\vol^*(\de\!a \wedge \de\iota_{\pi}(\vol))\vol &\mbox{(by Definition  \ref{ctr2})}\\
=& \de\!a \wedge \de\iota_{\pi}(\vol)&\\
=&\de \iota_{H_a}(\vol)&\\
=&\phi_{\vol}(a)\vol.&\mbox{(by Definition  \ref{modular4})}
\end{align*}
 Hence  $(\iota_{\de\iota_{\pi}(\vol)} \vol^* ) (a)=\phi_{\vol}(a)$. Since $a$ is arbitrary, $\iota_{\de\iota_{\pi}(\vol)} \vol^*=\phi_{\vol}$, which ends the proof.
\end{proof}

\begin{thm}\label{comm diag1}
Let $R$ be a smooth Poisson  algebra of dimension $n$ with trivial
canonical bundle, $\vol \in \Omega^n(R) $ be a volume form and
$\phi_{\vol}$ be the modular derivation of $R$ with respect to
$\vol$. Let $M$ be a Poisson $R$-module and  $M_t$ be the twisted
Poisson module of $M$ twisted by  $\phi_{\vol}$, i.e.,
\begin{equation}\label{twist-module}
\{m, a\}_{M_t} = \{m, a\}_M + m \cdot \phi_{\vol}(a),  \, \forall
\,m \in M, a \in R.
\end{equation}
Then the following diagram 
$$
\xymatrix{
   \mathfrak{X}^p(M) \ar[d]_{\dag^p_M}^{\cong} \ar[r]^{{\delta}_M}
        &   \mathfrak{X}^{p+1}(M)  \ar[d]^{\dag^{p+1}_M}_{\cong}    \\
  M_t \otimes_R \Omega^{n-p}(R)    \ar[r]^{{\partial}^{M_t}}
        & M_t \otimes_R \Omega^{n-p-1}(R)
     }
$$
is commutative, where $\dag^p_M=(-1)^{\frac{p(p+1)}{2}}\ddag^p_M$, and $\ddag^p_M$ is the isomorphism in Theorem  \ref{dagM and dagR}.
\end{thm}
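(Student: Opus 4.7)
The plan is to reduce the commutativity of the diagram to a short calculation that combines three earlier results: the twisted differential formula from Lemma \ref{twist diff}, the Cartan-type identity from Lemma \ref{diff-codiff2}, and the fact from Proposition \ref{vol cycle} that $\vol$ becomes a cycle after twisting. All of the genuine work has already been done in those lemmas; what remains is bookkeeping of signs produced by the normalization $\dag^p_M = (-1)^{p(p+1)/2}\ddag^p_M$.

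More precisely, I would fix $F \in \mathfrak{X}^p(M)$ and first evaluate the identity of Lemma \ref{diff-codiff2} on the volume form $\vol \in \Omega^n(R)$, obtaining
\begin{equation*}
\iota_F(\partial \vol) - (-1)^p\partial^M(\iota_F(\vol)) = \iota_{\delta_M F}(\vol).
\end{equation*}
Next I would substitute $\partial \vol = -\iota_{\phi_{\vol}}(\vol)$ from Proposition \ref{vol cycle}, and use Proposition \ref{ctr com} (with $G = \phi_{\vol} \in \mathfrak{X}^1(R)$) to commute the two contractions, yielding $\iota_F \iota_{\phi_{\vol}}(\vol) = (-1)^p \iota_{\phi_{\vol}}(\iota_F \vol)$. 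After factoring out $-(-1)^p$ this gives
\begin{equation*}
\iota_{\delta_M F}(\vol) = (-1)^{p+1}\bigl(\iota_{\phi_{\vol}} + \partial^M\bigr)(\iota_F\vol).
\end{equation*}

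By Lemma \ref{twist diff} the operator in parentheses is exactly $\partial^{M_t}$, so I obtain the clean identity $\partial^{M_t}(\iota_F\vol) = (-1)^{p+1}\iota_{\delta_M F}(\vol)$, that is $\partial^{M_t} \ddag^p_M = (-1)^{p+1}\ddag^{p+1}_M \delta_M$. Multiplying both sides by the normalizing sign $(-1)^{p(p+1)/2}$ on the left and checking the arithmetic
\begin{equation*}
\frac{p(p+1)}{2} + (p+1) = \frac{(p+1)(p+2)}{2}
\end{equation*}
converts this to $\partial^{M_t} \dag^p_M = \dag^{p+1}_M \delta_M$, which is the claimed commutativity.

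The main obstacle is really just making sure the three sign sources line up: the $(-1)^p$ produced by Lemma \ref{diff-codiff2}, the $(-1)^p$ produced by swapping $\iota_F$ and $\iota_{\phi_{\vol}}$, and the triangular-number shift between $\dag$ and $\ddag$. Nothing about this step is deep; it is the payoff of the careful choices of contraction conventions set up in Section 1 together with Proposition \ref{vol cycle}, which does the substantive geometric work of encoding the modular derivation as the failure of $\vol$ to be a Poisson cycle.
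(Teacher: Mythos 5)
Your proof is correct and follows essentially the same route as the paper's: both arguments combine Lemma \ref{twist diff}, Lemma \ref{diff-codiff2} evaluated on $\vol$, Proposition \ref{vol cycle}, and the commutation rule of Proposition \ref{ctr com} to arrive at $\partial^{M_t}\ddag^p_M=(-1)^{p+1}\ddag^{p+1}_M\delta_M$, after which the triangular-number sign check is the same. The only difference is presentational (you rearrange the Cartan identity rather than expanding $\partial^{M_t}(\iota_F\vol)$ directly), and you in fact make the final sign normalization more explicit than the paper does.
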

\begin{proof}
For any $F\in \mathfrak{X}^p(M)$, $\ddag^p_M(F)=\iota_F \vol$,
\begin{align*}
&\partial^{M_t}(\iota_F \vol)\\
=&\partial^{M}(\iota_F \vol)+\iota_{\phi_{\vol}}(\iota_F \vol)&\mbox{(by Lemma  \ref{twist diff})}\\
=&(-1)^p\big(\iota_F(\partial \vol)-\iota_{(\delta_M F)}\vol\big)+\iota_{\phi_{\vol}}(\iota_F \vol)&\mbox{(by Lemma  \ref{diff-codiff2})}\\
=&(-1)^{p-1}\iota_F(\iota_{\phi_{\vol}} \vol)+(-1)^{p-1}\iota_{(\delta_M F)}\vol+\iota_{\phi_{\vol}}(\iota_F \vol)&\mbox{(by Lemma  \ref{vol cycle})}\\
=&(-1)^{p-1}\iota_{(\delta_M F)}\vol &\mbox{(by Proposition  \ref{ctr com})}\\
=&(-1)^{p-1}\ddag^p_M(\delta_M F).
\end{align*}
\end{proof}

The following theorem  follows from Theorems \ref{dagM and dagR} and \ref{comm diag1} directly.

\begin{thm}\label{main-theorem1}
Let $R$ be a smooth Poisson algebra of dimension $n$ with trivial
canonical bundle,  $M$ be a Poisson $R$-module and  $M_t$ be the twisted
Poisson module of $M$ twisted by  the modular derivation.
Then for any $p \in \mathbb{N}$,  $$\PH_p(R, M_t)\cong \PH^{n-p}(R, M).$$
\end{thm}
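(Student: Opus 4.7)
The plan is to observe that Theorem~\ref{main-theorem1} is essentially a direct corollary of the two preceding results, so the job is to assemble them rather than to do any new calculation. Specifically, I would package the family of maps $\{\dag^p_M\}_{p\ge 0}$ into a single morphism of chain complexes between the Poisson cochain complex of $R$ with values in $M$ and the Poisson chain complex of $R$ with values in $M_t$, shifted by $n$, and then read off the isomorphism on (co)homology.

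First I would note that, by Theorem~\ref{dagM and dagR}, each $\ddag^p_M$ (and hence $\dag^p_M = (-1)^{p(p+1)/2}\ddag^p_M$) is an isomorphism of $R$-modules; since $\Omega^{n-p}(R)=0$ for $p<0$ or $p>n$, this automatically collapses both complexes into the correct range. Next I would invoke Theorem~\ref{comm diag1}, which says the square
\[
\xymatrix{
\mathfrak{X}^p(M)\ar[r]^{\delta_M}\ar[d]_{\dag^p_M} & \mathfrak{X}^{p+1}(M)\ar[d]^{\dag^{p+1}_M}\\
M_t\otimes_R\Omega^{n-p}(R)\ar[r]^{\partial^{M_t}} & M_t\otimes_R\Omega^{n-p-1}(R)
}
\]
commutes for every $p$. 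The sign adjustment $(-1)^{p(p+1)/2}$ is precisely what converts the relation $\partial^{M_t}\ddag^p_M = (-1)^{p-1}\ddag^{p+1}_M\delta_M$ coming from the proof of Theorem~\ref{comm diag1} into strict commutativity; the only thing worth checking explicitly is the elementary identity $(p+1)(p+2)/2 \equiv p(p+1)/2 + (p-1)\pmod 2$, which I would record in a single line.

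Consequently $\{\dag^p_M\}_p$ is a degree-$n$ isomorphism of complexes from $(\mathfrak{X}^\ast(M),\delta_M)$ to $(M_t\otimes_R\Omega^{n-\ast}(R),\partial^{M_t})$. Passing to homology, the $(n-p)$-th Poisson cohomology of $R$ with values in $M$ is identified with the $p$-th Poisson homology of $R$ with values in $M_t$, which is exactly the claimed duality $\PH_p(R,M_t)\cong\PH^{n-p}(R,M)$. There is no real obstacle here: all the difficulty has been shifted into Theorem~\ref{dagM and dagR} (where the explicit form of $\ddag^p_M$ as a contraction with $\vol$ was identified and shown to be an isomorphism via the dual basis for $\Omega^1(R)$) and Theorem~\ref{comm diag1} (where Lemma~\ref{diff-codiff2} and Proposition~\ref{vol cycle} supplied the intertwining of differentials). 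The only minor care needed is to confirm the sign convention so that the isomorphism of complexes is genuine, not merely up to sign.
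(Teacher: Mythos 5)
Your proposal is correct and matches the paper exactly: the paper states that Theorem \ref{main-theorem1} ``follows from Theorems \ref{dagM and dagR} and \ref{comm diag1} directly,'' which is precisely your assembly of the maps $\dag^p_M$ into an isomorphism of complexes. Your sign check $(p+1)(p+2)/2 - p(p+1)/2 = p+1 \equiv p-1 \pmod 2$ is a valid (and welcome) verification of a detail the paper leaves implicit.
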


\begin{rmk}\label{unimodular-com-dia} For a unimodular  Poisson algebra  $R$,
its modular derivation $\phi_{\vol}$ may be not zero but a log-Hamiltonian derivation. Then we can choose another volume form $\vol' \in \Omega^n(R) $ such that the modular derivation of $R$ with respect to $\vol'$ is zero. Hence, the diagram in this case
$$
\xymatrix{
   \mathfrak{X}^p(M) \ar[d]_{\dag'^p_M}^{\cong} \ar[r]^{{\delta}_M}
        &   \mathfrak{X}^{p+1}(M)  \ar[d]^{\dag'^{p+1}_M}_{\cong}    \\
  M \otimes_R \Omega^{n-p}(R)    \ar[r]^{{\partial}^{M}}
        & M \otimes_R \Omega^{n-p-1}(R)
     }
$$
 is commutative, where $\dag'_M$ is induced by $\vol'$. It follows that  $$\PH_p(R, M)\cong \PH^{n-p}(R, M)$$ for any $p \in \mathbb{N}$.
\end{rmk}

\section {Batalin-Vilkovisky structure on Poisson cohomology}
In this section, we study the Batalin-Vilkovisky algebra structure on the Poisson cochain complex and  its cohomology, where the Batalin-Vilkovisky operator is induced from the de Rham differential on the Poisson chain complex via Poincar\'{e} duality. We first list some definitions about Gerstenhaber algebras and  Batalin-Vilkovisky algebras. 
\subsection{Gerstenhaber algebra and  Batalin-Vilkoviksy algebra}
%
\begin{defn} 
A {\it Gerstenhaber algebra} is a  graded-commutative  algebra
$(G=\bigoplus_{i\in \mathbb{Z}} G_i, \cdot)$ together with a bracket 
$$[-, -]: G\times G\rightarrow G,$$ called the Gerstenhaber bracket of $G$, such that 
\begin{enumerate}
\item the shift $G(1)$ is a graded Lie algebra of degree $0$;
\item for any homogeneous elements $a, b, c \in G$,\begin{equation}\label{derivation-identity}
[a, b \cdot c] = [a, b] \cdot c + (-1)^{(|a|-1)|b|}b \cdot [a, c].
\end{equation}
\end{enumerate}


\end{defn}

\begin{exam}\label{general-G-stru}
  For any commutative algebra $R$, $(\mathfrak{X}^*(R), \wedge, [- ,-]_{\bf SN})$ is a Gerstenhaber algebra (see \cite[Proposition 3.7]{LPV}), where $[- ,-]_{\bf SN}: \mathfrak{X}^p(R) \times \mathfrak{X}^q(R)\rightarrow \mathfrak{X}^{p+q-1}(R)$ is the {\it Schouten-Nijenhuis} bracket : for any $P\in \mathfrak{X}^p(R)$ and $Q\in \mathfrak{X}^q(R)$, \begin{align*}
  &[P, Q]_{\bf SN}(a_1\wedge a_2\wedge \cdots\wedge a_{p+q-1})\\
  =&(-1)^{(p-1)(q-1)}\sum_{\sigma\in S_{q,p-1}}\sgn(\sigma)P(Q(a_{\sigma(1)}\wedge  \cdots\wedge  a_{\sigma(q)})\wedge  a_{\sigma(q+1)}\wedge  \cdots\wedge  a_{\sigma(p+q-1)})\\
  &-\sum_{\sigma\in S_{p,q-1}}\sgn(\sigma)Q(P(a_{\sigma(1)}\wedge  \cdots\wedge a_{\sigma(p)})\wedge  a_{\sigma(p+1)}\wedge  \cdots\wedge  a_{\sigma(p+q-1)}).
\end{align*}
\end{exam}
For an algebra $R$, recall that an element $\pi \in \mathfrak{X}^2(R)$ is a Poisson bracket iff $[\pi,\pi]_{\bf SN}=0$.  And the Poisson cohomology differential $\delta$ is given by the
Schouten bracket $[\pi, -]_{\bf SN}$. Hence, one can obtain the following conclusion. 
\begin{exam}(\cite[Proposition 4.9]{LPV})
For any Poisson algebra $R$, its Poisson cohomology $(\PH^*(R), \wedge, [- ,-]_{\bf SN})$ is also a Gerstenhaber algebra.
\end{exam}

\begin{exam}\label{general-G-stru-2}
Let $R$ be a commutative algebra. Then $(\wedge^*_R(\Der(R)), \wedge, [- ,-])$ is a Gerstenhaber algebras (see
\cite[Section 6.1]{Gin}),
where $[- ,-]$   is defined as follows: for any $p, q \in
\mathbb{N}$,
\begin{gather*}
 [-,-]: \wedge^p_R\der(R) \times \wedge^q_R\der(R)\rightarrow \wedge^{p+q-1}_R\Der(R), \\
[\xi_1\wedge\cdots\wedge\xi_p, \eta_1\wedge\cdots\wedge\eta_q]
 \triangleq  \sum_{i=1}^p \sum_{j=1}^q(-1)^{i+j}[\xi_i,
\eta_j]\wedge\xi_1\cdots\widehat{\xi_i}\cdots\wedge\xi_p\wedge\eta_1\cdots\widehat{\eta_j}\cdots\wedge\eta_q.
\end{gather*}
\end{exam}

\begin{rmk}\label{schouten-bracket-under-identification}
Suppose that $R$ is a smooth algebra. Then
$$(\mathfrak{X}^*(R), \wedge, [- ,-]_{\bf SN})\cong (\wedge^*_R(\Der(R)), \wedge, [- ,-])$$
as Gerstenhaber algebras, where the isomorphism  $\alpha$  is defined in Corollary \eqref{smooth duality}. Moreover, if $\pi \in \mathfrak{X}^2(R)$ is a Poisson bracket, then the following diagram 
 \begin{equation*}
 \xymatrix@C=2cm{
  \wedge_R^p\der(R) \ar[d]_{\alpha} \ar[r]^{[\pi,-]}
         & \wedge_R^{p+1}\der(R) \ar[d]^{\alpha} \\
 \mathfrak{X}^p(R)  \ar[r]^{{\delta}^p=[\pi,-]_{\bf SN}} &   \mathfrak{X}^{p+1}(R)
 }
 \end{equation*}
is commutative for any $p \in
\mathbb{N}$.

\end{rmk}


\begin{defn}
Let $(V, \cdot)$ be a graded-commutative graded algebra. A
{\it Batalin-Vilkovisky operator }$\Delta$ on $V$ is an operator $\Delta:
V \rightarrow  V$ of degree $-1$ such that $\Delta^2=0$ and the
obstruction of $\Delta$ from being a graded-derivation
\begin{align}\label{difference}
[a, b] \triangleq (-1)^{|a|}(\Delta(a\cdot b)-\Delta(a)\cdot b- (-1)^{|a|}a \cdot \Delta(b))
\end{align}
is a graded-derivation, that is, \eqref{derivation-identity} holds.
The triple $(V, \cdot, \Delta)$ is called a {\it Batalin-Vilkovisky
algebra} (BV algebra, for short).
\end{defn}

\begin{rmk}
Any BV algebra is a Gerstenhaber algebra if one
defines the Gerstenhaber bracket by \eqref{difference}.
A Gerstenhaber algebra with the bracket $[-,-]$ is a BV algebra (or said to be exact) if it can be
equipped with an operator  $\Delta$ of degree $-1$ such that $\Delta^2= 0$ and $[-,-]$
measures the deviation of $\Delta$ from being a derivation, i.e.,  \eqref{difference} holds.
\end{rmk}

In the following, we will focus
on the BV algebra structure on Poisson cohomology.

\subsection{BV structure on $\mathfrak{X}^*(R)$}
If $R$ is a smooth algebra with trivial canonical bundle, i.e., $R$ satisfies the condition of Theorem  \ref{comm diag1}, then
one can define an operator $\Delta$ on $\mathfrak{X}^*(R)$ induced by the de Rham differential $\de$ as follows:
\begin{equation}\label{def-of-bv-operator}
\xymatrix{
   \mathfrak{X}^{p}(R) \ar[d]_{\dag_R^{p}} \ar@{-->}[r]^{\Delta}
        &   \mathfrak{X}^{p-1}(R)      \\
  \Omega^{n-p}(R)    \ar[r]^{\de}
        &  \Omega^{n-p+1}(R)\ar[u]_{(\dag_R^{p-1})^{-1}}.
     }
\end{equation}
Then $\Delta$ is of degree $-1$ and  $\Delta^2=0$.  In order to show that $\Delta$ is a BV operator on $\mathfrak{X}^*(R)$, it
suffices to check that the identity \eqref{difference} holds when
$[-,-]$ is the Gerstenhaber bracket defined in Example \ref{general-G-stru-2}. This follows from the  the following Lemmas \ref{con-map-wedge} to \ref{key-lemma}.
\begin{lem}\label{con-map-wedge}
If $P\in \mathfrak{X}^p(R)$, $Q\in \mathfrak{X}^q(R)$ and $\omega\in \Omega^{p+q-1}(R)$, then, in $\mathfrak{X}^1(R)$,
$$\iota_\omega(P\wedge Q)=(-1)^{(p-1)q}\iota_\alpha(P)+(-1)^p\iota_\beta(Q),$$
where $\alpha=\iota_Q(\omega)$, $\beta=\iota_P(\omega)$.
\end{lem}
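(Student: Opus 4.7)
The plan is to reduce the identity to the decomposable case and then expand both sides via the defining shuffle-sums, matching them term by term.

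Since we work in the smooth setting, Corollary \ref{smooth duality} lets me express $P$ and $Q$ as $R$-linear combinations of decomposables $\xi_1\wedge\cdots\wedge\xi_p$ and $\eta_1\wedge\cdots\wedge\eta_q$ with $\xi_i,\eta_j\in\der(R)$. All three operators $\iota_\omega$, $\iota_\alpha$, $\iota_\beta$ appearing in the claim are $R$-linear in the multivector argument (and $\alpha,\beta$ themselves are $R$-linear in $Q,P$ respectively), so it suffices to verify the identity for such decomposable $P$ and $Q$.

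Next I would expand the left-hand side. Since $\iota_\omega(P\wedge Q)\in\mathfrak{X}^1(R)$, Definition \ref{ctr4} collapses the shuffle-sum over $S_{1,p+q-1}$ to $p+q$ terms, indexed by which single derivation is extracted. Splitting this sum according to whether the extracted derivation is some $\xi_i$ (with sign $(-1)^{i-1}$) or some $\eta_j$ (with sign $(-1)^{p+j-1}$) yields
\begin{align*}
\iota_\omega(P\wedge Q)=&\sum_{i=1}^p(-1)^{i-1}\,\omega(\xi_1\wedge\cdots\widehat{\xi_i}\cdots\wedge\xi_p\wedge\eta_1\wedge\cdots\wedge\eta_q)\,\xi_i\\
&+\sum_{j=1}^q(-1)^{p+j-1}\,\omega(\xi_1\wedge\cdots\wedge\xi_p\wedge\eta_1\wedge\cdots\widehat{\eta_j}\cdots\wedge\eta_q)\,\eta_j.
\end{align*}

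To match the first group to $(-1)^{(p-1)q}\iota_\alpha(P)$, I would apply Definition \ref{ctr3}, which gives
$$\alpha(\xi_1\wedge\cdots\widehat{\xi_i}\cdots\wedge\xi_p)=\omega(\eta_1\wedge\cdots\wedge\eta_q\wedge\xi_1\wedge\cdots\widehat{\xi_i}\cdots\wedge\xi_p);$$
moving the block of $q$ derivations past the remaining $p-1$ derivations contributes exactly the sign $(-1)^{(p-1)q}$, and re-expanding $\iota_\alpha(P)$ by Definition \ref{ctr4} reproduces the first summation above. An entirely parallel computation, which requires no nontrivial sign swap since the block of $\xi$'s already stands to the left, matches the second group to $(-1)^p\iota_\beta(Q)$.

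The only real obstacle is the sign bookkeeping, specifically verifying that the block swap of the $q$ $\eta$-derivations past the $p-1$ $\xi$-derivations inside the argument of $\omega$ produces precisely the advertised sign $(-1)^{(p-1)q}$; once this is confirmed the identity is a direct consequence of the definitions.
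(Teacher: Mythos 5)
Your proof is correct and takes essentially the same approach as the paper: the paper also proves the identity by an explicit term-by-term computation from the definitions of the contraction maps after reducing to decomposables (it decomposes $\omega$ as $a_0\de\!a_1\wedge\cdots\wedge\de\!a_{p+q-1}$, whereas you decompose $P$ and $Q$ into wedges of derivations via Corollary~\ref{smooth duality}, which is equally legitimate here since the lemma is used in the smooth setting and all the maps involved are $R$-multilinear in the relevant arguments). The sign bookkeeping you flag does work out: the block swap of the $q$ derivations of $Q$ past the remaining $p-1$ derivations of $P$ contributes exactly $(-1)^{(p-1)q}$, and the shuffle signs $(-1)^{i-1}$ and $(-1)^{p+j-1}$ from Definition~\ref{ctr4} match the two groups of terms as you describe.
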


\begin{proof}
Without loss of generality, suppose $\omega=a_0\de\!a_1\wedge\cdots\wedge\de\!a_{p+q-1}\in
\Omega^{p+q-1}(R)$. Then the equation holds via an explicit computation by definitions.
\end{proof}


\begin{lem}\label{a}
If $\omega \in
\Omega^p(R)$ and $\xi_1, \cdots, \xi_{p+1}\in \mathfrak{X}^1(R)$, then
\begin{align*}
(\xi_1\wedge\cdots\wedge\xi_{p+1})(\de\omega)
=&\sum_{i=1}^{p+1}(-1)^{i-1}\xi_{i}\big((\xi_1\wedge\cdots\widehat{\xi_{i}}\cdots\wedge\xi_{p+1})(\omega)\big)\\
+&\sum_{1\leq i<j\leq p+1}(-1)^{i+j}([\xi_i,\xi_j]
\wedge\xi_1\wedge\cdots\widehat{\xi_i}\cdots\widehat{\xi_j}\cdots\wedge\xi_{p+1})(\omega).
\end{align*}
\end{lem}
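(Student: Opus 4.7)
The plan is to reduce the identity to the generating case for $\omega$, then verify it by a direct computation matching both sides as cofactor expansions of a single determinant, with the cross second-order derivative terms absorbed by the Lie bracket sum.

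First, by $\mathbbm{k}$-linearity in $\omega$, I would reduce to $\omega = a_0\,\de a_1\wedge\cdots\wedge\de a_p$ with $a_0,\ldots,a_p\in R$, in which case $\de\omega = \de a_0\wedge\de a_1\wedge\cdots\wedge\de a_p$. Under the identification of $\xi_1\wedge\cdots\wedge\xi_{p+1}\in\mathfrak{X}^{p+1}(R)$ with the determinant map (see Corollary \ref{smooth duality} and the formula for $(f_1\wedge\cdots\wedge f_p)(\omega)$ recalled at the start of Section~3), the left-hand side becomes the $(p+1)\times(p+1)$ determinant $\det(\xi_i(a_{j-1}))_{1\le i,j\le p+1}$. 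I would then expand this determinant along the column corresponding to $a_0$.

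Second, for the first sum on the right-hand side, note that $(\xi_1\wedge\cdots\widehat{\xi_i}\cdots\wedge\xi_{p+1})(\omega)= a_0 M_i$, where $M_i=\det(\xi_k(a_l))_{k\neq i,\,1\le l\le p}$. Applying the Leibniz rule $\xi_i(a_0 M_i)=\xi_i(a_0)M_i+a_0\xi_i(M_i)$, the subsum $\sum_i(-1)^{i-1}\xi_i(a_0)M_i$ reproduces exactly the cofactor expansion of the LHS obtained in the previous step. Hence the lemma reduces to the cancellation identity
\begin{equation*}
\sum_{i=1}^{p+1}(-1)^{i-1}a_0\,\xi_i(M_i)+\sum_{1\le i<j\le p+1}(-1)^{i+j}\bigl([\xi_i,\xi_j]\wedge\xi_1\wedge\cdots\widehat{\xi_i}\cdots\widehat{\xi_j}\cdots\wedge\xi_{p+1}\bigr)(\omega)=0.
\end{equation*}

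Third, each $\xi_i(M_i)$ expands via the Leibniz rule as a sum over $k\neq i$ and $l\in\{1,\ldots,p\}$ of minors with one entry $\xi_k(a_l)$ replaced by the composite $\xi_i\xi_k(a_l)$. On the other hand, each bracket term, after writing $[\xi_i,\xi_j](a_l)=\xi_i\xi_j(a_l)-\xi_j\xi_i(a_l)$ and expanding the resulting $p\times p$ determinant along the row containing $[\xi_i,\xi_j]$, produces the same second-order compositions $\xi_i\xi_j(a_l)$ and $\xi_j\xi_i(a_l)$. Grouping by the pair $(i,k)$ (or $(i,j)$) and the column $l$, and tracking the sign picked up from moving the bracket row into its ``correct'' position against the sign $(-1)^{i-1}$ in front of $\xi_i(M_i)$, each composite $\xi_i\xi_k(a_l)$ arising from $\xi_i(M_i)$ cancels the corresponding $\xi_i\xi_k(a_l)$-contribution from the bracket sum. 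The main obstacle and essentially the entire content of the proof is this sign bookkeeping, but it is a direct (if tedious) verification, and it is precisely the classical computation underlying the global formula for the exterior derivative.
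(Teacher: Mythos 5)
Your proposal is correct and follows essentially the same route as the paper's proof: reduce to a generator $\omega=a_0\,\de a_1\wedge\cdots\wedge\de a_p$, apply the Leibniz rule to $\xi_i\bigl(a_0 M_i\bigr)$ so that the $\xi_i(a_0)M_i$ terms reassemble into the cofactor expansion of the $(p+1)\times(p+1)$ determinant giving the left-hand side, and cancel the remaining second-order composites $\xi_i\xi_k(a_l)$ against the expansion of the bracket terms. The sign bookkeeping you defer is exactly the computation the paper carries out.
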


\begin{proof} Assume that $\omega=a_1\de\!a_2\wedge\cdots \wedge \de\!a_{p+1} \in
\Omega^p(R).$
Then, by definition,
\begin{align*}
&\sum_{i=1}^{p+1}(-1)^{i-1}\xi_{i}\big((\xi_1\wedge\cdots\widehat{\xi_{i}}\cdots\wedge\xi_{p+1})(\omega)\big)\\
=&\sum_{i=1}^{p+1}(-1)^{i-1} \xi_{i}(a_1) \big((\xi_1\wedge\cdots\widehat{\xi_{i}}\cdots\wedge\xi_{p+1})(a_2\wedge \cdots\wedge a_{p+1})\big)\\
& + \sum_{i=1}^{p+1}(-1)^{i-1} a_1
\xi_{i}\big((\xi_1\wedge\cdots\widehat{\xi_{i}}\cdots\wedge\xi_{p+1}) (a_2\wedge \cdots\wedge a_{p+1})\big)\\
=&(\xi_1\wedge\cdots\wedge\xi_{p+1})(\de\!a_1\wedge\de\!a_2\wedge\cdots \wedge \de\!a_{p+1})\\
& +  \sum_{1\leq j < i\leq p+1}(-1)^{i-1}a_1
\big((\xi_1\wedge\cdots\wedge\xi_i\xi_j\wedge\cdots \widehat{\xi_i}
\cdots \wedge\xi_{p+1})
 (a_2\wedge \cdots\wedge a_{p+1})\big)\\
& +  \sum_{1\leq i < j\leq p+1}(-1)^{i-1}a_1 \big((\xi_1\wedge\cdots
\widehat{\xi_i}\cdots\wedge\xi_i\xi_j\wedge\cdots\wedge\xi_{p+1})
 (a_2\wedge \cdots\wedge a_{p+1})\big)\\
=& (\xi_1\wedge\cdots\wedge\xi_{p+1})(\de \omega) - \sum_{1\leq i<j\leq
p+1}(-1)^{i+j}([\xi_i,\xi_j]\wedge\xi_1\wedge\cdots\widehat{\xi_i}\cdots\widehat{\xi_j}\cdots\wedge\xi_{p+1})(\omega).
\end{align*}
\end{proof}
\begin{lem}\label{main-lemma}
If $P\in \mathfrak{X}^p(R),Q\in \mathfrak{X}^q(R)$ and $\omega\in
\Omega^{p+q-1}(R)$, then
\begin{equation}\label{con-map-SN}
\iota_{[P,Q]}(\omega)=(-1)^{(p-1)(q-1)}\iota_{P}(\de \iota
_Q(\omega))-\iota_Q(\de \iota_P(\omega))+(-1)^p\iota_{P\wedge Q}(\de
\omega).
\end{equation}
\end{lem}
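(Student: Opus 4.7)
The plan is to prove \eqref{con-map-SN} by direct expansion, applying Lemma~\ref{a} to each of the three terms on the right-hand side and tracking the cancellations that leave only the Schouten-Nijenhuis bracket. Since $R$ is smooth, Corollary~\ref{smooth duality} identifies $\mathfrak{X}^p(R)$ with $\wedge^p_R\Der(R)$. Both sides of \eqref{con-map-SN} are $\mathbbm{k}$-linear in $P$ and $Q$, and are $R$-linear in $\omega$ through the contraction maps, so it suffices to treat the case $P = \xi_1 \wedge \cdots \wedge \xi_p$, $Q = \eta_1 \wedge \cdots \wedge \eta_q$, and $\omega = a_0\,\de a_1 \wedge \cdots \wedge \de a_{p+q-1}$, with $\xi_i, \eta_j \in \Der(R)$ and $a_i \in R$.

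Set $\alpha := \iota_Q(\omega) \in \Omega^{p-1}(R)$ and $\beta := \iota_P(\omega) \in \Omega^{q-1}(R)$. I will apply Lemma~\ref{a} three times. First, to the $(p-1)$-form $\alpha$ with the $p$ derivations $\xi_1,\dots,\xi_p$, expanding $\iota_P(\de\alpha) = (\xi_1 \wedge \cdots \wedge \xi_p)(\de\alpha)$ as a sum of single-derivative terms $(-1)^{i-1}\xi_i(\,\cdot\,)$ together with the same-type bracket terms $(-1)^{i+j}[\xi_i,\xi_j]\wedge\cdots$ contracted with $\alpha$. Second, an analogous expansion for $\iota_Q(\de\beta)$ in terms of $\eta_1,\dots,\eta_q$. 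Third, to $\omega$ with all $p+q$ derivations $\xi_1,\dots,\xi_p,\eta_1,\dots,\eta_q$ in this fixed order, expanding $\iota_{P\wedge Q}(\de\omega)$ into single-derivative terms plus bracket terms of three types: $[\xi_i,\xi_j]$, $[\eta_i,\eta_j]$, and the mixed $[\xi_i,\eta_j]$.

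By Definition~\ref{ctr3}, $(\xi_1 \wedge \cdots \widehat{\xi_i} \cdots \wedge \xi_p)(\alpha)$ equals $\omega$ evaluated on $\eta_1 \wedge \cdots \wedge \eta_q \wedge \xi_1 \wedge \cdots \widehat{\xi_i} \cdots \wedge \xi_p$, matching up to a shuffle sign the corresponding contribution arising from the $\xi_i$-derivative term in the expansion of $\iota_{P\wedge Q}(\de\omega)$. With the prefactor $(-1)^{(p-1)(q-1)}$ on $\iota_P(\de\iota_Q\omega)$ and $(-1)^p$ on $\iota_{P\wedge Q}(\de\omega)$ in \eqref{con-map-SN}, the $\xi_i$-derivative terms cancel pairwise; similarly the $\eta_j$-derivative terms in $-\iota_Q(\de\iota_P\omega)$ cancel those in $(-1)^p\iota_{P\wedge Q}(\de\omega)$. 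The same-type $[\xi_i,\xi_j]$ bracket contributions from the third and first expansions cancel, and likewise the $[\eta_i,\eta_j]$ contributions, leaving exactly
$$
\Bigl(\sum_{i,j}(-1)^{i+j}[\xi_i,\eta_j] \wedge \xi_1 \wedge \cdots \widehat{\xi_i} \cdots \wedge \xi_p \wedge \eta_1 \wedge \cdots \widehat{\eta_j} \cdots \wedge \eta_q\Bigr)(\omega) = \iota_{[P,Q]}(\omega),
$$
where the last equality is the Schouten-Nijenhuis bracket formula from Example~\ref{general-G-stru-2}.

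The main obstacle will be the sign bookkeeping: tracking the shuffle signs incurred when reordering the factors $\xi_i,\eta_j$ to match the canonical order used in the expansion of $\iota_{P\wedge Q}(\de\omega)$, and verifying that the three prefactors $(-1)^{(p-1)(q-1)}$, $-1$, and $(-1)^p$ in \eqref{con-map-SN} conspire to produce the precise cancellations of the single-derivative and same-type bracket contributions, leaving only the mixed $[\xi_i,\eta_j]$ terms with the required sign $(-1)^{i+j}$.
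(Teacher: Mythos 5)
Your proposal is correct and follows essentially the same route as the paper: reduce to decomposable $P=\xi_1\wedge\cdots\wedge\xi_p$, $Q=\eta_1\wedge\cdots\wedge\eta_q$, expand via Lemma~\ref{a}, and identify the leftover mixed $[\xi_i,\eta_j]$ terms with $\iota_{[P,Q]}(\omega)$ via Example~\ref{general-G-stru-2}. The paper merely organizes the bookkeeping by expanding $(-1)^p\iota_{P\wedge Q}(\de\omega)$ once and recognizing the $\xi$-derivative plus $[\xi_i,\xi_j]$ groups as $-(-1)^{(p-1)(q-1)}\iota_P(\de\iota_Q\omega)$ (and similarly for the $\eta$-groups), which is the same cancellation you describe; note only that additivity ($\mathbbm{k}$-linearity) in $\omega$, not $R$-linearity, is what justifies reducing to $\omega=a_0\,\de a_1\wedge\cdots\wedge\de a_{p+q-1}$.
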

\begin{proof}
Without loss of generality, suppose
$P=\xi_1\wedge\cdots\wedge\xi_p$,
$Q=\eta_1\wedge\cdots\wedge\eta_q$. Then
\begin{align*}
&(-1)^p\iota_{P\wedge Q}(\de \omega)\\
=&(-1)^p(\xi_1\wedge\cdots\wedge\xi_p\wedge\eta_1\wedge\cdots\wedge\eta_q)(\de \omega)\\
=&\sum_{i=1}^{p}(-1)^{p+i-1}\xi_i\big((\xi_1\wedge\cdots\widehat{\xi_i}\cdots\wedge\xi_p\wedge\eta_1\wedge
\cdots\wedge\eta_q)(\omega)\big)\qquad\qquad\mbox{(by Lemma  \ref{a})}\\
&+\sum_{i=1}^{q}(-1)^{i-1}\eta_i\big((\xi_1\wedge\cdots\wedge\xi_p\wedge\eta_1\wedge
\cdots\widehat{\eta_i}\cdots\wedge\eta_q)(\omega)\big)\\
&+\sum_{1\leq i<j\leq p}(-1)^{p+i+j}([\xi_i,\xi_j]\wedge\xi_1\wedge\cdots\widehat{\xi_i}
\cdots\widehat{\xi_j}\cdots\wedge\xi_p\wedge\eta_1\wedge\cdots\wedge\eta_q)(\omega)\\
&+\sum_{1\leq i<j\leq
q}(-1)^{p+i+j}([\eta_i,\eta_j]\wedge\xi_1\wedge
\cdots\wedge\xi_p\wedge\eta_1\wedge\cdots\widehat{\eta_i}\cdots\widehat{\eta_j}\cdots\wedge\eta_q)(\omega)\\
&+\sum_{i=1}^{p}\sum_{j=1}^{q}(-1)^{i+j}([\xi_i,\eta_j]\wedge\xi_1\wedge
\cdots\widehat{\xi_i}\cdots\wedge\xi_p\wedge\eta_1\wedge\cdots\widehat{\eta_j}\cdots\wedge\eta_q)(\omega)\\
=&-(-1)^{(p-1)(q-1)}\iota_{P}(\de \iota
_Q(\omega))+\iota_Q(\de \iota_P(\omega))\\
&+\sum_{i=1}^{p}\sum_{j=1}^{q}(-1)^{i+j}([\xi_i,\eta_j]\wedge\xi_1\wedge \cdots\widehat{\xi_i}\cdots\wedge\xi_p\wedge\eta_1\wedge\cdots\widehat{\eta_j}\cdots\wedge\eta_q)(\omega)\\
=&-(-1)^{(p-1)(q-1)}\iota_{P}(\de \iota
_Q(\omega))+\iota_Q(\de \iota_P(\omega))+\iota_{[P,Q]}(\omega) \quad\qquad\mbox{(see Example \ref{general-G-stru-2})}.
\end{align*}
\end{proof}



\begin{lem}\label{key-lemma} If $P\in \mathfrak{X}^p(R)$ and $\omega\in \Omega^{p-1}(R)$, then
  $$\iota_\omega(\Delta P)=\Delta(\iota_\omega(P))+\iota_{\de\!\omega}(P).$$
\end{lem}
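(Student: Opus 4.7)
The strategy is to reduce the identity to one in $\Omega^n(R)$ by multiplying through by the volume form. Since $\omega \in \Omega^{p-1}(R)$ and $P \in \mathfrak{X}^p(R)$, each of $\iota_\omega(\Delta P)$, $\Delta(\iota_\omega P)$, $\iota_{\de\omega}(P)$ lies in $R = \mathfrak{X}^0(R)$, so I may test the identity by applying $(-)\cdot \vol$ and comparing inside the top-form module $\Omega^n(R) = R\vol$.

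The main intermediate fact I would prove first is the duality identity
$$\iota_{\iota_\eta F}(\vol) \;=\; \eta \wedge \iota_F(\vol), \qquad F \in \mathfrak{X}^p(R),\ \eta \in \Omega^r(R),\ r \leq p.$$
The case $r=0$ is immediate. For $r=1$, write $\eta = \de\! a$; applying Proposition~\ref{ctr-equality} to $\iota_F(\vol \wedge \de\!a) = 0$ yields $\iota_F(\vol) \wedge \de\!a = (-1)^{n-p}\iota_{\iota_{\de\!a}F}(\vol)$, and anti-commuting $\de\!a$ past $\iota_F(\vol)$ contributes a compensating $(-1)^{n-p}$. The general case follows by induction on $r$ together with the identity $\iota_{\eta' \wedge \de\!a} = \iota_{\de\!a}\iota_{\eta'}$, which is immediate from Definition~\ref{ctr4}.

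Unfolding the definition~\eqref{def-of-bv-operator} of $\Delta$ and the sign $(-1)^{p(p+1)/2}$ in $\dag_R^p$, a short computation ($\tfrac{p(p+1) - (p-1)p}{2} = p$) gives $\iota_{\Delta P}(\vol) = (-1)^p \de\iota_P(\vol)$. Combining this with the duality identity, the three terms become
\begin{align*}
\iota_\omega(\Delta P)\cdot \vol &= \omega \wedge \iota_{\Delta P}(\vol) = (-1)^p\,\omega \wedge \de\iota_P(\vol), \\
\Delta(\iota_\omega P) \cdot \vol &= -\de\iota_{\iota_\omega P}(\vol) = -\de\bigl(\omega \wedge \iota_P(\vol)\bigr), \\
\iota_{\de\omega}(P)\cdot \vol &= \de\omega \wedge \iota_P(\vol),
\end{align*}
where the second line uses the formula for $\Delta$ applied to $\iota_\omega P \in \mathfrak{X}^1(R)$, for which the sign is $(-1)^1 = -1$. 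The Leibniz rule $\de(\omega \wedge \iota_P(\vol)) = \de\omega \wedge \iota_P(\vol) + (-1)^{p-1}\omega \wedge \de\iota_P(\vol)$ then makes the sum of the last two lines equal to $(-1)^p\omega \wedge \de\iota_P(\vol)$, matching the first, which proves the lemma. The only real obstacle is the duality identity; once that is established the rest is bookkeeping with signs and the Leibniz rule for $\de$.
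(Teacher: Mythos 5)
Your proof is correct, and it is essentially the paper's argument viewed from the other side of the duality: the paper writes $P=\iota_\alpha(\vol^*)$ with $\alpha=\iota_P(\vol)$ and applies the Leibniz rule to $\de(\omega\wedge\alpha)$ on the multivector side, whereas you apply $\iota_{(-)}(\vol)$ to all three terms and run the same Leibniz computation on the form side; your key identity $\iota_{\iota_\eta F}(\vol)=\eta\wedge\iota_F(\vol)$ is exactly the transport under Theorem~\ref{dual iso} of the composition rule $\iota_\eta\iota_\alpha=\iota_{\eta\wedge\alpha}$ that the paper uses implicitly. One cosmetic caveat: with the paper's convention (Definition~\ref{ctr2}, where the contracted form is appended on the right) one has $\iota_{\eta'\wedge\de a}=(-1)^{|\eta'|}\iota_{\de a}\iota_{\eta'}$ rather than $\iota_{\de a}\iota_{\eta'}$, but this sign cancels against the one from commuting $\de a$ past $\eta'$ in your induction, so the stated duality identity and the remainder of the proof are unaffected.
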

\begin{proof}
By Theorem  \ref{dual iso}, $P=\iota_\alpha(\vol^*)$ for some $\alpha\in \Omega^{n-p}(R)$.
\begin{align*}
  \Delta(\iota_\omega(P))
  &=\Delta(\iota_{\omega\wedge\alpha}(\vol^*))&\qquad \mbox{ (by Definition  \ref{ctr2})}\\
  &=-\iota_{\de (\omega\wedge\alpha)}(\vol^*) &\qquad \mbox{ (by \eqref{def-of-bv-operator})}\\
  &=-\iota_{\de\!\omega\wedge\alpha}(\vol^*)+(-1)^{p}\iota_{\omega\wedge\de\!\alpha}(\vol^*)\\
  &=-\iota_{\de\!\omega}(P)+\iota_\omega(\Delta P).
\end{align*}
\end{proof}

For the geometric version of Lemma \ref{key-lemma}, see  \cite[Lemma 3.5]{Xu}.


\begin{thm}\label{main thm-BV}
Let $R$ be a smooth Poisson algebra with trivial canonical bundle. The triple $(\mathfrak{X}^*(R),\wedge,\Delta)$ is a BV algebra with the BV operator $\Delta$ given in \eqref{def-of-bv-operator}.
\end{thm}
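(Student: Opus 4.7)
My plan is to verify the two defining axioms of a BV algebra directly. The first, $\Delta^2 = 0$, is immediate: by the defining diagram \eqref{def-of-bv-operator}, $\Delta$ is the de Rham differential $\de$ transported across the $R$-module isomorphisms $\dag_R^p$, so $\de^2 = 0$ yields $\Delta^2 = 0$. The substantive content is the BV identity, that for all $P \in \mathfrak{X}^p(R)$ and $Q \in \mathfrak{X}^q(R)$,
$$
\Phi(P,Q) \;:=\; \Delta(P \wedge Q) - \Delta P \wedge Q - (-1)^p\, P \wedge \Delta Q - (-1)^p\, [P, Q] \;=\; 0,
$$
where $[-,-]$ is the Gerstenhaber bracket of Example \ref{general-G-stru-2}.

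Since $\Phi(P,Q) \in \mathfrak{X}^{p+q-1}(R)$ and the pairing of Lemma \ref{Xp=Hom-p} with $\Omega^{p+q-1}(R)$ is non-degenerate (by Theorem \ref{dual iso}), it suffices to show $\iota_\omega \Phi(P,Q) = 0$ for every $\omega \in \Omega^{p+q-1}(R)$. I would compute $\iota_\omega$ of each summand: first apply Lemma \ref{key-lemma} to $P \wedge Q \in \mathfrak{X}^{p+q}$ to obtain $\iota_\omega \Delta(P \wedge Q) = \Delta(\iota_\omega(P \wedge Q)) + \iota_{\de\omega}(P \wedge Q)$; then expand $\iota_\omega(P \wedge Q)$ as a sum of two $\mathfrak{X}^1$-elements via Lemma \ref{con-map-wedge} and apply Lemma \ref{key-lemma} a second time to each piece. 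Next, rewrite $\iota_\omega(\Delta P \wedge Q)$ and $\iota_\omega(P \wedge \Delta Q)$ as top-degree pairings using the basic identity $\iota_{F \wedge G} = \iota_G \iota_F$, and expand $(-1)^p \iota_\omega[P,Q]$ by Lemma \ref{main-lemma}.

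After collecting terms, the $\iota_{\de\omega}(P \wedge Q)$ contributions from Lemmas \ref{key-lemma} and \ref{main-lemma} cancel; the paired $\iota_{\iota_P \omega}(\Delta Q)$ and $\iota_{\de \iota_P \omega}(Q)$ terms cancel term-by-term; and the two $\iota_{\de \iota_Q \omega}(P)$ terms cancel because $(-1)^{(p-1)q} + (-1)^{p+(p-1)(q-1)} = 0$. The only residue is
$$
(-1)^{(p-1)q}\,\iota_{\iota_Q \omega}(\Delta P) \;-\; \iota_{\iota_{\Delta P} \omega}(Q),
$$
and both summands equal $\iota_{\Delta P \wedge Q}(\omega)$ once one uses $\iota_{F \wedge G} = \iota_G \iota_F$ together with the graded commutativity $Q \wedge \Delta P = (-1)^{(p-1)q}\,\Delta P \wedge Q$ in $\mathfrak{X}^*(R)$. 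The anticipated obstacle is purely combinatorial: one must consistently reconcile the four flavours of contraction (Definitions \ref{ctr0}, \ref{ctr2}, \ref{ctr3}, \ref{ctr4}) with the top-degree pairing identity $\iota_\omega F = \iota_F \omega$, and carefully track the signs through the multiple invocations of Lemma \ref{key-lemma}. The cancellation pattern is the algebraic shadow of P.~Xu's geometric argument referenced after Lemma \ref{key-lemma}.
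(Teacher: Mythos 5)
Your proposal is correct and follows essentially the same route as the paper: reduce the BV identity to vanishing under $\iota_\omega$ for all $\omega\in\Omega^{p+q-1}(R)$ (using $\mathfrak{X}^{p+q-1}(R)\cong\Hom_R(\Omega^{p+q-1}(R),R)$), then combine Lemma \ref{key-lemma}, the decomposition of Lemma \ref{con-map-wedge}, and the Schouten-bracket contraction formula of Lemma \ref{main-lemma}, with $\Delta^2=0$ immediate from $\de^2=0$. The only difference from the paper's write-up is bookkeeping — you cancel terms pairwise where the paper cancels the three $\Delta(\cdot)$ contributions as a block via Lemma \ref{con-map-wedge} before invoking Lemma \ref{main-lemma} — and your sign checks are consistent with that computation.
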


\begin{proof} It suffices to show that for any $P\in \mathfrak{X}^p(R)$, $Q\in \mathfrak{X}^q(R)$,
$$[P,Q]=(-1)^p(\Delta (P\wedge Q)-\Delta P\wedge Q-(-1)^p P\wedge\Delta Q).$$
For any $\omega\in \Omega^{p+q-1}(R)$, by Lemma  \ref{key-lemma},
\begin{align*}
\iota_\omega(\Delta(P\wedge Q))=\Delta(\iota_\omega(P\wedge Q))+\iota_{\de\!\omega}(P\wedge Q).
\end{align*}
If we take  $\alpha=\iota_Q(\omega)$ and $\beta=\iota_P(\omega)$, then by Lemma  \ref{key-lemma} again,
\begin{align*}
\iota_\omega(\Delta P\wedge Q)&=(-1)^{(p-1)q}\iota_\omega(Q\wedge\Delta P)\\
&=(-1)^{(p-1)q}\iota_\alpha(\Delta P)\\
&=(-1)^{(p-1)q}\Delta(\iota_\alpha(P))+(-1)^{(p-1)q}\iota_{\de\!\alpha}(P).
\end{align*}
Similarly,
\begin{align*}
\iota_\omega( P\wedge\Delta Q)=\Delta(\iota_\beta(Q))+\iota_{\de\!\beta}(Q).
\end{align*}
Hence,
\begin{align*}
&\iota_\omega[(-1)^p\Delta (P\wedge Q)-(-1)^p\Delta P\wedge Q- P\wedge\Delta Q)]\\
=&(-1)^p\Delta(\iota_\omega(P\wedge Q))-(-1)^{p+(p-1)q}\Delta(\iota_\alpha(P))-\Delta(\iota_\beta(Q))\\
&+(-1)^p\iota_{\de\!\omega}(P\wedge Q)-(-1)^{p+(p-1)q}\iota_{\de\!\alpha}(P)-\iota_{\de\!\beta}(Q)\\
=&(-1)^p\iota_{\de\!\omega}(P\wedge Q)-(-1)^{p+(p-1)q}\iota_{\de\!\alpha}(P)-\iota_{\de\!\beta}(Q) \, \quad\mbox{ (by Lemma  \ref{con-map-wedge})} \\
=&\iota_\omega([P,Q]). \qquad\qquad\qquad\qquad\qquad\qquad \qquad\qquad\qquad \mbox{ (by Lemma  \ref{main-lemma})}
\end{align*}
The proof is finished.
\end{proof}

There are some interesting results induced by the BV algebra structure on $\mathfrak{X}^*(R)$. By  \cite[Lemma 5]{LWW1}), for any
 $P\in \mathfrak{X}^p(R)$ and $Q\in \mathfrak{X}^q(R)$,
 $$\Delta([P,Q] )=[\Delta(P),Q] -(-1)^p
[P,\Delta(Q)] .$$
Especially, for $P,Q\in \mathfrak{X}^1(R)$,  $$\Delta([P,Q])=P(\Delta(Q))-Q(\Delta(P)).$$

Furthermore,  from \cite[Proposition 5]{LWW1}), for any $P\in \mathfrak{X}^p(R)$, \begin{equation}\label{Detdet+detDet}   (\Delta \delta +  \delta \Delta)(P)=[\Delta(\pi), P] .
\end{equation}

\begin{prop}\label{phi_vol=Delta(pi)}
Let $R$ be a smooth Poisson algebra with trivial canonical bundle,  $\phi_{\vol}$ be its modular derivation, and  $\Delta$ be the operator given in  \eqref{def-of-bv-operator}. Then
\begin{enumerate}
\item
$\phi_{\vol}=\Delta(\pi)$;
 \item   for any $P\in \mathfrak{X}^p(R)$, $  (\Delta \delta +  \delta \Delta)(P)=[\phi_{\vol}, P].$
\end{enumerate}
\end{prop}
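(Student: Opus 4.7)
\medskip

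\noindent\textbf{Proof plan.} Part (2) will be immediate once part (1) is in hand: the identity \eqref{Detdet+detDet} asserts $(\Delta\delta + \delta\Delta)(P) = [\Delta(\pi), P]$ for every $P\in \mathfrak{X}^p(R)$, so substituting $\Delta(\pi) = \phi_{\vol}$ yields exactly the formula in (2). Hence the whole proof reduces to verifying (1).

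For (1), the plan is to unwind the definition of $\Delta$ on the specific element $\pi\in\mathfrak{X}^2(R)$. By \eqref{def-of-bv-operator}, $\Delta(\pi) = (\dag_R^1)^{-1}\bigl(\de\,\dag_R^2(\pi)\bigr)$. Since $\dag_R^p = (-1)^{p(p+1)/2}\ddag_R^p$ and $\ddag_R^p(F) = \iota_F(\vol)$, we have $\dag_R^2(\pi) = -\iota_\pi(\vol)$ and $\dag_R^1(\phi_{\vol}) = -\iota_{\phi_{\vol}}(\vol)$. Therefore the identity $\Delta(\pi) = \phi_{\vol}$ is equivalent, after applying $\dag_R^1$ to both sides, to the equality of $(n-1)$-forms
\begin{equation*}
\de\,\iota_\pi(\vol) \;=\; \iota_{\phi_{\vol}}(\vol).
\end{equation*}

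To establish this equality, I would invoke Proposition \ref{vol cycle}, which says $\partial\,\vol = -\iota_{\phi_{\vol}}\,\vol$ in $\Omega^{n-1}(R)$. Since we are working with the Poisson chain complex for the (untwisted) module $M=R$, the differential is $\partial = [\iota_\pi,\de] = \iota_\pi\de - \de\iota_\pi$ (the two operators have degrees $-2$ and $+1$, whose product is even). Because $\Omega^{n+1}(R)=0$, we have $\de\,\vol = 0$, so $\partial\,\vol = -\de\,\iota_\pi(\vol)$. Comparing with Proposition \ref{vol cycle} gives the required equality, and hence $\Delta(\pi) = \phi_{\vol}$.

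The main obstacle is nothing more than bookkeeping with the sign conventions: making sure the sign $(-1)^{p(p+1)/2}$ in the definition of $\dag_R^p$ appears consistently on both sides of the boxed identity, and that the graded commutator $[\iota_\pi,\de]$ is expanded with the correct sign given the degrees of $\iota_\pi$ and $\de$. Once those signs line up, Proposition \ref{vol cycle} does all the work, and (2) is a one-line consequence of (1) and \eqref{Detdet+detDet}.
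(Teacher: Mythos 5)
Your argument is correct. For part (2) you do exactly what the paper does: substitute (1) into the identity $(\Delta\delta+\delta\Delta)(P)=[\Delta(\pi),P]$ of \eqref{Detdet+detDet}. For part (1), however, your route differs from the paper's. You unwind the defining diagram \eqref{def-of-bv-operator} at $\pi\in\mathfrak{X}^2(R)$, track the signs $(-1)^{p(p+1)/2}$, and reduce the claim to the single identity $\de\,\iota_\pi(\vol)=\iota_{\phi_{\vol}}(\vol)$, which is precisely the content of Proposition \ref{vol cycle} (its proof shows $\iota_{\de\iota_\pi(\vol)}\vol^*=\phi_{\vol}$, equivalent to your displayed equation under Theorem \ref{dual iso}; and indeed $\partial\vol=[\iota_\pi,\de]\vol=-\de\iota_\pi(\vol)$ because $\de\vol\in\Omega^{n+1}(R)=0$). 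The paper instead first computes $\phi_{\vol}(a)=-\Delta(H_a)$ directly from Definition \ref{modular4} and the definition of $\Delta$ in degree $1$, and then applies \eqref{Detdet+detDet} in degree $0$ (using $\delta(a)=-H_a$ and $\Delta(a)=0$) to conclude $-\Delta(H_a)=[\Delta(\pi),a]=\Delta(\pi)(a)$. The two arguments rest on the same underlying computation (Lemma \ref{contraction-formula}), but yours has the small advantage that part (1) does not depend on \eqref{Detdet+detDet} --- a fact quoted from \cite{LWW1} rather than proved in this paper --- whereas the paper's proof of (1) does; in your version \eqref{Detdet+detDet} enters only where it is unavoidable, in part (2). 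Your sign bookkeeping checks out: $\dag_R^2(\pi)=-\iota_\pi(\vol)$ and $\dag_R^1(\phi_{\vol})=-\iota_{\phi_{\vol}}(\vol)$, so the two minus signs cancel as you claim.
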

\begin{proof}
(1) By the definition of modular derivation, for any $a\in R$, $\phi_{\vol}(a)=\iota_{\de\!\iota_{H_a}(\vol)}\vol^*.$
It follows from the definition of $\Delta$ that $\phi_{\vol}(a)=-\Delta(H_a).$

By \eqref{Detdet+detDet}, $(\Delta \delta +  \delta \Delta)(a)=[\Delta(\pi), a].$  Note that $\delta(a)=-H_a$ and $\Delta(a)=0$, so $-\Delta (H_a)=[\Delta(\pi), a]=\Delta(\pi)(a)$.
Hence $\phi_{\vol}(a)=-\Delta(H_a)=\Delta(\pi)(a)$ for any $a\in R$, that is $\phi_{\vol}=\Delta(\pi)$.

(2) It follows from \eqref{Detdet+detDet}.
 \end{proof}

\begin{cor} With the assumption and notation as in Proposition \ref{phi_vol=Delta(pi)}.  Then
\begin{enumerate}
\item
for any Poisson derivation $P\in \mathfrak{X}^1(R)$, $\Delta(P)$ is a Casimir element if and only if $[\phi_{\vol}, P]=0;$
\item for any  Casimir element $a\in R,$ $\phi_{\vol}(a)=0.$
\end{enumerate}
\end{cor}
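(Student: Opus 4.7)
Both parts are direct consequences of the identity
\[
(\Delta\delta + \delta\Delta)(P) = [\phi_{\vol}, P]
\]
from Proposition \ref{phi_vol=Delta(pi)}(2), once combined with two elementary observations: first, that $\Delta$ lowers degree by $1$ and hence kills $\mathfrak{X}^0(R)=R$; and second, that for $r\in R$ one has $\delta(r) = -H_r$, so $r$ is a Casimir element if and only if $\delta(r)=0$ (equivalently, $H_r=0$).

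For part (1), the plan is to apply the identity to the given Poisson derivation $P\in\mathfrak{X}^1(R)$. Since $P$ is a Poisson derivation, $\delta P=0$, and the identity collapses to
\[
\delta(\Delta P) = [\phi_{\vol}, P].
\]
As $\Delta P \in \mathfrak{X}^0(R)=R$, the left-hand side vanishes precisely when $\Delta P$ is a Casimir element, which yields the stated equivalence.

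For part (2), the plan is to apply the same identity with $P=a\in R = \mathfrak{X}^0(R)$. The degree of $\Delta$ forces $\Delta(a)=0$, while $a$ being Casimir gives $\delta(a) = -H_a = 0$. Hence the left-hand side is $0$, so $[\phi_{\vol}, a]=0$. Unfolding the Schouten--Nijenhuis bracket between the derivation $\phi_{\vol}\in\mathfrak{X}^1(R)$ and the function $a\in\mathfrak{X}^0(R)$ gives $[\phi_{\vol}, a]=\phi_{\vol}(a)$, whence $\phi_{\vol}(a)=0$.

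There is no real obstacle here: both statements are one-line deductions from Proposition \ref{phi_vol=Delta(pi)}(2). The only minor point worth emphasizing in the write-up is the identification $[\phi_{\vol}, a]=\phi_{\vol}(a)$ coming from the definition of the Schouten--Nijenhuis bracket (see Example \ref{general-G-stru}), and the characterization of Casimir elements as $\ker(\delta\colon R \to \mathfrak{X}^1(R))$.
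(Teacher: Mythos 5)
Your proposal is correct and follows exactly the paper's own argument: both parts are read off from the identity $(\Delta\delta+\delta\Delta)(P)=[\phi_{\vol},P]$ of Proposition \ref{phi_vol=Delta(pi)}(2), using $\delta P=0$ for a Poisson derivation in (1) and $\Delta(a)=0$, $\delta(a)=0$ together with $[\phi_{\vol},a]=\phi_{\vol}(a)$ in (2). No substantive difference from the paper's proof.
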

\begin{proof}
(1) Since $P$ is a Poisson derivation, $\delta(P)=0.$ Hence, by Proposition \ref{phi_vol=Delta(pi)},
\begin{align*}
\Delta(P) \text{is a Casimir element}
\Leftrightarrow \delta(\Delta(P))=0
\Leftrightarrow (\Delta \delta + \delta \Delta)(P)=0
\Leftrightarrow [\phi_{\vol}, P] =0.
\end{align*}

(2) For any $a\in R,$ $\Delta(a)=0.$ Since $a$ is a Casimir element, $\delta(a)=0.$ Then, by Proposition \ref{phi_vol=Delta(pi)}, $\phi_{\vol}(a)=[\phi_{\vol}, a]=(\Delta \delta +  \delta \Delta)(a)=0.$
\end{proof}

\subsection{BV operator on $\mathfrak{X}^*(R)$}
In this subsection we will describe the BV operator $\Delta$ precisely given in the previous subsection.
Notations are as in Section \ref{mod-der-in-dual-basis}.

\begin{thm} \label{Description-BV-Ooerator}
Let $R$ be a smooth Poisson algebra with trivial canonical bundle, and $\Delta$ be the operator given in
\eqref{def-of-bv-operator}. Then  for each $P \in\mathfrak{X}^{p}(R)$,
\begin{align*}
\Delta(P)(a_1\wedge a_2\wedge \cdots\wedge a_{p-1})
=&(-1)^p\sum_{1\leq l\leq r}(\de\!x_l)^*\big(P(a_1\wedge a_2\wedge \cdots\wedge a_{p-1}\wedge x_l)\big)\\
&+(-1)^p\sum_{I\in S}P(a_1\wedge a_2\wedge \cdots\wedge a_{p-1}\wedge  a_I)b_I.
\end{align*}
\end{thm}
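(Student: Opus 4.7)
The plan is to unfold the Poincar\'{e} duality isomorphism defining $\Delta$ so that the statement becomes an identity in $R$, reduce the relevant top-form to $\de\iota_Q(\vol)$ for a single derivation $Q$, and then reuse the dual-basis bookkeeping from the proof of Theorem~\ref{modular der of smooth}. Concretely, chasing the diagram \eqref{def-of-bv-operator} while tracking the sign $(-1)^{p(p+1)/2}$ built into $\dag_R^p$ produces $\Delta(P)=(-1)^p\flat^{n-p+1}(\de\iota_P(\vol))$, where $\flat^{n-p+1}(\omega)=\iota_\omega(\vol^*)$ is the inverse of $\ddag_R^{p-1}$ from Theorem~\ref{dual iso}; evaluating the right-hand side at $a_1\wedge\cdots\wedge a_{p-1}$ via Definition~\ref{ctr2} and writing $\omega_0=\de\!a_1\wedge\cdots\wedge\de\!a_{p-1}$ rewrites the target quantity as $(-1)^p\,\vol^*(\omega_0\wedge\de\iota_P(\vol))$.

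The next step collapses the $n$-form inside. Since $\de\omega_0=0$, the Leibniz rule yields $\omega_0\wedge\de\iota_P(\vol)=(-1)^{p-1}\de(\omega_0\wedge\iota_P(\vol))$, and $\omega_0\wedge\iota_P(\vol)$ simplifies by iterating the identity $\de\!a\wedge\iota_{P'}(\vol)=\iota_{\iota_{\de\!a}(P')}(\vol)$, which comes from Proposition~\ref{ctr-equality} applied to $\vol\wedge\de\!a=0$, together with the pointwise formula $\iota_{\de\!a}(P')(b_1,\ldots,b_{k-1})=P'(b_1,\ldots,b_{k-1},a)$ obtained from Definition~\ref{ctr4} by expanding a determinant along its last column. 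Iterating through the $p-1$ factors of $\omega_0$ one at a time and then using the antisymmetry of $P$ gives $\omega_0\wedge\iota_P(\vol)=(-1)^{p-1}\iota_Q(\vol)$ with $Q(b):=P(a_1\wedge\cdots\wedge a_{p-1}\wedge b)\in\Der(R)$, so the two factors of $(-1)^{p-1}$ cancel and
\[\Delta(P)(a_1\wedge\cdots\wedge a_{p-1})=(-1)^p\,\vol^*(\de\iota_Q(\vol)).\]

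For the final step, note that $\de\iota_Q(\vol)=\mathscr{L}_Q(\vol)$ (since $\de\vol=0$) is a scalar multiple of $\vol$. Replacing $H_a$ by the generic derivation $Q$ throughout the proof of Theorem~\ref{modular der of smooth}---rewriting the $\de\!a_I$-terms via Lemma~\ref{change one duality}, expanding $\de Q(x_{I_j})$ in the dual basis $(\de\!x_l)^*$, and using Lemma~\ref{change one} to reshuffle the contributions with $s\notin I$---yields
\[\de\iota_Q(\vol)=\Bigl(\sum_{1\le l\le r}(\de\!x_l)^*(Q(x_l))+\sum_{I\in S}Q(a_I)b_I\Bigr)\vol.\]
Since $\vol^*(\vol)=\sum_{I\in S}a_Ib_I=1_R$, substituting the definition of $Q$ produces exactly the formula claimed in the statement. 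The main obstacle is the sign tracking in the second step; the third is a verbatim adaptation of Theorem~\ref{modular der of smooth} with an arbitrary derivation in place of $H_a$.
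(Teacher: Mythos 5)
Your argument is correct, and it takes a genuinely different route from the paper. The paper computes $\de\iota_P(\vol)$ head-on as an $(n-p+1)$-form: it expands $\vol=\sum_I a_I\de\!x_I$, applies the Leibniz rule and \eqref{ctr-equality-no}, and then runs a fairly heavy bookkeeping argument (the terms $U,V,W$, the splitting $W=W_1+W_2$, the identity $W_1=(n-p+1)W$, and two shuffle correspondences) to recognize the result as $\sum_I b_I\iota_{\iota_{\de a_I}P}(\vol)+\iota_{F}(\vol)$ for the $(p-1)$-derivation $F=\sum_l(\de\!x_l)^*\bigl(P(-\wedge\cdots\wedge x_l)\bigr)$. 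You instead evaluate $\Delta(P)$ against a fixed $a_1\wedge\cdots\wedge a_{p-1}$ from the start, and the key move $\omega_0\wedge\iota_P(\vol)=(-1)^{p-1}\iota_Q(\vol)$ (obtained by iterating $\de\!a\wedge\iota_{P'}(\vol)=\iota_{\iota_{\de a}P'}(\vol)$, which indeed follows from \eqref{ctr-equality-no} and $\Omega^{n+1}(R)=0$) collapses the whole problem to the single-derivation identity $\de\iota_Q(\vol)=\bigl(\sum_l(\de\!x_l)^*(Q(x_l))+\sum_IQ(a_I)b_I\bigr)\vol$. That identity is exactly the computation in Theorem \ref{modular der of smooth}, whose proof uses only that $H_a$ is a derivation, so the substitution $H_a\rightsquigarrow Q$ is legitimate; together with $\vol^*(\vol)=\sum_Ia_Ib_I=1$ and the sign check $(-1)^{p(p+1)/2+(p-1)p/2}=(-1)^p$, this gives the stated formula. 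Your approach buys a much shorter and more conceptual derivation that reuses existing work and avoids the shuffle combinatorics entirely; the paper's direct computation has the mild advantage of producing $\de\iota_P(\vol)$ as an explicit form (i.e., $\Delta(P)$ as a multiderivation all at once) rather than value-by-value, but since a $(p-1)$-derivation is determined by its values this costs you nothing.
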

\begin{proof}
Since $\vol=\sum_{I\in S}a_I\de\!x_I$,
$
\iota_{P}(\vol)=\sum_{I\in S}a_I\iota_{P}(\de\!x_I)
$
and
\begin{align*}
&\de\!\iota_{P}(\vol)\\
=&\sum_{I\in S}\de\!a_I\wedge \iota_{P}(\de\!x_I)+\sum_{I\in S}a_I\de\!\iota_{P}(\de\!x_I)\\
\stackrel{(a)}{=}&\sum_{I\in S}(-1)^{n-p}\big(\iota_P(\de\!x_I\wedge\de\!a_I)-(-1)^{n-p+1}\iota_{\iota_{\de\!a_I}P}(\de\!x_I)\big)\\
&+\sum_{I\in S}\sum_{\sigma\in S_{p,n-p}}\sgn(\sigma)a_I\de\!P(x_{I_{\sigma(1)}}\wedge \cdots\wedge x_{I_{\sigma(p)}})
\wedge \de\!x_{I_{\sigma(p+1)}}\wedge \de\!x_{I_{\sigma(p+2)}}\wedge \cdots \wedge \de\!x_{I_{\sigma(n)}}\\
=&\sum_{I\in S}\iota_{\iota_{\de\!a_I}P}(\de\!x_{I})\\
&+\sum_{I\in S}\sum_{\sigma\in S_{p,n-p}}\sgn(\sigma)a_I\sum_{1\leq l\leq r}(\de\!x_{l})^*(P(x_{I_{\sigma(1)}}\wedge \cdots\wedge x_{I_{\sigma(p)}}))\\
&\quad \quad
\de\!x_l\wedge \de\!x_{I_{\sigma(p+1)}}\wedge \de\!x_{I_{\sigma(p+2)}}\wedge \cdots \wedge \de\!x_{I_{\sigma(n)}}\\
=&\sum_{I\in S}\iota_{\iota_{\de\!a_I}P}(\de\!x_{I}) \qquad \qquad \qquad \qquad \qquad \qquad \qquad \qquad \qquad \qquad \quad (\mbox{denoted by }U)\\
&+\sum_{I\in S}\sum_{\sigma\in S_{p,n-p}}\sgn(\sigma)a_I\sum_{1\leq l\leq n}(\de\!x_{\sigma(l)})^*(P(x_{I_{\sigma(1)}}\wedge \cdots\wedge x_{I_{\sigma(p)}}))\\
&\quad \quad
\de\!x_{\sigma(l)}\wedge \de\!x_{I_{\sigma(p+1)}}\wedge \de\!x_{I_{\sigma(p+2)}}\wedge \cdots \wedge \de\!x_{I_{\sigma(n)}} \qquad \qquad \qquad \quad (\mbox{denoted by }V)\\
&+\sum_{I\in S}\sum_{\sigma\in S_{p,n-p}}\sgn(\sigma)a_I\sum_{l \notin I}(\de\!x_{l})^*(P(x_{I_{\sigma(1)}}\wedge \cdots\wedge x_{I_{\sigma(p)}}))\\
&\quad \quad
\de\!x_l\wedge \de\!x_{I_{\sigma(p+1)}}\wedge \de\!x_{I_{\sigma(p+2)}}\wedge \cdots \wedge \de\!x_{I_{\sigma(n)}}\qquad \qquad \qquad \qquad (\mbox{denoted by }W)
\end{align*}
where $(a)$ holds by Proposition \ref{ctr-equality}, i.e. the equation \eqref{ctr-equality-no}:
\begin{equation*}
\iota_F(\omega \wedge \de\! a)= \iota_{F}(\omega)\wedge \de\! a +
 (-1)^{q-p+1}\iota_{\iota_{\de\!a}(F)}(\omega)
\end{equation*}
for any $a \in R$, $F\in \mathfrak{X}^{p}(R)$ and $\omega\in \Omega^q(R)$. 

Next we calculate the terms $V$ and $W$ respectively.
\begin{align*}
V=&\sum_{I\in S}\sum_{\sigma\in S_{p,n-p}}\sgn(\sigma)a_I\sum_{1\leq l\leq p}(\de\!x_{\sigma(l)})^*(P(x_{I_{\sigma(1)}}\wedge \cdots\wedge x_{I_{\sigma(p)}}))\\
&\quad \quad
\de\!x_{\sigma(l)}\wedge \de\!x_{I_{\sigma(p+1)}}\wedge \de\!x_{I_{\sigma(p+2)}}\wedge \cdots \wedge \de\!x_{I_{\sigma(n)}}\\
\stackrel{(b)}{=}&\sum_{I\in S}\sum_{\tau\in S_{p-1,n-p+1}}\sgn(\tau)a_I\sum_{p\leq j\leq n}(\de\!x_{\tau(j)})^*(P(x_{I_{\tau(1)}}\wedge \cdots\wedge x_{I_{\tau(p-1)}}\wedge x_{I_{\tau(j)}}))\\
&\quad \quad
 \de\!x_{I_{\tau(p)}}\wedge \de\!x_{I_{\tau(p+1)}}\wedge \cdots \wedge \de\!x_{I_{\tau(n)}}\\
=&\sum_{I\in S}\sum_{\tau\in S_{p-1,n-p+1}}\sgn(\tau)a_I\sum_{1\leq j\leq n}(\de\!x_{\tau(j)})^*(P(x_{I_{\tau(1)}}\wedge \cdots\wedge x_{I_{\tau(p-1)}}\wedge x_{I_{\tau(j)}}))\\
&\quad \quad
 \de\!x_{I_{\tau(p)}}\wedge \de\!x_{I_{\tau(p+1)}}\wedge \cdots \wedge \de\!x_{I_{\tau(n)}}
\end{align*}
where $(b)$ holds by the one-to-one
correspondence $$\{(\sigma,l)\mid \sigma\in S_{p,n-p}, 1\leq l \leq p\} \to \{(\tau,j)\mid \tau\in S_{p-1,n-p+1}, p\leq j
\leq n \}.$$
\begin{align*}
W=&\sum_{I\in S}\sum_{\sigma\in S_{p,n-p}}\sgn(\sigma)a_I\sum_{l \notin I}(\de\!x_{l})^*(P(x_{I_{\sigma(1)}}\wedge \cdots\wedge x_{I_{\sigma(p)}}))\\
&\quad \quad
\de\!x_l\wedge \de\!x_{I_{\sigma(p+1)}}\wedge \de\!x_{I_{\sigma(p+2)}}\wedge \cdots \wedge \de\!x_{I_{\sigma(n)}}\\
\stackrel{(c)}{=}&\sum_{I\in S}\sum_{\sigma\in S_{p,n-p}}\sum_{l \notin I}\sum_{1\leq j\leq n}\sgn(\sigma)\vol(\de\!x_{I_{j}\to l}^* ) (\de\!x_{I_{j}})^*(P(x_{I_{\sigma(1)}}\wedge \cdots\wedge x_{I_{\sigma(p)}}))\\
&\qquad
\de\!x_l\wedge \de\!x_{I_{\sigma(p+1)}}\wedge \de\!x_{I_{\sigma(p+2)}}\wedge \cdots \wedge \de\!x_{I_{\sigma(n)}}\\
=&\sum_{I\in S}\sum_{\sigma\in S_{p,n-p}}\sum_{l \notin I}\sum_{1\leq j\leq p}\sgn(\sigma)\vol(\de\!x_{I_{\sigma(j)\to l}}^*)  (\de\!x_{I_{\sigma(j)}})^*(P(x_{I_{\sigma(1)}}\wedge \cdots\wedge x_{I_{\sigma(p)}}))\\
&\qquad
\de\!x_l\wedge \de\!x_{I_{\sigma(p+1)}}\wedge \de\!x_{I_{\sigma(p+2)}}\wedge \cdots \wedge \de\!x_{I_{\sigma(n)}} \qquad \qquad \qquad (\mbox{denoted by }W_1) \\
+&\sum_{I\in S}\sum_{\sigma\in S_{p,n-p}}\sum_{l \notin I}\sum_{p+1\leq j\leq n}\sgn(\sigma)\vol(\de\!x_{I_{\sigma(j)\to l}}^*) (\de\!x_{I_{\sigma(j)}})^*(P(x_{I_{\sigma(1)}}\wedge \cdots\wedge x_{I_{\sigma(p)}}))\\
&\qquad
\de\!x_l\wedge \de\!x_{I_{\sigma(p+1)}}\wedge \de\!x_{I_{\sigma(p+2)}}\wedge \cdots \wedge \de\!x_{I_{\sigma(n)}}  \qquad \qquad \qquad (\mbox{denoted by }W_2)
\end{align*}
where $(c)$ holds by Lemma \ref{change one}.
In order to compute $W_1$, consider the one-to-one
correspondence from the set $\{(I,l,\sigma,j)\mid I\in S, l\notin I,\sigma\in S_{p,n-p},1\leq j
\leq p\}$ to $\{(I,l,\tau,j)\mid I\in S, l\notin I,\tau\in S_{p-1,n-p+1},p\leq j
\leq n \}$:
$$ (I,l,\sigma, 1\leq j \leq   p) \mapsto (I', l',\tau,p\leq j' \leq n),$$
where $I'=(I \backslash \{I_{\sigma(j)}\})\cup\{l\}= \{I_1, \cdots, \widehat{I_{\sigma(j)}}, \cdots, I_n, l\}$, $l'=I_{\sigma(j)}$, $$\{I'_{\tau(1)}, I'_{\tau(2)}, \cdots, I'_{\tau(p-1)}\}=\{I_{\sigma(1)}, \cdots, \widehat{I_{\sigma(j)}}, \cdots, I_{\sigma(p)}\},$$ $$\{I'_{\tau(p)}, I'_{\tau(p+1)}, \cdots, I'_{\tau(n)}\}=\{I_{\sigma(p+1)},  I_{\sigma(p+2)}, \cdots, I_{\sigma(n)}, l\},$$ and
$j'$ is the unique number satisfying $I'_{\tau (j')}= l$.
Then
\begin{align*}
W_1
=&\sum_{I\in S}\sum_{\sigma\in S_{p,n-p}}\sum_{l \notin I}\sum_{1\leq j\leq p}\sgn(\sigma)\vol(\de\!x_{I_{\sigma(j)}\to l}^*)(\de\!x_{I_{\sigma(j)}})^*(P(x_{I_{\sigma(1)}}\wedge \cdots\wedge x_{I_{\sigma(p)}}))\\
&\qquad
\de\!x_l\wedge \de\!x_{I_{\sigma(p+1)}}\wedge \de\!x_{I_{\sigma(p+2)}}\wedge \cdots \wedge \de\!x_{I_{\sigma(n)}}\\
=&\sum_{I'\in S}\sum_{\tau\in S_{p-1,n-p+1}}\sum_{l' \notin I'}\sum_{p\leq j'\leq n}\sgn(\tau)a_{I'} (\de\!x_{l'})^*(P(x_{I'_{\tau(1)}}\wedge \cdots\wedge x_{I'_{\tau(p-1)}}\wedge x_{l'}))\\
&\qquad
\de\!x_{I'_{\tau(p)}}\wedge \de\!x_{I'_{\tau(p+1)}}\wedge \cdots \wedge \de\!x_{I'_{\tau(n)}}\\
=&(n-p+1)\sum_{I\in S}\sum_{\tau\in S_{p-1,n-p+1}}\sum_{l \notin I}\sgn(\tau)a_{I} (\de\!x_{l})^*(P(x_{I_{\tau(1)}}\wedge \cdots\wedge x_{I_{\tau(p-1)}}\wedge x_{l}))\\
&\qquad
\de\!x_{I_{\tau(p)}}\wedge \de\!x_{I_{\tau(p+1)}}\wedge \cdots \wedge \de\!x_{I_{\tau(n)}}.
\end{align*}
On the other hand,
\begin{align*}
W_2=&\sum_{I\in S}\sum_{\sigma\in S_{p,n-p}}\sum_{l \notin I}\sum_{p+1\leq j\leq n}\sgn(\sigma)\vol(\de\!x_{I_{\sigma(j)}\to l}^*) (\de\!x_{I_{\sigma(j)}})^*(P(x_{I_{\sigma(1)}}\wedge \cdots\wedge x_{I_{\sigma(p)}}))\\
&\qquad
\de\!x_l\wedge \de\!x_{I_{\sigma(p+1)}}\wedge \de\!x_{I_{\sigma(p+2)}}\wedge \cdots \wedge \de\!x_{I_{\sigma(n)}}\\
=&\sum_{I'\in S}\sum_{\sigma'\in S_{p,n-p}}\sum_{l' \notin I'}\sum_{p+1\leq j'\leq n}(-1)\sgn(\sigma')\vol(\de\!x_{I'}) (\de\!x_{l'})^*(P(x_{I'_{\sigma'(1)}}\wedge \cdots\wedge x_{I'_{\sigma'(p)}}))\\
&\qquad
\de\!x_{l'}\wedge \de\!x_{I'_{\sigma'(p+1)}}\wedge \de\!x_{I'_{\sigma'(p+2)}}\wedge \cdots \wedge \de\!x_{I'_{\sigma'(n)}}\\
=&-(n-p)\sum_{I\in S}\sum_{\sigma\in S_{p,n-p}}\sum_{l \notin I}\sgn(\sigma)a_I (\de\!x_{l})^*(P(x_{I_{\sigma(1)}}\wedge \cdots\wedge x_{I_{\sigma(p)}}))\\
&\qquad
\de\!x_{l}\wedge \de\!x_{I_{\sigma(p+1)}}\wedge \de\!x_{I_{\sigma(p+2)}}\wedge \cdots \wedge \de\!x_{I_{\sigma(n)}}\\
=&-(n-p)W.
\end{align*}
It follows that
$W=W_1+W_2=W_1-(n-p)W$ and $W_1=(n-p+1)W$.
Hence
\begin{align*}
W=&\sum_{I\in S}\sum_{\tau\in S_{p-1,n-p+1}}\sum_{l \notin I}\sgn(\tau)a_{I} (\de\!x_{l})^*(P(x_{I_{\tau(1)}}\wedge \cdots\wedge x_{I_{\tau(p-1)}}\wedge x_{l}))\\
&\qquad
\de\!x_{I_{\tau(p)}}\wedge \de\!x_{I_{\tau(p+1)}}\wedge \cdots \wedge \de\!x_{I_{\tau(n)}},
\end{align*}
\begin{align*}
V+W=&\sum_{I\in S}\sum_{\tau\in S_{p-1,n-p+1}}\sum_{1\leq l \leq r}\sgn(\tau)a_{I} (\de\!x_{l})^*(P(x_{I_{\tau(1)}}\wedge \cdots\wedge x_{I_{\tau(p-1)}}\wedge x_{l}))\\
&\qquad
\de\!x_{I_{\tau(p)}}\wedge \de\!x_{I_{\tau(p+1)}}\wedge \cdots \wedge \de\!x_{I_{\tau(n)}}.
\end{align*}
Note that $\sum_{1\leq l \leq r}(\de\!x_{l})^*[P(-\wedge \cdots\wedge -\wedge x_{l})]$ is a $(p-1)$-fold multi-derivation, and
\begin{align*}
V+W=&\sum_{I\in S}\iota_{\big(\sum_{1\leq l \leq r}(\de\!x_{l})^*[P(-\wedge \cdots\wedge -\wedge x_{l})]\big)}(a_I\de\!x_I)\\
=&\iota_{\big(\sum_{1\leq l \leq r}(\de\!x_{l})^*[P(-\wedge \cdots\wedge -\wedge x_{l})]\big)}(\vol).
\end{align*}
Then \begin{align*}
&\de\!\iota_{P}(\vol)\\
=&\sum_{I\in S}\iota_{\iota_{\de\!a_I}P}(\de\!x_{I})+\iota_{\big(\sum_{1\leq l \leq r}(\de\!x_{l})^*[P(-\wedge \cdots\wedge -\wedge x_{l})]\big)}(\vol)\\
=&\sum_{I\in S}b_I\iota_{\iota_{\de\!a_I}P}(\vol)+\iota_{\big(\sum_{1\leq l \leq r}(\de\!x_{l})^*[P(-\wedge \cdots\wedge -\wedge x_{l})]\big)}(\vol).
\end{align*}

By the definition of the BV operator $\Delta$,
\begin{align*}
\Delta(P)(a_1\wedge a_2\wedge \cdots\wedge a_{p-1})
=&(-1)^p\sum_{1\leq l\leq r}(\de\!x_l)^*\big(P(a_1\wedge a_2\wedge \cdots\wedge a_{p-1}\wedge x_l)\big)\\
&+(-1)^p\sum_{I\in S}P(a_1\wedge a_2\wedge \cdots\wedge a_{p-1}\wedge  a_I)b_I.
\end{align*}
\end{proof}

\begin{rmk}\label{phi=DetPi}
For $P\in \mathfrak{X}^1(R)$, $\Delta P=-\sum_{1\leq l\leq r}(\de\!x_l)^*\big(P(x_l)\big)-\sum_{I\in S}P(a_I)b_I.$
For $P\in \mathfrak{X}^2(R)$, $(\Delta P) (a)=\sum_{1\leq l\leq r}(\de\!x_l)^*\big(P(a\wedge x_l)\big)+\sum_{I\in S}P(a\wedge a_I)b_I.$ Especially, $\phi_{\vol}=\Delta (\pi) $ by Theorem \ref{modular der of smooth}, which is consistent with Proposition \ref{phi_vol=Delta(pi)}.
\end{rmk}

\begin{exam}
Let $R={\mathbbm{k}}[x_1, x_2, \cdots, x_n]$ be a polynomial
algebra. The triple $(\mathfrak{X}^*(R),\wedge,\Delta)$ is a BV algebra, where
  $$  \Delta(P)(a_1\wedge a_2\wedge \cdots\wedge a_{p-1})=(-1)^p\sum_{1\leq l\leq n}\frac{\partial}{\partial x_{l}}\big(P(a_1\wedge a_2\wedge \cdots\wedge a_{p-1}\wedge x_l)\big)$$
  for any $P \in\mathfrak{X}^{p}(R)$ and $a_1, a_2, \cdots, a_{p-1}\in R.$
If $P=a\frac{\partial}{\partial x_{i_1}}\wedge\frac{\partial}{\partial x_{i_2}}\wedge\cdots\wedge
  \frac{\partial}{\partial x_{i_p}}$,
then
    $$\Delta(P)=\sum_{j=1}^{p}(-1)^{j}
  \frac{\partial a}{\partial x_{i_j}}\frac{\partial}{\partial x_{i_1}}\wedge
  \frac{\partial}{\partial x_{i_2}}\wedge\cdots
   \widehat{\frac{\partial}{\partial x_{i_j}}}\cdots
  \wedge\frac{\partial}{\partial x_{i_p}}.$$
\end{exam}

\subsection{BV structure on Poisson cohomology of unimodular Poisson algebra}

Now consider the smooth Poisson algebra $R$ with trivial canonical bundle. We will investigate the BV structures on its Poisson cohomology in the case that the Poisson structure is unimodular.
\begin{lem} \label{partial-de+dp=0}
The following diagram is anti-commutative
$$
\xymatrix{
\cdots \ar[r]& \Omega^{q}(R)\ar[d]_{\partial}\ar[r]^{\de}& \Omega^{q+1}(R)\ar[d]_{\partial}\ar[r]&\cdots\\
\cdots \ar[r] & \Omega^{q-1}(R)\ar[r]^{\de}&
\Omega^{q}(R)\ar[r]&\cdots }
$$
\end{lem}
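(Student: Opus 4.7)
The plan is to recall the explicit formula $\partial = [\iota_{\pi}, \de]$ valid in the case $M = R$ (as noted right after the definition of the Poisson chain complex), and then deduce the anti-commutativity as a one-line consequence of $\de^2 = 0$.

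More precisely, $\iota_{\pi}$ is a graded $R$-linear operator on $\Omega^{*}(R)$ of degree $-2$ and $\de$ is of degree $+1$, so the graded commutator is
\begin{equation*}
\partial \;=\; [\iota_{\pi}, \de] \;=\; \iota_{\pi}\de - (-1)^{(-2)(1)}\de\,\iota_{\pi} \;=\; \iota_{\pi}\de - \de\,\iota_{\pi}.
\end{equation*}
The first step is simply to verify that this formula agrees with the combinatorial definition of $\partial$ given in \eqref{poison-chain-complex} when $M = R$; this is a straightforward unpacking using the Leibniz rule for $\de$ and the fact that $\pi(a_i, a_j) = \{a_i, a_j\}$, and is essentially the content of Brylinski's original definition cited in the text.

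Granting this, the anti-commutativity is immediate: I compute
\begin{align*}
\partial\,\de + \de\,\partial
&= (\iota_{\pi}\de - \de\,\iota_{\pi})\de + \de(\iota_{\pi}\de - \de\,\iota_{\pi}) \\
&= \iota_{\pi}\de^{2} - \de\,\iota_{\pi}\de + \de\,\iota_{\pi}\de - \de^{2}\,\iota_{\pi} \\
&= 0,
\end{align*}
since $\de^{2} = 0$. Hence $\partial\,\de = -\de\,\partial$, which is exactly anti-commutativity of every square in the diagram. There is no real obstacle here; the only thing that requires care is the sign in the graded commutator $[\iota_{\pi}, \de]$, which arises from the degrees $(|\iota_{\pi}|, |\de|) = (-2, 1)$ and happens to produce an ordinary (rather than anti-) commutator. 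An alternative, slightly more conceptual route would be to invoke the Jacobi identity for graded commutators of graded operators on $\Omega^{*}(R)$, writing
\begin{equation*}
[\partial, \de] = \bigl[[\iota_{\pi}, \de], \de\bigr] = \tfrac{1}{2}\bigl[\iota_{\pi}, [\de, \de]\bigr] = 0,
\end{equation*}
but the direct computation above is shorter and self-contained.
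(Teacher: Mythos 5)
Your argument is correct, but it proves the lemma by a genuinely different route than the paper. The paper's proof is a direct element-wise computation: it applies the combinatorial definition of $\partial$ to a monomial $a_0\,\de a_1\wedge\cdots\wedge\de a_q$, applies $\de$ to the result, and observes that the outcome is exactly $-\partial(\de a_0\wedge\de a_1\wedge\cdots\wedge\de a_q)=-\partial\de(a_0\,\de a_1\wedge\cdots\wedge\de a_q)$. You instead work at the level of operators, taking as input the identity $\partial=[\iota_\pi,\de]=\iota_\pi\de-\de\iota_\pi$ (which the paper does assert, without proof, immediately after introducing the Poisson chain complex, and uses again in Proposition \ref{vol cycle}) and then getting $\partial\de+\de\partial=0$ by a two-line cancellation from $\de^2=0$. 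Your sign bookkeeping in the graded commutator is right, and the cancellation is correct. The trade-off is clear: your route is shorter and more conceptual, but it shifts the real content into the verification that the Cartan-type formula $\partial=\iota_\pi\de-\de\iota_\pi$ agrees with the combinatorial definition of $\partial$ — a check you only sketch, and which is of roughly the same length and nature as the paper's direct computation. Since the paper itself states that identity and relies on it elsewhere, invoking it here is legitimate; just be aware that a fully self-contained version of your proof would still have to carry out essentially the same unwinding the paper does. (Your Jacobi-identity variant also works, but note it divides by $2$, so it silently assumes $\operatorname{char}\mathbbm{k}\neq 2$; the direct cancellation does not.)
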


\begin{proof} For any $a_0\de\!a_1\wedge\de\!a_2\wedge\cdots \wedge \de\!a_{q} \in \Omega^{q}(R)$, by definition,
\begin{align*}
&\de \partial (a_0\de\!a_1\wedge\de\!a_2\wedge\cdots \wedge \de\!a_{q})\\
=&\sum_{1 \leq i \leq q} (-1)^{i-1} \de \{a_0, a_i\} \wedge \de\!a_1\wedge \cdots \widehat{\de\!a_i}\cdots \wedge \de\!a_{q}\\
&+\sum_{1 \leq i , j \leq q} (-1)^{i+j} \de\!a_0\wedge \de \{a_i, a_j\} \wedge \de\!a_1\wedge \cdots \widehat{\de\!a_i}\cdots \widehat{\de\!a_j}\cdots\wedge \de\!a_{q}\\
=&- \partial (\de\!a_0 \wedge \de\!a_1\wedge\de\!a_2\wedge\cdots \wedge \de\!a_{q})\\
=&- \partial \de (a_0\de\!a_1\wedge\de\!a_2\wedge\cdots \wedge \de\!a_{q}).
\end{align*}

\end{proof}
\begin{thm}\label{BV for unim poly}
If $R$ is an unimodular Poisson algebra, then its Poisson cohomology $\PH^*(R)$ admits  a BV algebra structure induced from the one on $\mathfrak{X}^*(R)$ given in Theorem \ref{main thm-BV}.
\end{thm}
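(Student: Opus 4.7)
The plan is to deduce the BV structure on $\PH^*(R)$ from the BV structure $(\mathfrak{X}^*(R),\wedge,\Delta)$ provided by Theorem~\ref{main thm-BV}, after a judicious choice of volume form that makes the BV operator into a chain map for the Poisson differential $\delta = [\pi,-]_{\bf SN}$. The crucial identity is Proposition~\ref{phi_vol=Delta(pi)}(2):
$$\Delta\delta + \delta\Delta = [\phi_{\vol},\,-\,],$$
so the obstruction to $\Delta$ commuting (in the graded sense) with $\delta$ is exactly the modular derivation.

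Since $R$ is unimodular, $\phi_{\vol}$ is a log-Hamiltonian derivation. By the transformation rule $\phi_{u\vol} = \phi_{\vol} - u^{-1}H_u$ recorded just after Definition~\ref{modular4}, there exists a unit $u \in R$ such that the volume form $\vol' := u\vol$ satisfies $\phi_{\vol'} = 0$. Replacing $\vol$ by $\vol'$ in the diagram \eqref{def-of-bv-operator} yields (via Theorem~\ref{main thm-BV}) a BV operator $\Delta$ on $\mathfrak{X}^*(R)$ with $\Delta\delta + \delta\Delta = 0$. Hence $\Delta$ preserves both cocycles and coboundaries of the Poisson cochain complex, so it descends to a well-defined degree $-1$ operator on $\PH^*(R)$, which squares to zero because $\Delta^2 = 0$ already holds on the chain level.

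To finish, we push the BV identity to cohomology. On $\mathfrak{X}^*(R)$, Theorem~\ref{main thm-BV} gives
$$[P,Q]_{\bf SN} = (-1)^{|P|}\bigl(\Delta(P\wedge Q) - \Delta(P)\wedge Q - (-1)^{|P|}P\wedge \Delta(Q)\bigr)$$
for all homogeneous $P, Q$ (using Remark~\ref{schouten-bracket-under-identification} to identify the bracket of Example~\ref{general-G-stru-2} with the Schouten--Nijenhuis bracket). The wedge product passes to $\PH^*(R)$ by Remark~\ref{PH-is-Walg}, the Schouten--Nijenhuis bracket passes by Lemma~\ref{wedge cocycle}, and $\Delta$ now passes by the previous paragraph; hence the above identity survives on $\PH^*(R)$. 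Consequently $(\PH^*(R),\wedge,\Delta)$ is a BV algebra whose underlying Gerstenhaber algebra is the standard one. The only genuinely nontrivial step is the reduction to $\phi_{\vol} = 0$ using unimodularity; the rest is a formal transfer to cohomology.
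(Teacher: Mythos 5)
Your proof is correct, and its overall shape matches the paper's: both arguments begin by using unimodularity to replace $\vol$ by $\vol'=u\vol$ with $\phi_{\vol'}=0$ (this is exactly Remark~\ref{unimodular-com-dia}), and then transfer $\wedge$, the bracket, and $\Delta$ to cohomology. The genuine difference is in how you establish that $\Delta$ anti-commutes with $\delta$ and hence descends. The paper works entirely on the differential-form side of the Poincar\'e duality: since $\phi_{\vol'}=0$, the twisted module $R_t$ is just $R$, so by Theorem~\ref{comm diag1} the cohomology differential $\delta$ corresponds to the untwisted Poisson boundary $\partial$ on $\Omega^{*}(R)$, and $\Delta$ corresponds to the de Rham differential $\de$; the descent then follows from the chain-level identity $\partial\de+\de\partial=0$ of Lemma~\ref{partial-de+dp=0}. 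You instead invoke the purely algebraic identity $\Delta\delta+\delta\Delta=[\phi_{\vol},-]$ of Proposition~\ref{phi_vol=Delta(pi)}(2) (itself a consequence of the general BV-algebra relation $(\Delta\delta+\delta\Delta)(P)=[\Delta(\pi),P]$ together with $\Delta(\pi)=\phi_{\vol}$), which vanishes once $\phi_{\vol'}=0$. Your route is shorter and isolates the modular derivation as the precise obstruction, which foreshadows the pseudo-unimodular discussion in Section~5; the paper's route is more self-contained at the chain level and is the same diagrammatic mechanism it reuses verbatim in the proof of Theorem~\ref{Pseudo-unimodular has BV}, where the twisted de Rham differential $\de_t$ replaces $\de$. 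The remaining steps in your write-up (well-definedness of $\wedge$ and of the bracket on $\PH^{*}(R)$, $\Delta^{2}=0$, and the survival of the seven-term identity) agree with the paper's use of Lemma~\ref{wedge cocycle} and Theorem~\ref{main thm-BV}.
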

\begin{proof}
By Lemma \ref{wedge cocycle}, the product
$$\wedge : \,\PH^p(R)\times \PH^q(R)\to \PH^{p+q}(R), \, \overline{F}\times \overline{G}\mapsto \overline{F\wedge G}$$
is well-defined.

Note that $\partial \de+\de \partial=0$ by Lemma \ref{partial-de+dp=0}. The conclusion follows directly from Remark \ref{unimodular-com-dia}, Theorem \ref{main thm-BV} and the following (anti-)commutative diagram, which indicates that the operator $\Delta$ can also pass through the cohomology differential $\delta$ to its cohomology groups.
\[
\xymatrixrowsep{0.2 in}
\xymatrixcolsep{0.1 in}
\xymatrix{
&&&& &&&\cdot &&& \cdot  &&& \\
&&&\cdots \ar[rrr]&&&\mathfrak{X}^{p+1}(R)\ar@{.>}[ddd]^{\dag_R^{p+1}}\ar[rrr]^{\Delta}\ar[ur]&&&
\mathfrak{X}^p(R)\ar@{.>}[ddd]^{\dag_R^{p}}\ar[r]\ar[ur]& \cdots\\
&&\cdots \ar[rrr]&&&\mathfrak{X}^p(R)\ar[ddd]^{\dag_R^{p}}\ar[rrr]^{\Delta}\ar[ur]^{\delta}&&& \mathfrak{X}^{p-1}(R)\ar[ddd]^{\dag_R^{p-1}}\ar[r]\ar[ur]^{\delta}&\cdots
\\
&&&& &&&\cdot &&& \cdot  &&& \\
&&&\cdots \ar@{.>}[rrr]&&&\Omega^{n-p-1}(R)\ar@{.>}[rrr]^{\de}\ar@{.>}[ur]&&&
\Omega^{n-p}(R)\ar@{.>}[r] \ar@{.>}[ur]& \cdots\\
&&\cdots \ar[rrr]&&&\Omega^{n-p}(R)\ar[rrr]^{\de}\ar@{.>}[ur]^{\partial}&&& \Omega^{n-p+1}(R)\ar[r]\ar@{.>}[ur]^{\partial}&\cdots
}\]
\end{proof}
%

\section{BV structure for pseudo-unimodular Poisson algebras}

In this section, a notion of  pseudo-unimodular Poisson algebras is given, and  a BV operator is constructed on the Poisson cohomology for any pseudo-unimodular Poisson algebra.  First recall two useful results.

\begin{lem}\cite[Lemma 2]{LWW1}\label{dt square zero}
Let $\varpi\in \Omega^1(R)$ be a 1-form and $\de'=\varpi \wedge - $. 
Let $\de_t=\de -\de'$. Then $\de_t^2=0$ if and only if $\varpi$ is a de Rham 1-cocycle.
\end{lem}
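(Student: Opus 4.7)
The plan is to expand $\de_t^2 = (\de - \de')^2$ directly and reduce the question to a statement purely about $\de\varpi$. Writing out the square gives $\de_t^2 = \de^2 - (\de\de' + \de'\de) + \de'^2$. Two of these four pieces drop out for free: $\de^2 = 0$ is the de Rham identity, and $\de'^2(\omega) = \varpi \wedge \varpi \wedge \omega = 0$ because $\varpi$ is a $1$-form. So the whole problem reduces to evaluating the anti-commutator $\de\de' + \de'\de$.

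For that anti-commutator, I would apply it to an arbitrary $\omega \in \Omega^p(R)$ and use the graded Leibniz rule $\de(\varpi \wedge \omega) = \de\varpi \wedge \omega - \varpi \wedge \de\omega$ (the sign being $(-1)^{|\varpi|} = -1$). Adding $\de'\de\omega = \varpi \wedge \de\omega$ cancels the second term, leaving $(\de\de' + \de'\de)(\omega) = \de\varpi \wedge \omega$. Thus $\de_t^2 = -(\de\varpi)\wedge(-)$ as an operator on $\Omega^*(R)$.

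The equivalence is then immediate. If $\varpi$ is a de Rham $1$-cocycle, then $\de\varpi = 0$ and so $\de_t^2 = 0$. Conversely, if $\de_t^2 = 0$, evaluating at $1 \in \Omega^0(R) = R$ yields $\de_t^2(1) = -\de\varpi = 0$. I do not foresee any real obstacle; the only point requiring mild care is the sign in the Leibniz rule, since getting it wrong would cause the two $\varpi \wedge \de\omega$ terms to reinforce rather than cancel, and one would no longer obtain a clean factorization as left-multiplication by an exact form.
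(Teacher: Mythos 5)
Your argument is correct and complete; the paper itself only cites this lemma from \cite{LWW1} without reproducing a proof, and your expansion of $(\de-\de')^2$ — killing $\de^2$ and $\de'^2=(\varpi\wedge\varpi)\wedge(-)$, then computing the anti-commutator via the graded Leibniz rule $\de(\varpi\wedge\omega)=\de\varpi\wedge\omega-\varpi\wedge\de\omega$ to get $\de_t^2=-(\de\varpi)\wedge(-)$ — is exactly the standard argument. The sign bookkeeping and the evaluation at $1\in\Omega^0(R)$ for the converse are both right.
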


\begin{prop}\cite[Proposition 3]{LWW1}\label{twist dRdiff and part}
 Let $R$ be a Poisson algebra with the Poisson structure $\pi$ and $\varpi\in \Omega^1(R)$ be a de Rham 1-cocycle.  Then
\begin{enumerate}
\item $\phi=\iota_{\varpi}\pi\in \mathfrak{X}^1(R)$ is a Poisson derivation.
\item
$\partial_t =[\iota_{\pi}, \de_t]$, where $\partial_t=\partial^{R_{\phi}}$, $\de_t=\de -\de'$ and  $\de'=\varpi \wedge - $.
\item
$ \partial_t \de_t+\de_t \partial_t=0.$

\end{enumerate}
\end{prop}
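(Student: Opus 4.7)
The plan is to establish the three parts in the natural order $(1) \Rightarrow (2) \Rightarrow (3)$: part (1) by a direct algebraic verification, part (2) by a graded-commutator identity built on Proposition \ref{ctr-equality}, and part (3) as a short formal consequence of (2) together with Lemma \ref{dt square zero}.

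For (1), I would write $\varpi$ as a finite sum $\varpi = \sum_i a_i \de b_i$ (available since $\Omega^1(R)$ is generated over $R$ by K\"ahler differentials), so that $\phi(c) = (\iota_\varpi\pi)(c) = \sum_i a_i\{b_i, c\}$. Expanding both sides of the desired Leibniz identity $\phi(\{c,d\}) = \{\phi(c), d\} + \{c, \phi(d)\}$, the Leibniz rule on the right produces cross-terms $\{a_i, d\}\{b_i, c\}$ and $\{c, a_i\}\{b_i, d\}$, while the Jacobi identity applied to $\{b_i, \{c, d\}\}$ on the left exactly cancels the inner-triple-bracket contributions. The surviving obstruction is $\sum_i\{a_i, c\}\{b_i, d\} - \sum_i\{a_i, d\}\{b_i, c\}$, which is the evaluation of the 2-form $\de\varpi = \sum_i \de a_i \wedge \de b_i$ on the pair of derivations $\{-, c\}, \{-, d\}$ via the canonical pairing $\Omega^2(R) \times \wedge^2\Der(R) \to R$; it vanishes by the de Rham closedness of $\varpi$.

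For (2), the key step is computing the graded commutator $[\iota_\pi, \de'] = \iota_\pi\de' - \de'\iota_\pi$, where $\de' = \varpi \wedge -$. Applying \eqref{ctr-equality-no} with $F = \pi$ to $\omega \wedge \de b_i$ and summing with coefficients $a_i$ yields $\iota_\pi(\omega \wedge \varpi) = \iota_\pi(\omega) \wedge \varpi + (-1)^{q-1}\iota_\phi(\omega)$ for $\omega \in \Omega^q(R)$. Converting both wedge products to the form $\varpi \wedge (-)$ via graded-commutativity and collecting signs, the identity rearranges to $[\iota_\pi, \de'] = -\iota_\phi$. Combined with the identity $\partial = [\iota_\pi, \de]$ for $M = R$ (noted after the definition of the Poisson chain complex) and with Lemma \ref{twist diff}, which gives $\partial^{R_\phi} = \partial + \iota_\phi$, one concludes $[\iota_\pi, \de_t] = [\iota_\pi, \de] - [\iota_\pi, \de'] = \partial + \iota_\phi = \partial_t$. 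Part (3) is then immediate: $\partial_t\de_t + \de_t\partial_t = (\iota_\pi\de_t - \de_t\iota_\pi)\de_t + \de_t(\iota_\pi\de_t - \de_t\iota_\pi) = \iota_\pi\de_t^2 - \de_t^2\iota_\pi = 0$ since $\de_t^2 = 0$ by Lemma \ref{dt square zero}.

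The hard part will be the sign bookkeeping in part (2): tracking the graded-commutativity swap between $\omega\wedge\varpi$ and $\varpi\wedge\omega$ and verifying that the signs in \eqref{ctr-equality-no} assemble correctly into the stated graded commutator. Once the key identity $[\iota_\pi, \de'] = -\iota_\phi$ is in hand, (3) reduces to a one-line cancellation, and (1), while requiring patient expansion of the Poisson identities, presents no conceptual difficulty.
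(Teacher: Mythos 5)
The paper offers no proof of this proposition; it is imported verbatim from \cite[Proposition 3]{LWW1}, so there is no internal argument to compare against. Your proposal is a correct, self-contained derivation using only this paper's own machinery: part (1) by direct expansion plus Jacobi, with the obstruction identified as $\de\varpi$ paired against the two Hamiltonian derivations; part (2) via the key identity $[\iota_\pi,\de']=-\iota_\phi$, which does follow from \eqref{ctr-equality-no} with $F=\pi$, $R$-linearity of the contraction in both slots (so the identity for $\omega\wedge\de b_i$ sums correctly against the coefficients $a_i$, with $\phi=\sum_i a_i\,\iota_{\de b_i}\pi=\iota_\varpi\pi$), and the sign $\omega\wedge\varpi=(-1)^q\varpi\wedge\omega$; this combines with $\partial=[\iota_\pi,\de]$ and Lemma \ref{twist diff} to give (2), and (3) is then the one-line cancellation you state, using Lemma \ref{dt square zero}. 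The only blemish is a harmless sign in part (1): under the paper's convention $(\iota_\varpi\pi)(c)=\pi(\de c\wedge\varpi)$ one gets $\phi(c)=\sum_i a_i\{c,b_i\}$ rather than $\sum_i a_i\{b_i,c\}$; this flips the sign of the surviving obstruction but not its identification with the evaluation of $\de\varpi$, so the argument is unaffected (and your part (2) computation already uses the correct normalization of $\phi$).
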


Pseudo-unimodular Poisson structure for smooth algebras with trivial canonical bundle is defined similarly as for Frobenius Poisson algebras \cite[Definition 10]{LWW1}.

\begin{defn}\label{def of pseudo-unimodular} Let $(R, \pi)$ be a smooth Poisson algebra with trivial canonical bundle. Then $R$ is said to be  {\bf pseudo-unimodular} if there exists a de Rham 1-cocycle $\varpi \in  \Omega^1(R)$ such that $\iota_{\varpi}\pi$ is the modular derivation of $R$.
\end{defn}

\begin{rmk}
If Poisson algebra $R$ is unimodular, i.e. its modular derivation is a log-Hamiltonian derivation $u^{-1}\{-,u\}$ for some invertible element $u \in R$, then  $u^{-1}\{-,u\}=\iota_{u^{-1}\de\!u}\pi$, and  $\de(u^{-1}\de\!u)=0$. So $R$ is pseudo-unimodular.
\end{rmk}

\begin{exam}
If the modular derivation of a Poisson algebra $R$ is a Hamiltonian derivation, say, $\{u,-\}$, then $\{u,-\}=-\iota_{\de\!u}\pi$. Hence,  $R$ is also pseudo-unimodular (see \cite[Example 2.5]{LWW} for an example).
\end{exam}

\begin{rmk}
For polynomial Poisson algebras, any de Rham 1-cocycle must be an exact 1-form. Hence a polynomial Poisson algebra is pseudo-unimodular if and only if its modular derivation is a Hamiltonian derivation.
\end{rmk}

In the following, let $R$ be a pseudo-unimodular  smooth Poisson algebra  with trivial canonical bundle. By Proposition  \ref{twist dRdiff and part}, we can twist the de Rham differential by the de Rham 1-cocycle such that the twisted de Rham differential $\de_t$ is anti-commutative with the twisted Poisson differential  $\partial_t$ (with respect to the modular derivation).
Then we define a twisted differential operator $\Delta_t$ on $\mathfrak{X}^*(R)$ induced by the twisted de Rham differential $\de_t$ :
\begin{equation}\label{def of Dt}
\xymatrix{
   \mathfrak{X}^{p}(R) \ar[d]_{\dag_R^{p}} \ar@{-->}[r]^{\Delta_t}
        &   \mathfrak{X}^{p-1}(R)      \\
  \Omega^{n-p}(R)    \ar[r]^{\de_t}
        &  \Omega^{n-p+1}(R)\ar[u]_{(\dag_R^{p-1})^{-1}}.
     }
\end{equation}
It is easy to see that $\Delta_t=\Delta-\Delta'$, where $\Delta'=(\dag_R^{*-1})^{-1} \de'  \dag_R^{*}$, and $\de'=\varpi \wedge - $. We claim that $\Delta'$ is a super-derivation.

Keeping notations as above, we have the following lemma for pseudo-unimodular Poisson algebras.

\begin{lem}\label{delta' and con}
  For any $P\in \mathfrak{X}^p(R)$ and $\omega\in \Omega^{p-1}(R)$, 
  $\iota_\omega(\Delta' (P))=\Delta'(\iota_\omega P ).$
\end{lem}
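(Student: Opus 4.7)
The plan is to follow the strategy of Lemma~\ref{key-lemma}. By Theorem~\ref{dual iso}, every $P \in \mathfrak{X}^{p}(R)$ can be written uniquely as $P = \iota_{\alpha}(\vol^{*})$ for some $\alpha \in \Omega^{n-p}(R)$. The iterated-contraction identity $\iota_{\omega} \circ \iota_{\alpha} = \iota_{\omega \wedge \alpha}$, which follows directly from Definition~\ref{ctr2}, lets me reduce both sides of the claimed identity to expressions of the form $c \cdot \iota_{(\cdot)}(\vol^{*})$.

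First I would unwind the defining diagram of $\Delta'$ (the analogue of \eqref{def of Dt} with $\de_{t}$ replaced by $\de'=\varpi\wedge-$) and establish the general formula
\begin{align*}
\Delta'\bigl(\iota_{\beta}(\vol^{*})\bigr) = (-1)^{q}\,\iota_{\varpi \wedge \beta}(\vol^{*}), \qquad \beta \in \Omega^{n-q}(R),
\end{align*}
whose sign $(-1)^{q}$ collects the normalizations $(-1)^{q(q+1)/2}$ and $(-1)^{(q-1)q/2}$ in $\dag_{R}^{q}$ and $(\dag_{R}^{q-1})^{-1}$, together with the identity $\ddag_{R}^{q} \circ \flat_{R}^{n-q} = \id$ from Theorem~\ref{dual iso}. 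Applying this with $q = 1$ and $\beta = \omega \wedge \alpha$ yields
\begin{align*}
\Delta'(\iota_{\omega} P) = \Delta'\bigl(\iota_{\omega \wedge \alpha}(\vol^{*})\bigr) = -\iota_{\varpi \wedge \omega \wedge \alpha}(\vol^{*}),
\end{align*}
while applying it with $q = p$ and $\beta = \alpha$, then contracting with $\omega$, gives
\begin{align*}
\iota_{\omega}(\Delta' P) = (-1)^{p}\,\iota_{\omega \wedge \varpi \wedge \alpha}(\vol^{*}) = (-1)^{p}(-1)^{p-1}\,\iota_{\varpi \wedge \omega \wedge \alpha}(\vol^{*}) = -\iota_{\varpi \wedge \omega \wedge \alpha}(\vol^{*}),
\end{align*}
where the middle equality uses the Koszul identity $\omega \wedge \varpi = (-1)^{p-1}\varpi \wedge \omega$. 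The two expressions agree.

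The conceptual reason the identity holds \emph{without} the correction term $\iota_{\de\omega}(P)$ that appeared in Lemma~\ref{key-lemma} is that $\de' = \varpi \wedge {-}$ is not a derivation but mere left multiplication by a fixed $1$-form: one simply has $\de'(\omega \wedge \alpha) = \varpi \wedge \omega \wedge \alpha$, with no piece involving $\de$ applied to $\omega$. The only real obstacle is therefore sign bookkeeping, and the key cancellation is between the Koszul sign $(-1)^{p-1}$ from permuting $\varpi$ past $\omega$ and the factor $(-1)^{p}$ inherited from the normalizations in $\dag_{R}^{*}$.
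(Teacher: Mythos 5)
Your proposal is correct and follows essentially the same route as the paper: write $P=\iota_\alpha(\vol^*)$ via Theorem~\ref{dual iso}, use $\iota_\omega\iota_\alpha=\iota_{\omega\wedge\alpha}$ and the formula $\Delta'(\iota_\beta(\vol^*))=(-1)^q\iota_{\varpi\wedge\beta}(\vol^*)$, and finish with the Koszul sign from commuting $\varpi$ past $\omega$. The only cosmetic difference is that the paper runs the computation as a single chain of equalities from $\Delta'(\iota_\omega P)$ to $\iota_\omega(\Delta' P)$, whereas you evaluate both sides separately; the content is identical.
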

\begin{proof}
By Theorem \ref{dual iso}, $P=\iota_\alpha(\vol^*)$ for some $\alpha\in \Omega^{n-p}(R)$. Then
\begin{align*}
  \Delta'(\iota_\omega(P))&=\Delta'(\iota_\omega(\iota_\alpha(\vol^*)))\\
  &=\Delta'(\iota_{\omega\wedge\alpha}(\vol^*)) &\mbox{ (by Definition  \ref{ctr2})}\\
  &=-\iota_{\de' (\omega\wedge\alpha)}(\vol^*) &\mbox{ (by the definition of $\Delta'$)}\\
  &=(-1)^{p}\iota_{\omega\wedge\de' \alpha}(\vol^*) &\mbox{ (by the definition of $\de'$)}\\
  &=\iota_{\omega }((-1)^{p}\iota_{\de' \alpha}(\vol^*))\\
  &=\iota_{\omega }(\Delta'( P)) &\mbox{ (by the definition of $\Delta'$)}
\end{align*}
\end{proof}

\begin{prop}\label{delta' der}
The operator $\Delta'$ is a super-derivation on the graded algebra $\mathfrak{X}^{*}(R)$, that is, for any
 $P\in \mathfrak{X}^p(R)$ and $Q\in \mathfrak{X}^q(R)$,
$$\Delta' (P\wedge Q)=\Delta' (P)\wedge Q+(-1)^p P\wedge\Delta'( Q).$$
\end{prop}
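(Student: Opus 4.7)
My plan is to verify the super-derivation identity by pairing both sides with an arbitrary element of $\Omega^{p+q-1}(R)$, reducing the statement to the contraction identities already established in this section. Since both $\Delta'(P\wedge Q)$ and $\Delta'(P)\wedge Q + (-1)^p P\wedge \Delta'(Q)$ live in $\mathfrak{X}^{p+q-1}(R)\cong \Hom_R(\Omega^{p+q-1}(R),R)$, it suffices to show that $\iota_\omega$ applied to each side of the claimed identity produces the same element of $R$, for every $\omega\in \Omega^{p+q-1}(R)$.

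For the left-hand side, since $P\wedge Q\in \mathfrak{X}^{p+q}(R)$ and $\omega\in \Omega^{p+q-1}(R)$, Lemma \ref{delta' and con} gives $\iota_\omega(\Delta'(P\wedge Q))=\Delta'(\iota_\omega(P\wedge Q))$. Applying Lemma \ref{con-map-wedge} to decompose $\iota_\omega(P\wedge Q)$ with $\alpha=\iota_Q(\omega)\in \Omega^{p-1}(R)$ and $\beta=\iota_P(\omega)\in \Omega^{q-1}(R)$, then using Lemma \ref{delta' and con} once more for $P$ (paired with $\alpha$) and for $Q$ (paired with $\beta$), one obtains
\[
\iota_\omega(\Delta'(P\wedge Q))=(-1)^{(p-1)q}\,\iota_\alpha(\Delta' P)+(-1)^p\,\iota_\beta(\Delta' Q).
\]

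For the right-hand side, note that $\Delta' P\wedge Q$ and $P\wedge \Delta' Q$ both lie in $\mathfrak{X}^{p+q-1}(R)$, so the contraction by $\omega\in \Omega^{p+q-1}(R)$ is simply evaluation of a multi-derivation on a top-degree form. A short bookkeeping computation from Definition \ref{F-wedge-G}, based on regrouping $(m,n)$-shuffles via the identification of swap of the two blocks (which introduces a sign $(-1)^{mn}$), establishes the elementary identity $(F\wedge G)(\omega)=(-1)^{|F||G|}F(\iota_G(\omega))=G(\iota_F(\omega))$ whenever $|\omega|=|F|+|G|$. Applying this to $(\Delta' P, Q)$ and to $(P,\Delta' Q)$ yields
\[
\iota_\omega(\Delta' P\wedge Q)=(-1)^{(p-1)q}\,\iota_\alpha(\Delta' P),\qquad \iota_\omega(P\wedge \Delta' Q)=\iota_\beta(\Delta' Q),
\]
so combining gives exactly the expression found for the left-hand side. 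Since $\omega$ is arbitrary, the super-derivation identity follows.

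The main obstacle is keeping the signs straight: the decomposition of $\iota_\omega$ of a wedge product comes in two flavors in this proof, the ``generic'' version of Lemma \ref{con-map-wedge} which applies when $|\omega|=|P\wedge Q|-1$ and so takes values in $\mathfrak{X}^1(R)$, and the ``evaluation'' version used in the second step where $|\omega|$ equals the full degree of the multi-derivation and the value lies in $R$. The fact that the same sign factors $(-1)^{(p-1)q}$ and $(-1)^p$ appear in both is ultimately a shuffle-sign identity and is where most of the care must be taken. A cleaner but essentially equivalent alternative would be to first show $\Delta'(P)=(-1)^p\iota_\varpi(P)$ (using $\iota_{\varpi\wedge\gamma}=\iota_\varpi\iota_\gamma$ together with the fact that $\flat$ inverts $\ddag$) and then verify the graded Leibniz rule directly for $\iota_\varpi$; I prefer the route above because it uses only the two lemmas just established.
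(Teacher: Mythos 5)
Your proof is correct and follows essentially the same route as the paper's: both pair each side with an arbitrary $\omega\in\Omega^{p+q-1}(R)$, use Lemma \ref{delta' and con} to commute $\Delta'$ past contractions, decompose $\iota_\omega(P\wedge Q)$ via Lemma \ref{con-map-wedge} with $\alpha=\iota_Q(\omega)$, $\beta=\iota_P(\omega)$, and identify the right-hand side through the evaluation identity $\iota_\omega(Q\wedge F)=\iota_{\iota_Q(\omega)}(F)$. The sign bookkeeping you flag matches the paper's computation exactly.
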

\begin{proof}
  For any $\omega\in \Omega^{p+q-1}(R)$, by Lemma  \ref{delta' and con}
 $\iota_\omega(\Delta'(P\wedge Q))=\Delta'(\iota_\omega(P\wedge Q)).$

If we set $\alpha=\iota_Q(\omega)$, then  by Lemma  \ref{delta' and con} again,
\begin{align*}
\iota_\omega(\Delta' (P)\wedge Q)&=(-1)^{(p-1)q}\iota_\omega(Q\wedge\Delta'( P))\\
&=(-1)^{(p-1)q}\iota_\alpha(\Delta'( P))\\
&=(-1)^{(p-1)q}\Delta'(\iota_\alpha(P)).
\end{align*}
If we set $\beta=\iota_P(\omega)$, then
$\iota_\omega( P\wedge\Delta'( Q))=\Delta'(\iota_\beta(Q))$.
By Lemma  \ref{con-map-wedge},
$$\iota_\omega(P\wedge Q)=(-1)^{(p-1)q}\iota_\alpha(P)+(-1)^p\iota_\beta(Q).$$
Hence
\begin{align*}
 \iota_\omega(\Delta'(P\wedge Q))=&\Delta'(\iota_\omega(P\wedge Q))\\
 =&(-1)^{(p-1)q}\Delta'(\iota_\alpha(P))+(-1)^p\Delta'(\iota_\beta(Q))\\
 =&\iota_\omega(\Delta' (P)\wedge Q)+(-1)^p\iota_\omega( P\wedge\Delta'( Q))\\
 =&\iota_\omega\big(\Delta' (P)\wedge Q+(-1)^p P\wedge\Delta' (Q)\big).
\end{align*}
This ends the proof.
\end{proof}

Recall that Theorem  \ref{main thm-BV} shows  the triple $(\mathfrak{X}^*(R),\wedge,\Delta)$ is a BV algebra. For any
 $P\in \mathfrak{X}^p(R)$ and $Q\in \mathfrak{X}^q(R),$
$$[P,Q]=(-1)^p(\Delta (P\wedge Q)-\Delta P\wedge Q-(-1)^p P\wedge\Delta Q).$$
It follows from Proposition  \ref{delta' der} that
$$[P,Q]=(-1)^p(\Delta_t (P\wedge Q)-\Delta_t P\wedge Q-(-1)^p P\wedge\Delta_t Q),$$
where $\Delta_t=\Delta-\Delta'$.
Hence $(\mathfrak{X}^*(R),\wedge,\Delta_t)$ is  also  a BV algebra.

\begin{thm} \label{Pseudo-unimodular has BV}
If $(R, \pi)$ is a pseudo-unimodular Poisson algebra, then
$(\PH^*(R),\wedge,\Delta_t)$ is a BV algebra.
\end{thm}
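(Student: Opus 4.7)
The plan is to mimic the unimodular case of Theorem \ref{BV for unim poly}, using the twisted version of the anti-commutation diagram between de Rham differential and Poisson chain differential. The product $\wedge$ descends to $\PH^*(R)$ by Lemma \ref{wedge cocycle} (see Remark \ref{PH-is-Walg}), so the graded-commutative algebra structure on $\PH^*(R)$ is automatic. The task therefore reduces to showing that $\Delta_t$ descends to a degree $-1$ operator on $\PH^*(R)$ that squares to zero and satisfies the BV identity with the Schouten bracket $[-,-]_{\bf SN}$.

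The key step is to establish the anti-commutation relation $\delta \Delta_t + \Delta_t \delta = 0$ on $\mathfrak{X}^*(R)$. Since $R$ is pseudo-unimodular, $\phi_{\vol} = \iota_{\varpi}\pi$ for a de Rham 1-cocycle $\varpi$, and Theorem \ref{comm diag1} (with $M=R$, $M_t=R_t$ twisted by the modular derivation) gives an isomorphism of complexes:
\[
\xymatrix{
\mathfrak{X}^p(R) \ar[d]_{\dag_R^{p}}^{\cong} \ar[r]^{\delta} & \mathfrak{X}^{p+1}(R) \ar[d]^{\dag_R^{p+1}}_{\cong} \\
\Omega^{n-p}(R) \ar[r]^{\partial_t} & \Omega^{n-p-1}(R).
}
\]
On the other hand, by the very definition \eqref{def of Dt}, $\Delta_t$ is the transport of the twisted de Rham differential $\de_t$ along these same isomorphisms. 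Hence the anti-commutation $\partial_t \de_t + \de_t \partial_t = 0$ of Proposition \ref{twist dRdiff and part}(3) translates directly into $\delta \Delta_t + \Delta_t \delta = 0$ on $\mathfrak{X}^*(R)$. Likewise $\Delta_t^2 = 0$ follows from $\de_t^2 = 0$ (Lemma \ref{dt square zero}). Consequently $\Delta_t$ preserves cocycles and coboundaries, descends to a degree $-1$ operator $\Delta_t : \PH^p(R) \to \PH^{p-1}(R)$, and still squares to zero.

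Finally, the BV identity
\[
[P, Q]_{\bf SN} = (-1)^p \bigl( \Delta_t(P \wedge Q) - \Delta_t(P) \wedge Q - (-1)^p P \wedge \Delta_t(Q) \bigr)
\]
holds on $\mathfrak{X}^*(R)$, as was shown in the paragraph preceding the theorem (using that $(\mathfrak{X}^*(R),\wedge,\Delta)$ is BV by Theorem \ref{main thm-BV} and that $\Delta' = \Delta - \Delta_t$ is a graded derivation by Proposition \ref{delta' der}). Since each of $\wedge$, $[-,-]_{\bf SN}$, and $\Delta_t$ descends to $\PH^*(R)$, the identity passes term-by-term to cohomology. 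This gives $(\PH^*(R), \wedge, \Delta_t)$ the structure of a BV algebra.

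The main technical care is in sign-tracking: the isomorphism $\dag_R^p = (-1)^{p(p+1)/2}\ddag_R^p$ intertwines $\delta$ and $\partial_t$ only up to a factor of $(-1)^{p-1}$ (see Theorem \ref{comm diag1}), so one must verify that the sign discrepancies between adjacent degrees are compatible with the sign in the relation $\partial_t \de_t + \de_t \partial_t = 0$. A routine check confirms that these signs cancel, so the transferred relation on $\mathfrak{X}^*(R)$ is indeed a clean anti-commutation, and no correction is needed. This is the only subtlety; the remainder of the argument is purely formal.
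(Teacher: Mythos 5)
Your proposal is correct and follows essentially the same route as the paper: use pseudo-unimodularity to produce the de Rham $1$-cocycle $\varpi$ with $\phi_{\vol}=\iota_{\varpi}\pi$, transport $\partial_t \de_t + \de_t\partial_t=0$ (Proposition \ref{twist dRdiff and part}) through the Poincar\'e duality isomorphisms of Theorem \ref{comm diag1} to get $\delta\Delta_t+\Delta_t\delta=0$, and combine with Lemma \ref{wedge cocycle} and the cochain-level BV identity for $\Delta_t$ established just before the theorem. Your sign remark is slightly misplaced --- the factor $(-1)^{p-1}$ belongs to $\ddag^p$, and the normalization $\dag^p=(-1)^{p(p+1)/2}\ddag^p$ makes the square with $\delta$ and $\partial_t$ commute on the nose, so conjugating both $\partial_t$ and $\de_t$ by the same $\dag$'s transfers the anti-commutation exactly, as you conclude.
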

\begin{proof}

Since the Poisson structure is pseudo-unimodular,  there exists a  de Rham 1-cocycle $\varpi\in \Omega^1(R)$ such that its modular derivation $\phi_{\vol}=\iota_{\varpi}\pi$.
Consider the following (anti-)commutative diagram:
\[
\xymatrixrowsep{0.2 in}
\xymatrixcolsep{0.1 in}
\xymatrix{
&&&& &&&\cdot &&& \cdot  &&& \\
&&&\cdots \ar[rrr]&&&\mathfrak{X}^{p+1}(R)\ar@{.>}[ddd]^{\dag_R^{p+1}}\ar[rrr]^{\Delta_t}\ar[ur]&&&
\mathfrak{X}^p(R)\ar@{.>}[ddd]^{\dag_R^{p}}\ar[r]\ar[ur]& \cdots\\
&&\cdots \ar[rrr]&&&\mathfrak{X}^p(R)\ar[ddd]^{\dag_R^{p}}\ar[rrr]^{\Delta_t}\ar[ur]^{\delta}&&& \mathfrak{X}^{p-1}(R)\ar[ddd]^{\dag_R^{p-1}}\ar[r]\ar[ur]^{\delta}&\cdots
\\
&&&& &&&\cdot &&& \cdot  &&& \\
&&&\cdots \ar@{.>}[rrr]&&&\Omega^{n-p-1}(R)\ar@{.>}[rrr]^{\de_t}\ar@{.>}[ur]&&&
\Omega^{n-p}(R)\ar@{.>}[r] \ar@{.>}[ur]& \cdots\\
&&\cdots \ar[rrr]&&&\Omega^{n-p}(R)\ar[rrr]^{\de_t}\ar@{.>}[ur]^{\partial_t}&&& \Omega^{n-p+1}(R)\ar[r]\ar@{.>}[ur]^{\partial_t}&\cdots
}\]
where $\de_t=\de -\de'$ with $\de'=\varpi \wedge - $, and $\partial_t=\partial_{R_t}$ is the Poisson differential with values in $R_t$ which is the twisted Poisson module twisted by the modular derivation  $\phi_{\vol}$.  
By Proposition  \ref{twist dRdiff and part}, $\partial_t \de_t+\de_t \partial_t=0$.
Then by Lemma  \ref{wedge cocycle} and Theorem  \ref{comm diag1},
the operator $\wedge$, $\Delta_t$ can pass through the cohomology differential $\delta$ to its cohomology groups $\PH^*(R)$. Hence the BV algebra $(\mathfrak{X}^*(R),\wedge,\Delta_t)$ induces a BV algebra structure on $\PH^*(R)$.
\end{proof}

\begin{rmk}\label{delta'-and-iota_w}
For any $P\in \mathfrak{X}^{p}(R)$, $\Delta'(P)=(-1)^{p}\iota_{\varpi}P$. In fact,  by the definition of $\Delta'$, the following diagram 
\begin{equation}
\xymatrix{
   \mathfrak{X}^{p}(R) \ar[d]_{\dag_R^{p}}  \ar@{-->}[r]^{\Delta'}
        &   \mathfrak{X}^{p-1}(R)      \\
  \Omega^{n-p}(R)    \ar[r]^{\varpi \wedge -}
        &  \Omega^{n-p+1}(R) \ar[u]_{(\dag_R^{p-1})^{-1}}
     }
\end{equation}
is commutative. Recall that $\dag_R^{p}(P)=(-1)^{\frac{p(p+1)}{2}}\iota_P(\vol)$, and $$(\dag_R^{p-1})^{-1} (\varpi \wedge (\dag_R^{p}(P))) =(-1)^{\frac{(p-1)p}{2}}\iota_{\varpi \wedge (\dag_R^{p}(P))}\vol^*=(-1)^p\iota_{\varpi \wedge (\iota_P(\vol))}\vol^*.$$
For any $\eta \in \Omega^{p-1}(R)$,
\begin{align*}
\iota_{\varpi \wedge (\iota_P(\vol))}\vol^*(\eta)=&\vol^*\big(\eta \wedge \varpi \wedge (\iota_P\vol)\big)\\
=&[\iota_{\iota_P(\vol)}\vol^*](\eta \wedge \varpi)\\
=&P(\eta \wedge \varpi) &\mbox{ (by  Theorem \ref{dual iso})}\\
=&(\iota_{\varpi}P)(\eta),
\end{align*}
i.e., $\Delta'(P)=(-1)^{p}\iota_{\varpi}P$.
\end{rmk}

\begin{prop}\label{main-prop-1}
 For a smooth algebra $R$ with $\Omega^n(R)\cong R$, any BV operator (generating the Schouten-Nijenhuis  bracket) on $\mathfrak{X}^*(R)$  has the form $\Delta-(-1)^{|\,\,|}\iota_{\varpi}$, where $\Delta$ is the operator defined in \eqref{def-of-bv-operator}, $|\,\,|$ denotes the degree of homogeneous elements, and $\varpi\in \Omega^1(R)$ with $\de(\varpi)=0$.
\end{prop}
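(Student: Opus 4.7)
The plan is to start from the observation that if $\Delta_1$ is any BV operator on $\mathfrak{X}^*(R)$ generating the Schouten--Nijenhuis bracket $[-,-]_{\mathbf{SN}}$, then the difference $D := \Delta - \Delta_1$ must be a graded super-derivation of degree $-1$ on the graded commutative algebra $(\mathfrak{X}^*(R), \wedge)$. Indeed, writing out the defining identity \eqref{difference} for both $\Delta$ and $\Delta_1$ on any $P \in \mathfrak{X}^p(R)$ and $Q \in \mathfrak{X}^q(R)$, both expressions equal $(-1)^p[P,Q]_{\mathbf{SN}}$, and subtracting produces exactly the graded Leibniz identity for $D$.

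Next I would use the smoothness of $R$ to classify such super-derivations. By Corollary \ref{smooth duality}, the algebra $(\mathfrak{X}^*(R), \wedge)$ is isomorphic to the exterior algebra $\wedge^*_R\Der(R)$, which is generated in degrees $0$ and $1$. Hence any degree $-1$ super-derivation is uniquely determined by its restriction to $R$ (which must vanish for degree reasons) and to $\Der(R)$. The Leibniz rule forces $D|_{\Der(R)}$ to be $R$-linear, so by the projectivity of $\Omega^1(R)$ and the canonical pairing $\Omega^1(R) \times \Der(R) \to R$, there is a unique $\varpi \in \Omega^1(R)$ with $D(\xi) = -\xi(\varpi) = -\iota_\varpi(\xi)$ for all $\xi \in \Der(R)$.

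I then compare $D$ with the operator $P \mapsto (-1)^{|P|}\iota_\varpi(P)$, which is a super-derivation of degree $-1$ by Proposition \ref{delta' der} together with Remark \ref{delta'-and-iota_w}. Both super-derivations vanish on $R$ and agree on $\Der(R)$ (noting that $(-1)^{|\xi|}\iota_\varpi(\xi) = -\xi(\varpi)$ when $|\xi|=1$), so they must coincide on all of $\mathfrak{X}^*(R)$ by the uniqueness of super-derivation extensions from a generating set. This gives the required form $\Delta_1 = \Delta - (-1)^{|\,\,|}\iota_\varpi$.

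It remains to verify that $\varpi$ is a de Rham 1-cocycle. Since $\Delta_1^2 = 0$ by hypothesis, transporting this equation through the isomorphisms $\dag_R^*$ in diagram \eqref{def of Dt} yields $\de_t^2 = 0$ on $\Omega^*(R)$, where $\de_t := \de - (\varpi \wedge -)$; Lemma \ref{dt square zero} then immediately forces $\de\varpi = 0$. The main technical obstacle I anticipate is carefully tracking the sign conventions of the paper's non-standard contraction operator $\iota_\varpi$ (Definition \ref{ctr4}, which differs from the classical interior product by a sign depending on degree) so as to recover precisely the sign $(-1)^{|\,\,|}$ appearing in the statement, rather than an alternative form involving $-\varpi$.
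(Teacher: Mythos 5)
Your proposal is correct and follows essentially the same route as the paper: the difference of two BV operators generating the same bracket is a degree $-1$ super-derivation, which vanishes on $R$, is $R$-linear on $\der(R)$, hence is $-\iota_\varpi$ there for a unique $\varpi\in\Omega^1(R)$, and then equals $(-1)^{|\,\,|}\iota_\varpi$ everywhere by uniqueness of super-derivations on generators; finally $\Delta_1^2=0$ transports via Poincar\'e duality to $\de_t^2=0$ and Lemma \ref{dt square zero} gives $\de\varpi=0$. Your write-up is in fact slightly more careful than the paper's (which leaves the extension step as ``easy to check''), but the argument is the same.
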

\begin{proof}
By Theorem \ref{main thm-BV}, $\Delta$ is a BV operator on $\mathfrak{X}^*(R)$ generating the Schouten-Nijenhuis  bracket. For any other BV operator $\Delta_t$, by the definition of BV operator \eqref{difference}, $\Delta-\Delta_t$ is a super-derivation.
Then $(\Delta -\Delta_t)\mid_{\mathfrak{X}^1(R)}$ is an $R$-module morphism. Thus $$(\Delta -\Delta_t)\mid_{\mathfrak{X}^1(R)}\in \Hom_{R}(\mathfrak{X}^1(R), R)\cong \Omega^1(R).$$ So there exists $\varpi\in \Omega^1(R)$ such that, for any $F\in \mathfrak{X}^1(R),$
$$(\Delta -\Delta_t)(F)=F(-\varpi)=(-1)\iota_{\varpi}F.$$
Since $\Delta -\Delta_t$ is a super-derivation on $\mathfrak{X}^*(R)\cong \wedge_R^*(\der(R))= \wedge_R^*(\mathfrak{X}^1(R))$ (Corollary \ref{smooth duality}), it is easy to check that $$(\Delta -\Delta_t)(P)=(-1)^p\iota_{\varpi}P$$ for any $P\in \mathfrak{X}^p(R).$ Hence $\Delta_t=\Delta-(-1)^{|\,\,|}\iota_{\varpi}$.

It follows from  the Poincar\'e duality (Remark \ref{delta'-and-iota_w}) that $\Delta_t$ corresponds to a differential $d_t=\de-(\varpi\wedge-)$ on $\Omega^*(R)$. By Lemma \ref{dt square zero}, $d_t^2=0$ if and only if $\de \varpi=0$. Then the proof is finished.
\end{proof}


\begin{cor}\label{main-cor}
Let $R$ be a  smooth Poisson algebra   with trivial canonical bundle. If its Poisson cohomology admits a BV operator which is induced from a BV operator on $\mathfrak{X}^*(R)$, then $R$ is pseudo-unimodular.
\end{cor}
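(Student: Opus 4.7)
The plan is to use Proposition \ref{main-prop-1} to pin down the form of any BV operator on $\mathfrak{X}^*(R)$, and then exploit the BV master identity at degree zero to force the pseudo-unimodularity condition.

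Suppose $\Delta_t$ is a BV operator on $(\mathfrak{X}^*(R),\wedge,[-,-]_{\bf SN})$ which induces a BV operator on $\PH^*(R)$. By Proposition \ref{main-prop-1}, $\Delta_t=\Delta-(-1)^{|\,\,|}\iota_\varpi$ for some $\varpi\in\Omega^1(R)$ with $\de\varpi=0$, where $\Delta$ is the operator from \eqref{def-of-bv-operator}. Applying this to $\pi\in\mathfrak{X}^2(R)$ and invoking Proposition \ref{phi_vol=Delta(pi)}(1), we get
$$\Delta_t(\pi)=\Delta(\pi)-\iota_\varpi\pi=\phi_{\vol}-\iota_\varpi\pi.$$

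The next step is to promote equation \eqref{Detdet+detDet} from $\Delta$ to $\Delta_t$: for every $P\in\mathfrak{X}^*(R)$,
$$(\Delta_t\,\delta+\delta\,\Delta_t)(P)=[\Delta_t(\pi),P]_{\bf SN}.$$
This is purely formal. Apply the defining BV identity \eqref{difference} to the pair $(\pi,P)$ to rewrite $[\pi,P]_{\bf SN}=\delta P$ in terms of wedge products and $\Delta_t$; then apply $\Delta_t$ and use \eqref{difference} once more to expand $\Delta_t(\Delta_t\pi\wedge P)$ and $\Delta_t(\pi\wedge\Delta_tP)$. The Leibniz-type cancellations, together with $\Delta_t^2=0$ and $[\pi,\pi]_{\bf SN}=0$, reduce everything to the displayed identity. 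This is the same derivation underlying \cite[Proposition 5]{LWW1}, only with $\Delta_t$ replacing $\Delta$.

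Finally, I probe the identity at degree zero. For $a\in\mathfrak{X}^0(R)=R$, the convention $\mathfrak{X}^{-1}(R)=0$ forces $\Delta_t(a)=0$, so the master identity collapses to
$$\Delta_t(\delta a)=[\Delta_t(\pi),a]_{\bf SN}=\Delta_t(\pi)(a)=(\phi_{\vol}-\iota_\varpi\pi)(a),$$
where the middle equality uses $[\xi,c]_{\bf SN}=\xi(c)$ for a derivation $\xi$ and a scalar $c$. By hypothesis $\Delta_t$ descends to $\PH^*(R)$, hence it must map coboundaries to coboundaries; but $\Delta_t(\delta a)\in\mathfrak{X}^0(R)$ and $\im\bigl(\delta\colon\mathfrak{X}^{-1}(R)\to\mathfrak{X}^0(R)\bigr)=0$, so $\Delta_t(\delta a)=0$. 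Therefore $(\phi_{\vol}-\iota_\varpi\pi)(a)=0$ for every $a\in R$, i.e.\ $\phi_{\vol}=\iota_\varpi\pi$. Combined with $\de\varpi=0$, this is precisely Definition \ref{def of pseudo-unimodular}, so $R$ is pseudo-unimodular. The main obstacle I anticipate is purely the sign bookkeeping in promoting \eqref{Detdet+detDet} to an arbitrary BV operator $\Delta_t$; once that generalization is in place, the degree count $\mathfrak{X}^{-1}(R)=0$ simultaneously kills $\Delta_t(a)$ and every potential coboundary in degree zero, delivering the conclusion immediately.
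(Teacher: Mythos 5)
Your proposal is correct and follows essentially the same route as the paper: apply Proposition \ref{main-prop-1} to write $\Delta_t=\Delta-(-1)^{|\,\,|}\iota_\varpi$ with $\de\varpi=0$, use the identity $(\Delta_t\delta+\delta\Delta_t)(a)=[\Delta_t(\pi),a]=\Delta_t(\pi)(a)$ in degree zero, and conclude from the fact that $\Delta_t$ descends to cohomology (and that there are no nonzero coboundaries in $\mathfrak{X}^0(R)$) that $\Delta_t(\pi)=0$, hence $\phi_{\vol}=\Delta(\pi)=\iota_\varpi\pi$. The only difference is that you explicitly justify promoting \eqref{Detdet+detDet} from $\Delta$ to $\Delta_t$ as a formal consequence of the BV identity, which the paper does implicitly; this is a harmless elaboration, not a different argument.
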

\begin{proof}
Suppose $\Delta_t$ is a BV operator  on $\mathfrak{X}^*(R)$ which induces a BV operator on the Poisson cohomology. Then, by Proposition \ref{main-prop-1}, $\Delta_t=\Delta-(-1)^{|\,\,| }\iota_{\varpi}$ where $\varpi\in \Omega^1(R)$ with $\de(\varpi)=0$. By  \eqref{Detdet+detDet}, for any $x \in R,$ $$(\Delta_t\delta+\delta\Delta_t)(x)=[\Delta_t(\pi), x],$$
that is, $$\Delta_t\delta(x)=\Delta_t(\pi)( x).$$
Because $\Delta_t$  induces a differential on the Poisson cohomology and $\delta(x)$ is a 1-coboundary,  $\Delta_t(\overline{\delta(x)})=0 \in \PH^0(R)$ for any $x \in R.$
Hence $\Delta_t(\pi)=0$, that is, $\Delta(\pi)-\iota_{\varpi}(\pi)=0$. By Proposition \ref{phi_vol=Delta(pi)} (or Remark \ref{phi=DetPi}), $\phi_{\vol}=\Delta(\pi)=\iota_{\varpi}(\pi)$ and $R$ is pseudo-unimodular.
\end{proof}

\section*{Acknowledgments}
J. Luo is supported by the NSFC (Grant No. 11901396). S.-Q. Wang is supported by the NSFC (Grant No. 11771085). Q.-S. Wu is supported  by the National Key Research and Development Program of China (Grant No. 2020YFA0713200) and NSFC (Grant No. 11771085).

\end{document}